\newtheorem{observation}{Observation}
\newtheorem{theorem}{Theorem}
\newtheorem{lemma}[theorem]{Lemma}
\newtheorem{corollary}[theorem]{Corollary}
\begin{document}

\markboth{Micah W. Chrisman}
{Twist Lattices for Virtual Knots}

\title{Twist Lattices and the Jones-Kauffman Polynomial for Long Virtual Knots}

\author{Micah W. Chrisman}

\maketitle

\begin{abstract}
In this paper, we investigate twist sequences for Kauffman finite-type invariants and Goussarov-Polyak-Viro finite-type invariants.  It is shown that one obtains a Kauffman or GPV type of degree $\le n$ if and only if an invariant is a polynomial of degree $\le n$ on every twist lattice of the right form.  The main result of this paper is an application of this technique to the coefficients of the Jones-Kauffman polynomial.  It is shown that the Kauffman finite-type invariants obtained from these coefficients are not GPV finite-type invariants of any degree by explicitly showing they can never be polynomials.  This generalizes a result of Kauffman \cite{virtkauff}, where it is known for degree $k=2$.
\end{abstract}

\section{Introduction}
\footnote{This is a preprint of an article submitted to the Journal of Knot Theory and Its Ramifications and later accepted for publication in March 2009.}Twist sequences have a long and distinguished history in the study of Vassiliev invariants.  These sequences were first studied by Dean \cite{MR1265449} and Trapp \cite{MR1291867}.  They have typically been used to show that an invariant is not of finite-type.  For example, this method has been used to show that the crossing number, the unknotting number, the coefficients of the Jones polynomial, and the number of knot group representations are not of finite-type (see \cite{MR1291867}, \cite{MR1265449} and \cite{MR1657727}).  In the classical case, an example of a type of twist sequence is given below:
\begin{equation} \label{realtwist}
\begin{array}{cccccc} \cdots & \scalebox{.2}{\psfig{figure=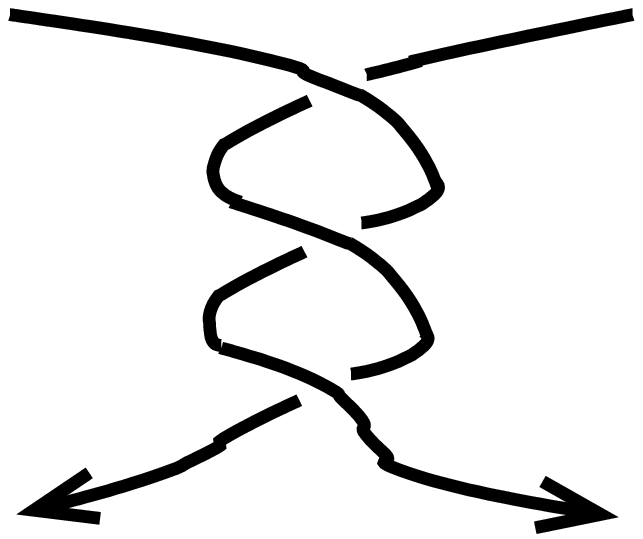}} & \scalebox{.2}{\psfig{figure=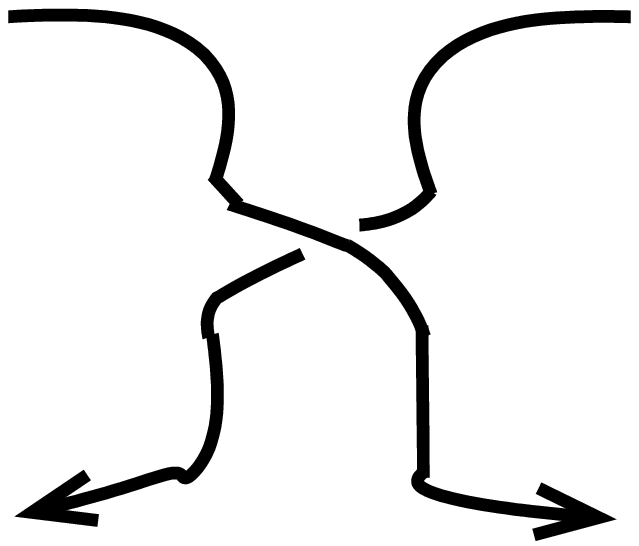}} & \scalebox{.2}{\psfig{figure=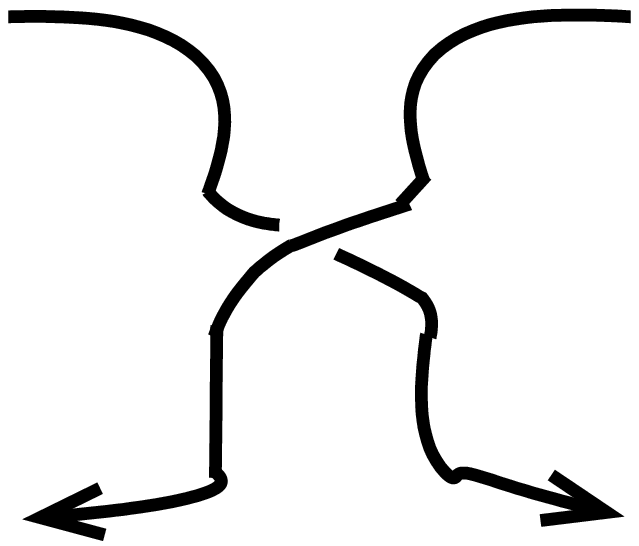}}& \scalebox{.2}{\psfig{figure=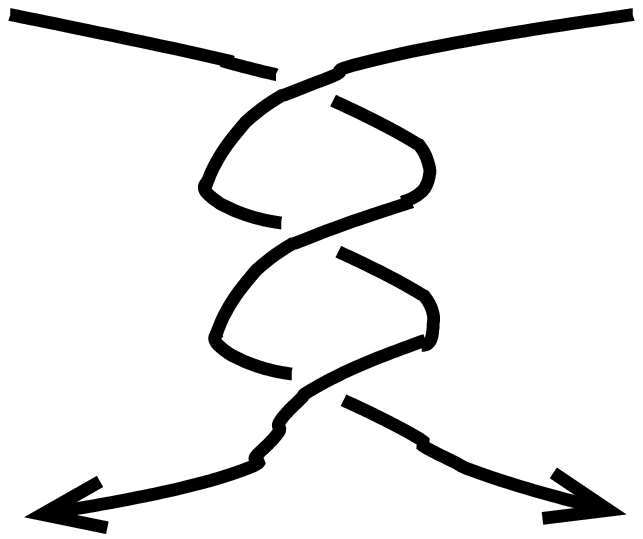}}& \cdots \\ 
 & n=-1 & n=0 & n=1 & n=2 & \\
\end{array}
\end{equation}

In Eisermann \cite{MR1997586}, the idea was generalized to twist lattices $\Phi:\mathbb{Z}^n \to \mathscr{K}$.  It was shown that an invariant is a finite-type invariant of degree $\le n$ if and only if it is a polynomial of degree $\le n$ on every twist lattice. In this paper, twist lattices are generalized, via Gauss diagrams, to long virtual knots. 

Polynomial characterization of the finite-type concept for virtual knots is more complicated. In fact, there are two different flavors of finite-type invariants.  The first is due to Goussarov-Polyak-Viro \cite{MR1763963} and the second is due to Kauffman \cite{virtkauff}.  The GPV notion is purely combinatorial/algebraic and allows for an easily defined universal invariant.  Invariants of Kauffman type include extensions of some classical Vassiliev finite type invariants (like the Jones polynomial). Any GPV finite-type invariant is necessarily a Kauffman finite-type invariant, but it is not yet clear what obstructions there are for a Kauffman finite-type invariant to be GPV.

For each of the two kinds of finite-type, there is a different natural notion of twist lattice. Kauffman finite-type invariants, being closely related to Vassiliev's original definition of finite-type, are associated to twist lattices with only cosmetic alterations from the classical knot case (called regular twist lattices).  The GPV twist lattices are very different from the classical twist lattices (here called fractional twist lattices).  The first result of this paper is to prove the following theorem, which mirrors the theorem of Eisermann:

\begin{theorem} \label{derivthm} An invariant is Kauffman finite-type of degree $\le n$ if and only if it is a polynomial of degree $\le n$ on every regular twist lattice.  An invariant is GPV finite-type of degree $\le n$ if and only if it is a polynomial of degree $\le n$ on every fractional twist lattice.   
\end{theorem}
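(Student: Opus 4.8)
The plan is to prove both equivalences by the same mechanism, reducing each to the classical dictionary between polynomial functions on $\mathbb{Z}^k$ and finite differences, exactly as in Eisermann \cite{MR1997586}. Recall that a function $f\colon\mathbb{Z}^k\to\mathbb{A}$ (with $\mathbb{A}$ any abelian group) is a polynomial of degree $\le n$ (in the sense of total degree) if and only if every iterated partial difference $\Delta_{i_1}\cdots\Delta_{i_{n+1}}f$ of order $n+1$ vanishes identically, where $\Delta_i f(m)=f(m+e_i)-f(m)$; this is a purely combinatorial fact. The bridge between it and finite-type-ness is a \emph{derivative lemma} that I would prove first: for a regular twist lattice $\Phi\colon\mathbb{Z}^k\to\mathscr{K}_{lv}$, the partial difference $\Delta_i(v\circ\Phi)(m)$ equals the value of $v$ on the singular long virtual knot obtained from $\Phi(m)$ by replacing the $i$-th twist site with a double point, via the Kauffman (Vassiliev) skein relation $v(D^{\times})=v(D_+)-v(D_-)$; the analogue for a fractional twist lattice is that $\Delta_i(v\circ\Phi)(m)$ equals $v$ on the Gauss diagram obtained from $\Phi(m)$ by making the $i$-th arrow semivirtual, via $\langle\text{semivirtual}\rangle=\langle\text{arrow present}\rangle-\langle\text{arrow absent}\rangle$. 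These identities are essentially forced by the definitions of the two kinds of twist lattice; checking them is mostly a matter of unwinding conventions, with the one real point being that the local twisting move must be compatible with the (virtual) Reidemeister moves so that $\Phi$ descends to equivalence classes.

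Granting the derivative lemma, the ($\Rightarrow$) direction is immediate in both cases: if $v$ is Kauffman (resp. GPV) finite-type of degree $\le n$, then iterating the derivative lemma $n+1$ times shows that $\Delta_{i_1}\cdots\Delta_{i_{n+1}}(v\circ\Phi)(m)$ is $v$ evaluated on a long virtual knot with $n+1$ double points (resp. a Gauss diagram with $n+1$ semivirtual arrows), which is $0$ by hypothesis; hence $v\circ\Phi$ is a polynomial of degree $\le n$ on every regular (resp. fractional) twist lattice.

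For ($\Leftarrow$), suppose $v\circ\Phi$ is a polynomial of degree $\le n$ on every regular (resp. fractional) twist lattice. Given a singular long virtual knot $D$ with $n+1$ double points (resp. a Gauss diagram with $n+1$ semivirtual arrows), I would construct a twist lattice $\Phi\colon\mathbb{Z}^{n+1}\to\mathscr{K}_{lv}$ in which the $i$-th coordinate twists at the $i$-th singular (resp. semivirtual) site, arranged so that the $2^{n+1}$ vertices of a unit cube in $\mathbb{Z}^{n+1}$ map to the $2^{n+1}$ resolutions of $D$; one then verifies that this $\Phi$ is of the required form and that $v(D)=\Delta_1\cdots\Delta_{n+1}(v\circ\Phi)(m_0)$ for an appropriate base point $m_0$, using the derivative lemma again together with the multilinearity of the Kauffman/GPV skein relations. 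Since $v\circ\Phi$ is polynomial of degree $\le n$, this order-$(n+1)$ difference vanishes, so $v(D)=0$; as $D$ was arbitrary, $v$ is Kauffman (resp. GPV) finite-type of degree $\le n$.

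The main obstacle is the ($\Leftarrow$) construction, and the derivative lemma underlying it: one must show that an \emph{arbitrary} singular long virtual knot (resp. Gauss diagram with semivirtual arrows) can be realized as the top of a twist lattice that genuinely satisfies the axioms for "regular" (resp. "fractional"), and that the twisting operation is well defined modulo the appropriate moves. For the GPV half this forces one to work entirely in the Gauss-diagram category, since the semivirtual resolution has no diagrammatic analogue outside it — which is exactly why fractional twist lattices must differ so sharply from the classical ones. Once realizability and well-definedness are in hand, the remainder is the standard finite-difference bookkeeping.
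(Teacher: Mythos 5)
Your proposal follows essentially the same route as the paper: your ``derivative lemma'' is the paper's Differentiation Lemmas I and II (a forward difference on a regular twist lattice produces a singular crossing via a crossing change plus an RII cancellation, and on a fractional lattice produces a semi-virtual arrow via the relation arrow minus no-arrow), and your realizability construction for the converse is the paper's Integration Lemmas I and II, after which the finite-difference bookkeeping is identical. The only cosmetic divergence is in the GPV converse, where the paper first uses the isomorphism $I$ and the Polyak-algebra universality to reduce to totally dashed diagrams before integrating, whereas you realize a mixed diagram directly by placing each semi-virtual arrow in its own proper pair; both work and amount to the same sign bookkeeping.
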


The second topic of this paper is an investigation of the Jones-Kauffman polynomial, $f_K(A)$ for virtual long knots $K$.  Denote by $v_k:\mathscr{K} \to \mathbb{Q}$ the coefficient of $x^k$ in the power series expansion of $f_K(e^x)$ about $x=0$.  Kauffman has shown that $v_k$ is a Kauffman finite-type invariant of degree $k$.  In this paper, the behavior of these invariants on regular and fractional twist sequences is investigated.

The Jones-Kauffman polynomial has an interesting property in the case of virtual knots.  It is invariant under a ``virtualization move''.  This new move, also discovered by Kauffman, may or may not preserve isotopy class of classical knots.  In this paper, the virtualization move plays a central role.  The proof of the following theorem shows that, to some extent, invariance under the virtualization move is what prohibits the invariants $v_k$ from being of GPV finite-type. In \cite{virtkauff}, it was noted that the following theorem is true in the case $k=2$.

\begin{theorem} \label{jkthm} For all $k \ge 2$, $v_k:\mathscr{K} \to \mathbb{Q}$ is a Kauffman finite type invariant of degree $k$, but is not a GPV finite type invariant of degree $\le n$ for any $n$.  In particular, $v_k$ is not a polynomial on every fractional twist sequence.
\end{theorem}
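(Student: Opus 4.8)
The first assertion is due to Kauffman \cite{virtkauff}: since the Kauffman bracket obeys a skein relation, the coefficient of $x^{k}$ in $f_{K}(e^{x})$ is annihilated by the $(k+1)$-fold alternating sum over Vassiliev resolutions, so $v_{k}$ is Kauffman finite-type of degree $\le k$, and a computation on a standard singular diagram shows the degree is exactly $k$. The heart of the theorem is the negative half, and the plan is to produce a single fractional twist sequence $\{K_{m}\}_{m\in\mathbb{Z}}$ for which $m\mapsto v_{k}(K_{m})$ is not a polynomial at all; by Theorem \ref{derivthm} this simultaneously shows $v_{k}$ is not GPV finite-type of degree $\le n$ for any $n$, and it is precisely the ``in particular'' clause.

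First I would build the sequence. Starting from the Gauss diagram of a virtualized clasp, I insert one fixed block of arrows for each increment of $m$; at the level of diagrams this is a \emph{virtualized} full twist performed $m$ times inside an annular region, the two strands of which are closed up through a virtual crossing into a single long component. By construction $\{K_{m}\}$ is a one-dimensional fractional twist lattice in the sense of the excerpt. The virtual crossing is the crux: for an honest classical twist region the bracket contributions of the region are forced into the linear combination that makes the Jones coefficients Vassiliev, hence polynomial along the twist, whereas closing the region virtually destroys this conspiracy; invariance of $f_{K}$ under the virtualization move is what nonetheless lets us evaluate $f_{K_{m}}$ in closed form.

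Next I would run the Kauffman bracket state sum for $K_{m}$ with all classical crossings pushed into the twist block. Exactly one geometric family of states gains one extra trivial loop per full twist, so one obtains
\[
f_{K_{m}}(A)\;=\;P(A)\;+\;\varepsilon^{m}\,A^{\mu m+\nu}\,\lambda(A)\,\bigl(-A^{2}-A^{-2}\bigr)^{m},
\]
with $P,\lambda\in\mathbb{Z}[A,A^{-1}]$ fixed, $\lambda\not\equiv 0$, $\varepsilon=\pm1$, and $\mu,\nu\in\mathbb{Z}$; since the writhe of $K_{m}$ and the $A$-degree of the distinguished state both grow linearly in $m$, one checks they do not cancel, so $\mu\neq 0$. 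Because each $K_{m}$ is a long knot, $f_{K_{m}}(1)$ is independent of $m$, which forces $\lambda(1)=0$; the same bracket computation shows $\lambda(e^{x})$ vanishes to order exactly $2$ at $x=0$. Substituting $A=e^{x}$ and using $-e^{2x}-e^{-2x}=-2\bigl(1+x^{2}+O(x^{4})\bigr)$, the geometric summand becomes $(-2)^{m}$ times a power series in $x$ whose $x^{j}$-coefficient is a polynomial in $m$; collecting the coefficient of $x^{k}$ gives
\[
v_{k}(K_{m})\;=\;c_{k}\;+\;(-2)^{m}\,q_{k}(m),
\]
where $c_{k}$ is a constant and $q_{k}$ is a polynomial in $m$ of degree $k-2$ (so $q_{k}$ is a nonzero constant when $k=2$). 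In particular $q_{k}\not\equiv 0$ for every $k\ge 2$, hence $v_{k}(K_{m})$ grows like $2^{m}$ and cannot be a polynomial in $m$; this contradicts $v_{k}$ being GPV finite-type and proves the theorem.

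The hard part, I expect, is twofold: verifying that $\{K_{m}\}$ really is presented by a legal fractional twist lattice, so that Theorem \ref{derivthm} applies, and then the explicit (if modest) Kauffman bracket calculation that isolates the $(-A^{2}-A^{-2})^{m}$ summand and certifies that its coefficient is nonzero, vanishes to order $2$ at $A=1$, and is multiplied by a nonconstant power of $A$. This last point is exactly what upgrades Kauffman's $k=2$ remark to all $k\ge 2$; the cleanest route is to retain only the lowest surviving power of $x$ in the geometric summand, where the $m$-dependence is transparent, so that nonvanishing of $q_{k}$ for every $k\ge 2$ reduces to the single inequality $\lambda''(1)\neq 0$, checked on a small value of $m$. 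Everything else is classical bracket calculus applied to explicitly described diagrams; the only genuinely virtual input is the invariance of $f_{K}$ under virtualization.
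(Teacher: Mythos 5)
Your negative half rests on producing a fractional twist sequence $\{K_m\}$ with $v_k(K_m)=c_k+(-2)^m q_k(m)$, $q_k\not\equiv 0$, i.e.\ with values growing exponentially in $m$. This cannot happen, and the obstruction is exactly the machinery you yourself invoke. By Observation \ref{obs2}, virtualization invariance of $f$ lets one reverse any arrow inside the proper pair of a fractional twist sequence, so as far as the Jones--Kauffman polynomial is concerned the even-index and odd-index subsequences of \emph{any} fractional twist sequence are regular twist sequences (types $ESL$ and $OSL$). Since $v_k$ is a Kauffman finite-type invariant of degree $\le k$ (your first paragraph), Theorem \ref{derivthm} forces $m\mapsto v_k(K_m)$ to be a polynomial of degree $\le k$ separately on even and odd $m$; in particular it grows at most polynomially. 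So the deferred ``hard part'' of your bracket computation --- certifying that a lone $(-A^2-A^{-2})^m$ summand survives with a fixed nonzero coefficient $\lambda(A)$ after the writhe normalization and the substitution $A=e^x$ --- is precisely where the argument must break: such a term always recombines (the $d$-powers occur in sums tied to linearly growing $A$-exponents, as in Lemma \ref{specpoly}, where $f_{\Phi(n)}$ is a short Laurent expression with exponents linear in $n$), and no choice of fractional twist sequence can evade the polynomial bound above. The same tension also undercuts the claim that exponential growth ``contradicts $v_k$ being GPV finite-type'': what it would contradict is the already-established Kauffman finite-typeness.

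The genuine difficulty, and the route the paper takes, is the opposite of what you propose: along a fractional twist sequence the values of $v_k$ \emph{are} polynomials of degree $\le k$ on each parity class, so non-polynomiality has to be extracted from the discrepancy between the two interpolating polynomials. Concretely, the paper computes $f_{\Phi(n)}$ in closed form for the specific sequence of Figure \ref{testseq} (Lemma \ref{specpoly}), derives a recursion for the $v_j(\Phi(n))$ (Lemma \ref{inductform}), and shows the even and odd interpolating polynomials share their $n^k$ coefficient, both have vanishing $n^{k-1}$ coefficient, but differ in the $n^{k-2}$ coefficient (Corollary \ref{coeffdata}); after re-indexing via Observation \ref{obs3} and averaging the two polynomials, the alternating-sum formula of Lemma \ref{discderiv} shows the discrete derivatives $(\partial^{\alpha}v_k\circ\bar\Phi)(0)$ are nonzero for all large $\alpha$, so $v_k$ is not a polynomial of degree $\le n$ on this fractional twist sequence for any $n$, and Theorem \ref{derivthm} concludes. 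If you want to salvage your outline, you would need to replace the exponential mechanism with some such comparison of the even and odd subsequence polynomials; as written, the key computation cannot be carried out.
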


This paper is organized as follows.  In the remainder of Section 1, the definitions of virtual long knots and the two kinds of finite-type invariants are reviewed.   Also in Section 1, discrete derivatives and discrete power series are developed in a way well suited for application to Gauss diagrams.  In Section 2, the two types of twist lattice are defined and the proof of Theorem \ref{derivthm} is given.  Section 3 contains the proof of Theorem \ref{jkthm} and its supporting lemmas.

\subsection{Long Virtual Knots, Dictionary of Crossing Types}  A long knot diagram, $\tau:\mathbb{R} \to \mathbb{R}^2$, is an immersion of the line which coincides with the usual embedding of $\mathbb{R}\to \mathbb{R}^2$ outside a compact set. Moreover, we require that every point at which $\tau$ is not injective, there is a specification of a crossing over or under in the usual sense.  

To every long knot diagram $\tau:\mathbb{R} \to \mathbb{R}^2$, there is associated a Gauss diagram (see \cite{MR2068425} or \cite{MR1763963}). On $\mathbb{R}$, we mark all points at which $\tau$ is not injective. Suppose $x,y \in \mathbb{R}$ and $x<y$.  If $\tau(x)=\tau(y)$, we connect $x$ and $y$ by an arc. If traveling along the knot from $-\infty \to \infty$ the knot overcrosses at $x$, we embellish the arc between $x$ and $y$ with an arrowhead pointing right.  If the the overcrossing occurs at $y$, we draw an arrowhead pointing left. 

Recall that a crossing is said to be right-handed if it locally resembles the diagram in Equation 1, $n=1$ and left-handed if it locally resembles the diagram in Equation 1, $n=0$. Each arc on the Gauss diagram of a long knot is given a sign of $+$ if the corresponding crossing is right-handed or a $-$ if the corresponding crossing is left-handed.  As an example, the diagram in Figure \ref{testseq} for $n=3$ is a Gauss diagram of the long right-handed trefoil knot. Gauss diagrams are considered equivalent up to orientation preserving homeomorphism of $\mathbb{R}$ that maps arcs to arcs while preserving both direction and sign.
\begin{figure}[b]
\[
\begin{array}{ccc} \xymatrix{ \scalebox{.15}{\psfig{figure=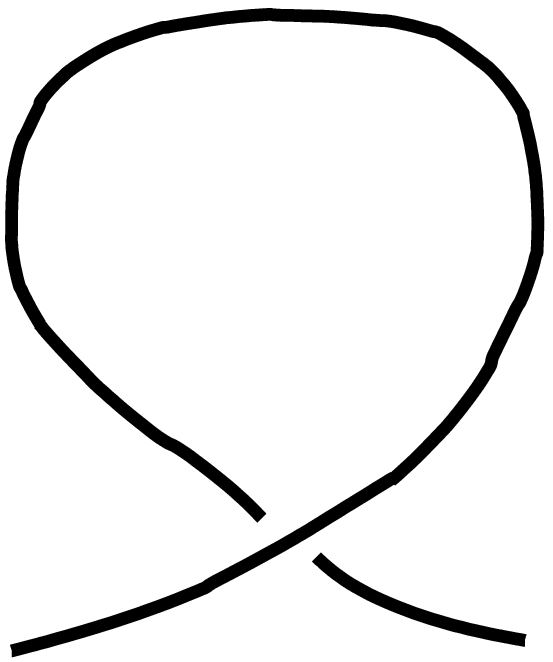}} \ar[r]^{\text{RI}} & \ar[l] \scalebox{.15}{\psfig{figure=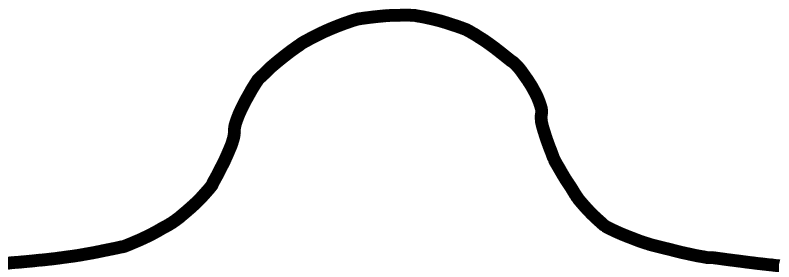}}}& \xymatrix{ \scalebox{.15}{\psfig{figure=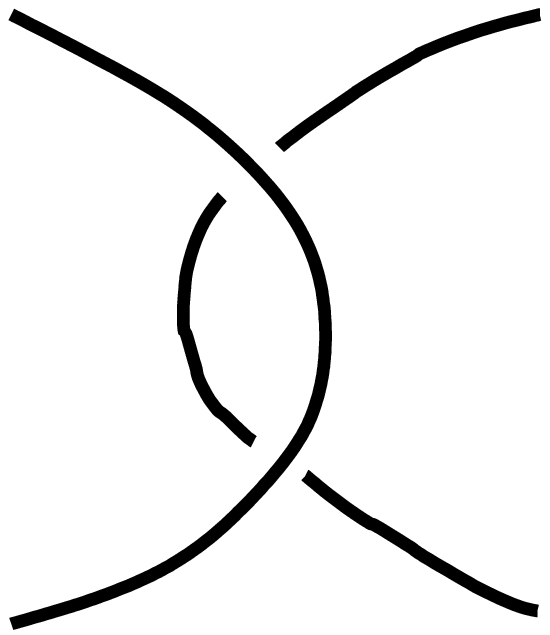}} \ar[r]^{\text{RII}} & \ar[l] \scalebox{.15}{\psfig{figure=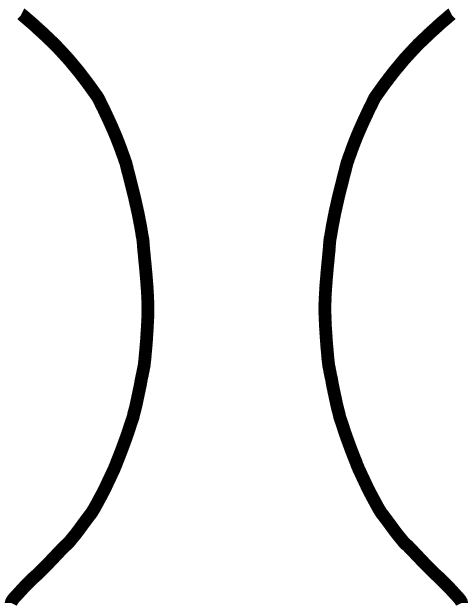}}} & \xymatrix{ \scalebox{.15}{\psfig{figure=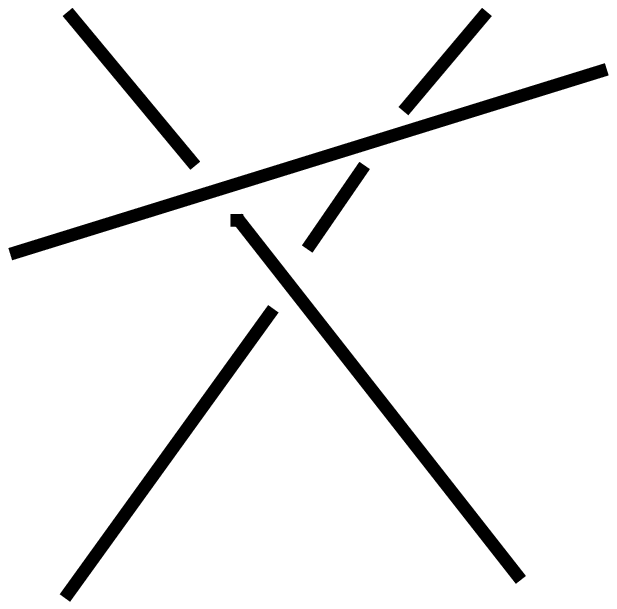}} \ar[r]^{\text{RIII}} & \ar[l] \scalebox{.15}{\psfig{figure=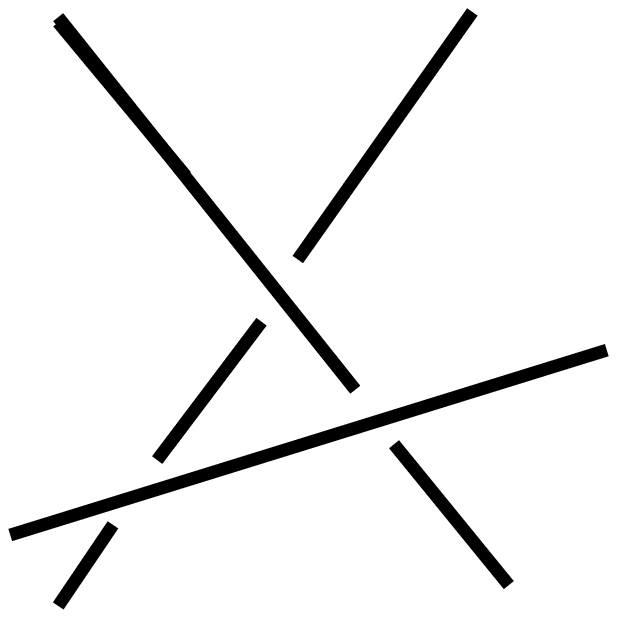}}} \end{array}
\]
\[
\begin{array}{ccc} \xymatrix{ \scalebox{.15}{\psfig{figure=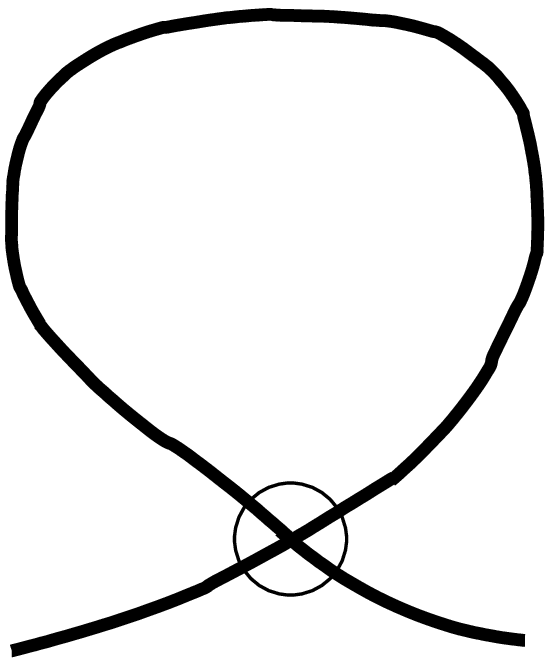}} \ar[r]^{\text{VrI}} & \ar[l] \scalebox{.15}{\psfig{figure=riiright.eps}}}& \xymatrix{ \scalebox{.15}{\psfig{figure=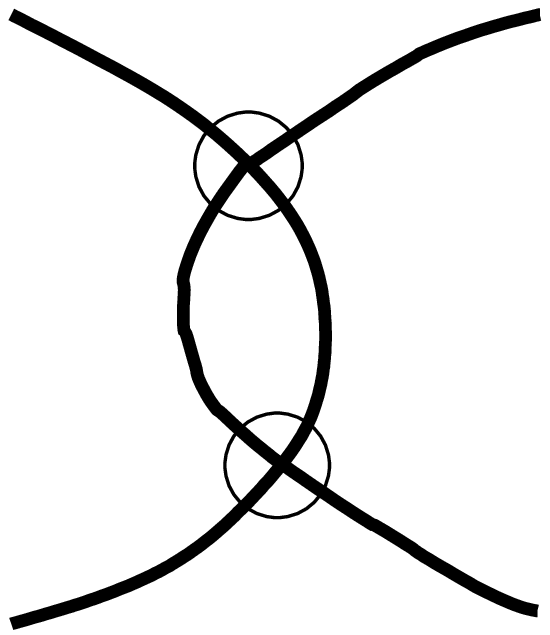}} \ar[r]^{\text{VrII}} & \ar[l] \scalebox{.15}{\psfig{figure=r2right.eps}}} & \xymatrix{ \scalebox{.15}{\psfig{figure=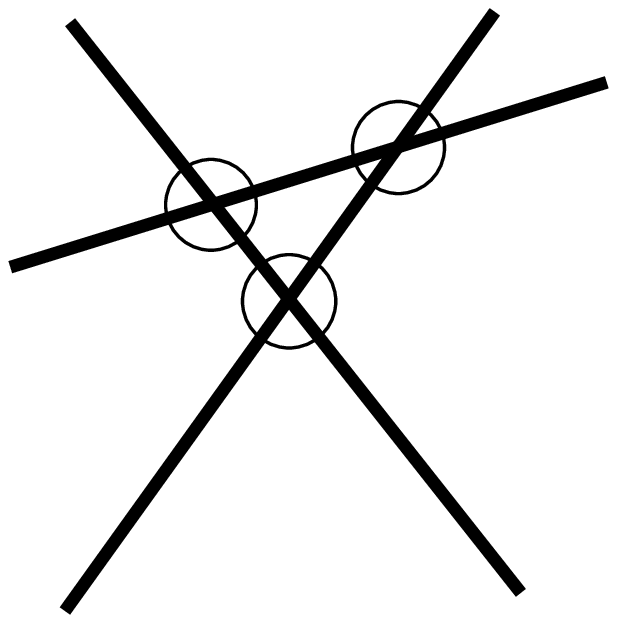}} \ar[r]^{\text{VrIII}} & \ar[l] \scalebox{.15}{\psfig{figure=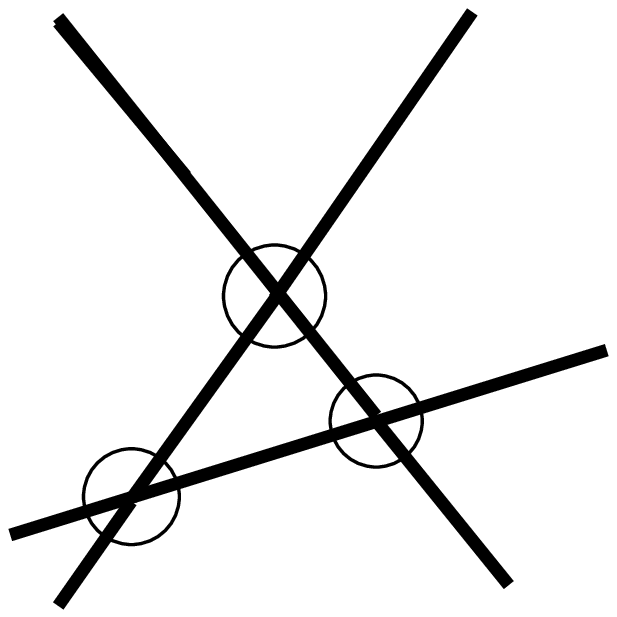}}} \end{array}
\]
\[
\xymatrix{ \scalebox{.15}{\psfig{figure=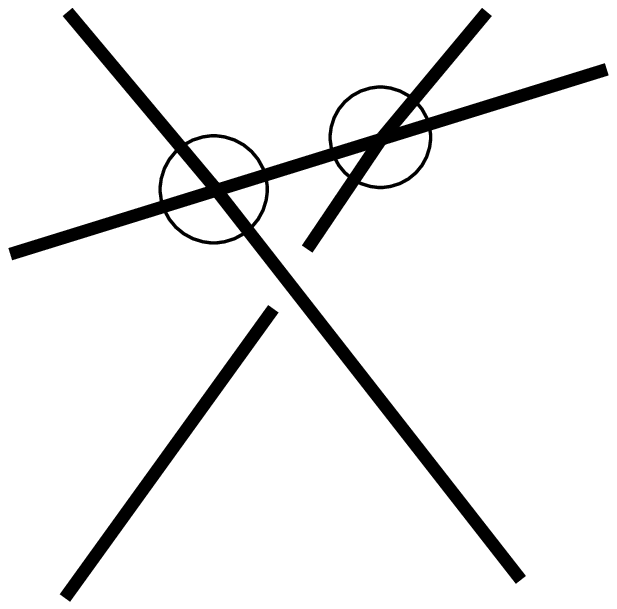}} \ar[r]^{\text{VrIV}} & \ar[l] \scalebox{.15}{\psfig{figure=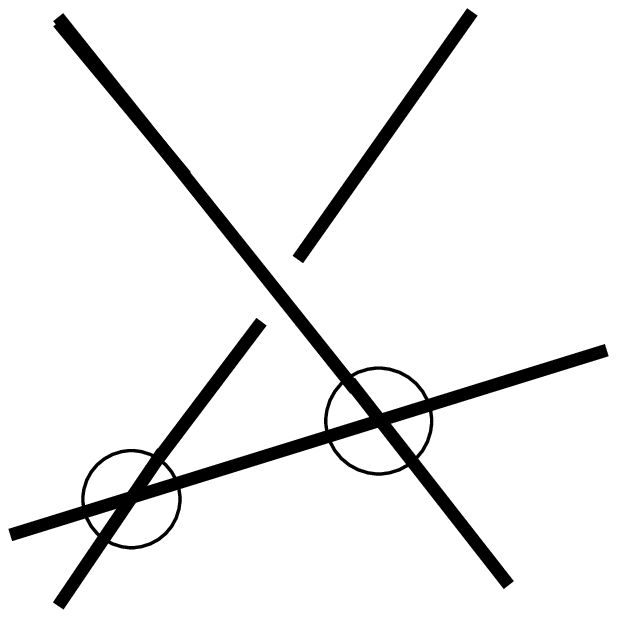}}}
\]
\caption{The Reidemiester and Virtual Moves} \label{virtmoves}
\end{figure}

A virtual long knot diagram is defined similarly to a long knot diagram.  The difference is that at each point at which the immersion is not injective there are three kinds of embellishments allowed: overcrossing, under crossing, or a four valent graphical vertex.  The graphical vertex is denoted with a small circle surrounding the transversal intersection of two arcs.  Long virtual knots are equivalent up to the three Reidemeister moves and four additional moves.  They are shown in Figure \ref{virtmoves}.

The Gauss diagram of a long virtual knot is obtained by including all the classical crossings in the usual way and then ignoring the virtual crossings.  It is known that every Gauss diagram determines the unique equivalence class of a virtual long knot. The set of equivalence classes of long virtual knots will be denoted $\mathscr{K}$ and the set of equivalence classes of Gauss diagrams will be denoted $\mathscr{D}$.

In addition to the classical and virtual crossings, there are also singular crossings and semi-virtual crossings. Each is associated to a specific filtration of $\mathbb{Z}[\mathscr{K}]$.  The virtual knot diagram notation, Gauss diagram notation, and defining relations are summarized in Table \ref{crossdict}. Note that the orientations on $\mathbb{R}$ and $\mathbb{R}^2$ allow for a crossing of any type to have a local orientation.  These are denoted in the Gauss diagram by a $+$ or $-$ sign. They are typically placed on $\mathbb{R}$ at an arrow endpoint or somewhere just above the arrow itself. In equations, a variable crossing sign is typically denoted with a $\varepsilon$.
\begin{figure}[t] 
\[
\begin{tabular}{|c|c|c|} \hline
& Virtual Knot Diagram & Gauss Diagram \\ \hline
Semi-virtual & $ \begin{array}{c} \scalebox{.12}{\psfig{figure=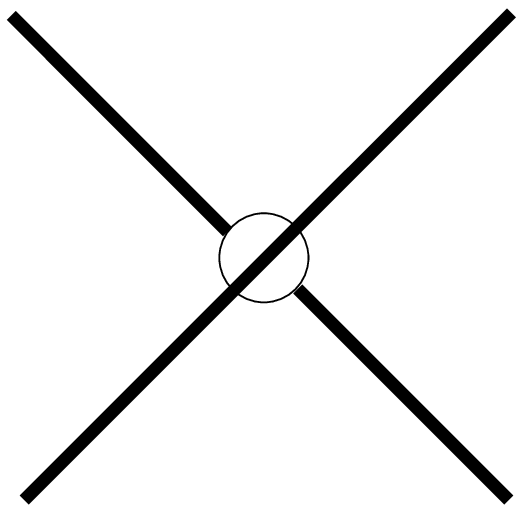}} \\ \end{array} = \begin{array}{c} \scalebox{.12}{\psfig{figure=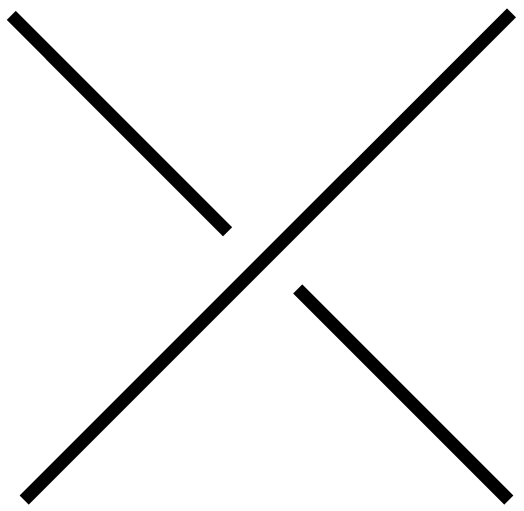}} \\ \end{array}-\begin{array}{c} \scalebox{.12}{\psfig{figure=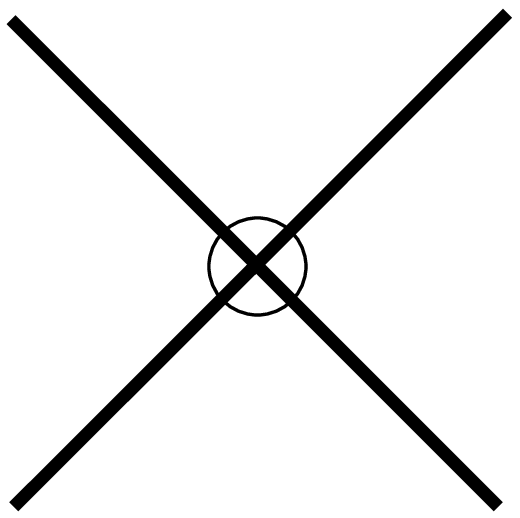}} \\\end{array}
$ & $
\begin{array}{c} \scalebox{.12}{\psfig{figure=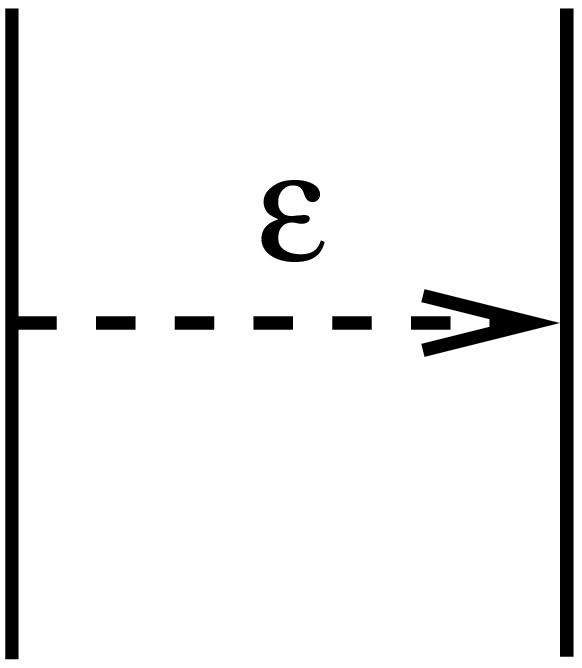}} \\ \end{array} =  \begin{array}{c} \scalebox{.12}{\psfig{figure=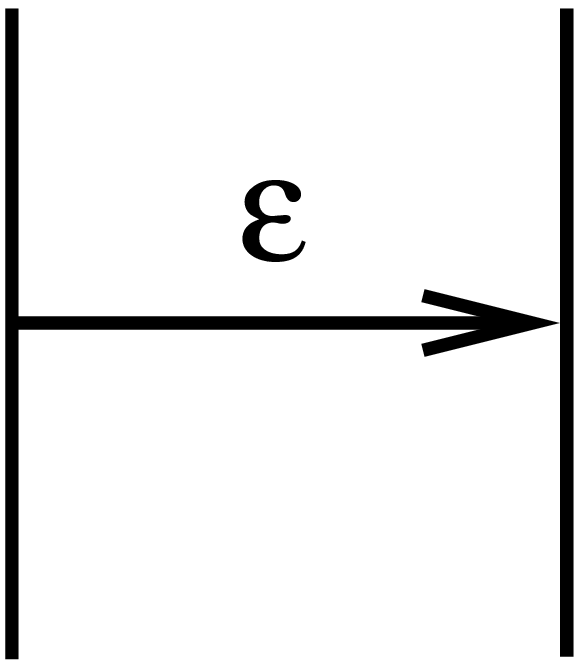}} \\ \end{array}-\begin{array}{c} \scalebox{.12}{\psfig{figure=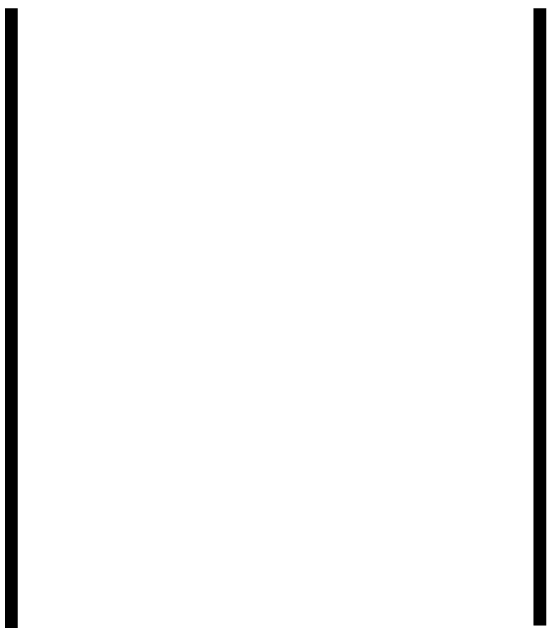}} \\\end{array}
$ \\ \hline
Singular & $\begin{array}{c} \scalebox{.12}{\psfig{figure=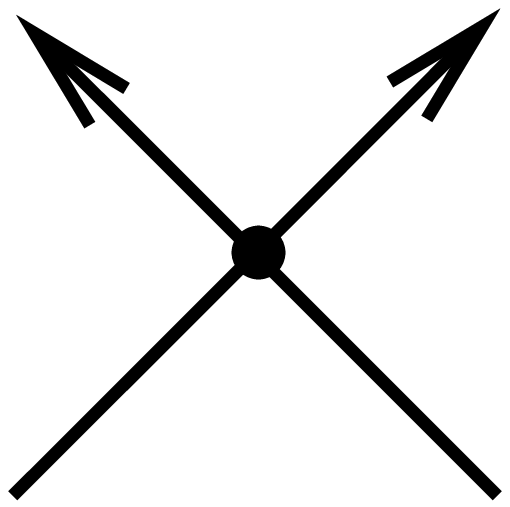}} \\ \end{array} = \begin{array}{c} \scalebox{.12}{\psfig{figure=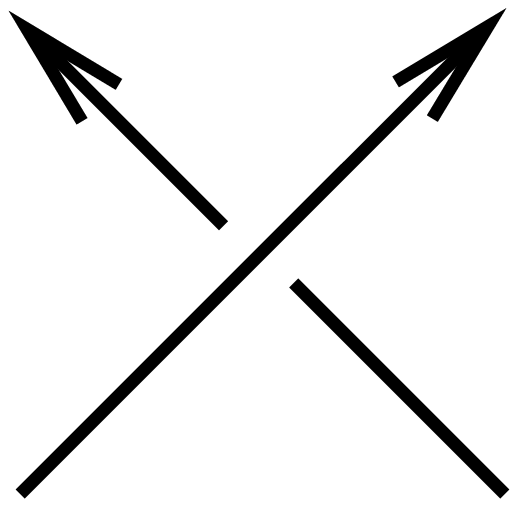}} \\ \end{array}-\begin{array}{c} \scalebox{.12}{\psfig{figure=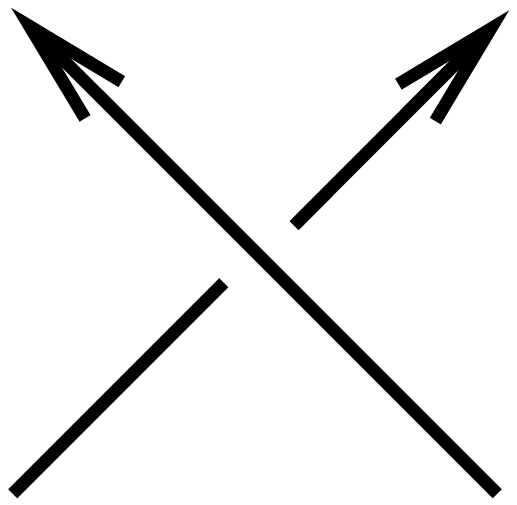}} \\\end{array} $ & $ \begin{array}{c} \scalebox{.12}{\psfig{figure=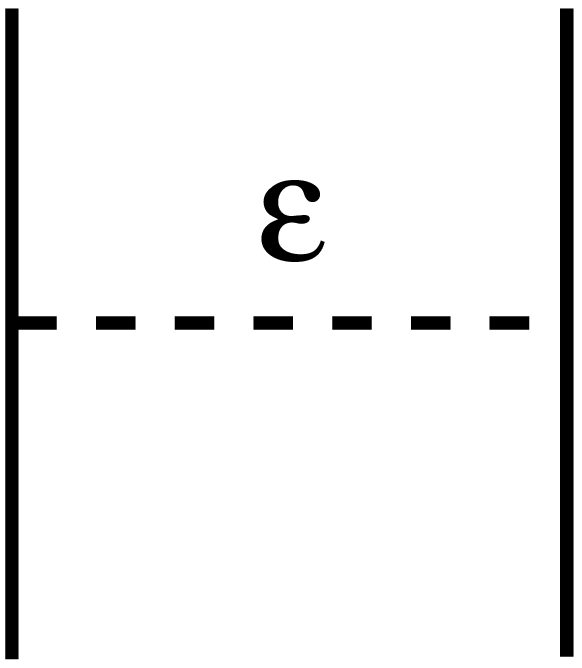}} \\ \end{array} = \varepsilon \cdot \begin{array}{c} \scalebox{.12}{\psfig{figure=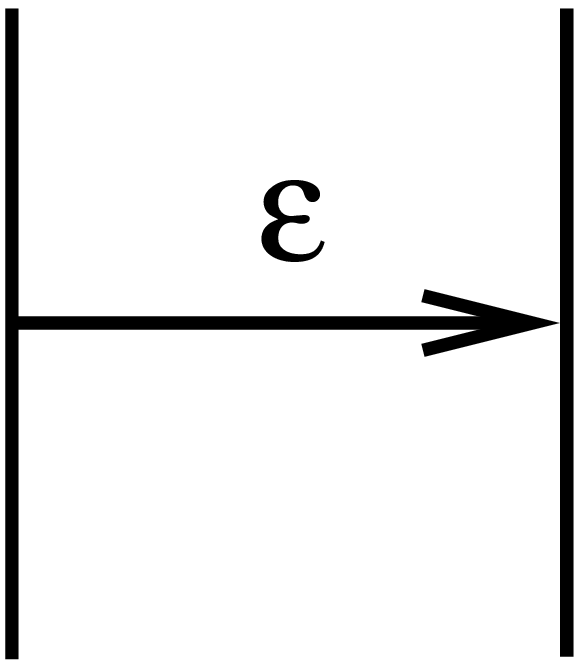}} \\ \end{array}-\varepsilon \cdot \begin{array}{c} \scalebox{.12}{\psfig{figure=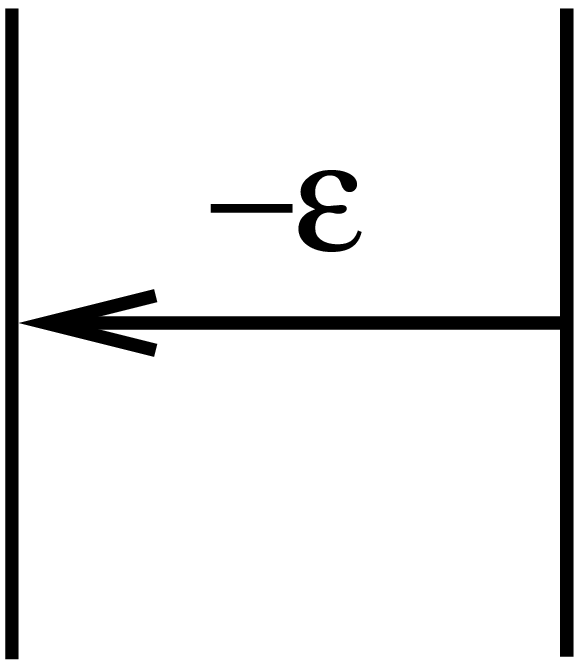}} \\\end{array}
$ \\ \hline
\end{tabular}
\]
\caption{Dictionary of Crossing Types} \label{crossdict}
\end{figure}
The two types of crossings are related by the following local equation:
\begin{equation}\label{virtandsing}
\begin{array}{c} \scalebox{.15}{\psfig{figure=doublepoint.eps}} \\ \end{array} = \begin{array}{c} \scalebox{.15}{\psfig{figure=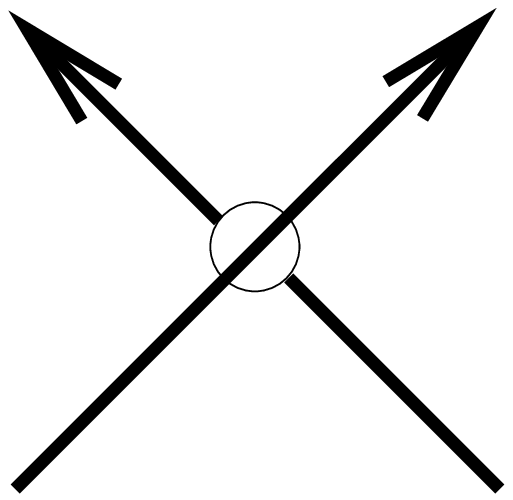}} \\ \end{array}-\begin{array}{c} \scalebox{.15}{\psfig{figure=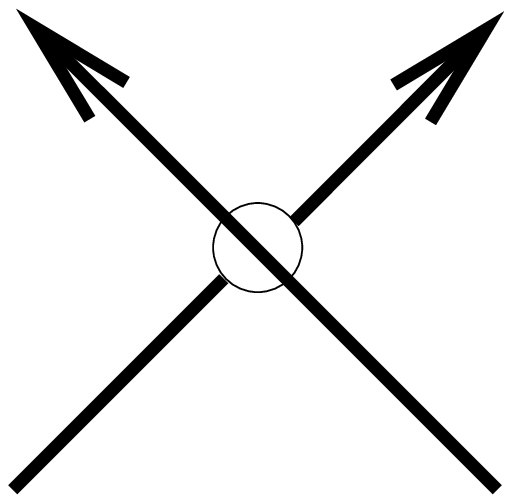}} \\\end{array}
\end{equation}
\subsection{Two Flavors of Finite Type} Let $v:\mathscr{K} \to \mathbb{Q}$ be a virtual knot invariant.  It can be extended to $\mathbb{Z}[\mathscr{K}]$ by linearity. Moreover, the relations in Table \ref{crossdict} allow us to extend any such invariant uniquely to diagrams with semi-virtual and singular crossings (see \cite{MR1763963}).  For classical knots, a Vassiliev (or finite-type) invariant of degree $\le n$ is an invariant  which vanishes after $n+1$ or more extensions to diagrams with singular crossings.  

For virtual knots, there are two notions of finite-type invariant.  A Goussarov-Polyak-Viro (GPV) finite-type invariant of degree $\le n$ is one which vanishes on any element of $\mathbb{Z}[\mathscr{K}]$ represented with $n+1$ or more semi-virtual crossings (see \cite{MR1763963}).  A Kauffman finite-type invariant of degree $\le n$ is an invariant preserving rigid vertex isotopy that vanishes on any diagram with $n+1$ or more singular crossings (see \cite{virtkauff}).  Equation \ref{virtandsing} shows that any GPV finite-type invariant of degree $\le n$ is a Kauffman finite-type invariant of degree $ \le n$.

GPV finite-type invariants admit a purely algebraic characterization. Let $\mathscr{A}$ denote the free abelian group generated by Gauss diagrams containing only dashed arrows.  The Polyak algebra, denoted $\mathscr{P}$, is the quotient of $\mathscr{A}$ by the submodule $\mathscr{R}$ generated by the three classes of localized relations shown in Figure \ref{polyakrels}.
\begin{figure}  
\[
\begin{array}{c}\scalebox{.15}{\psfig{figure=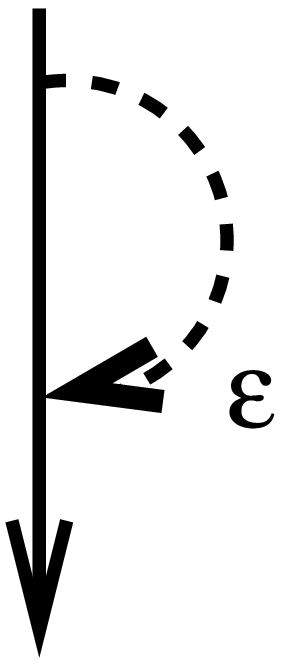}} \end{array} =0, \begin{array}{c}\scalebox{.15}{\psfig{figure=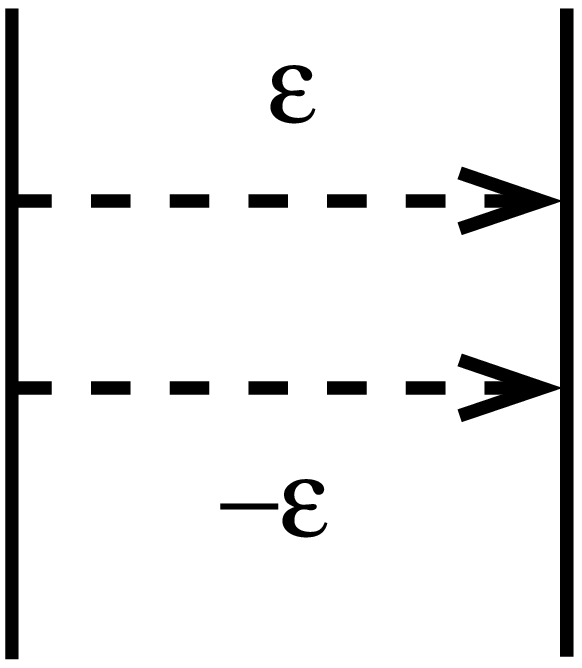}} \end{array}+\begin{array}{c}\scalebox{.15}{\psfig{figure=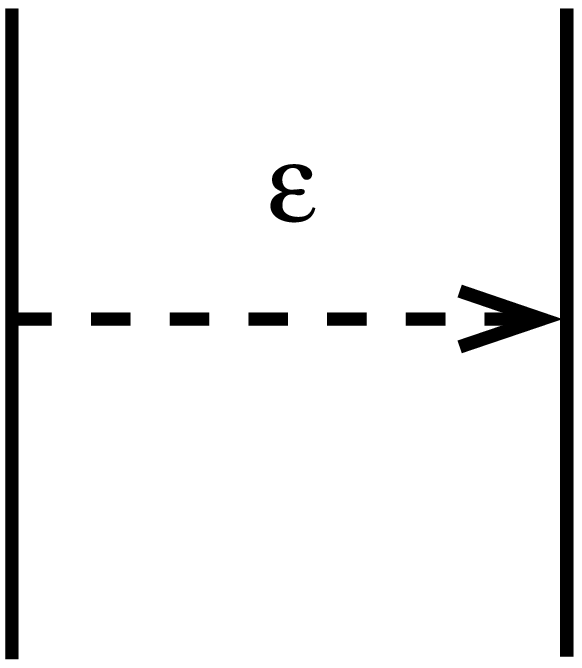}} \end{array}+\begin{array}{c}\scalebox{.15}{\psfig{figure=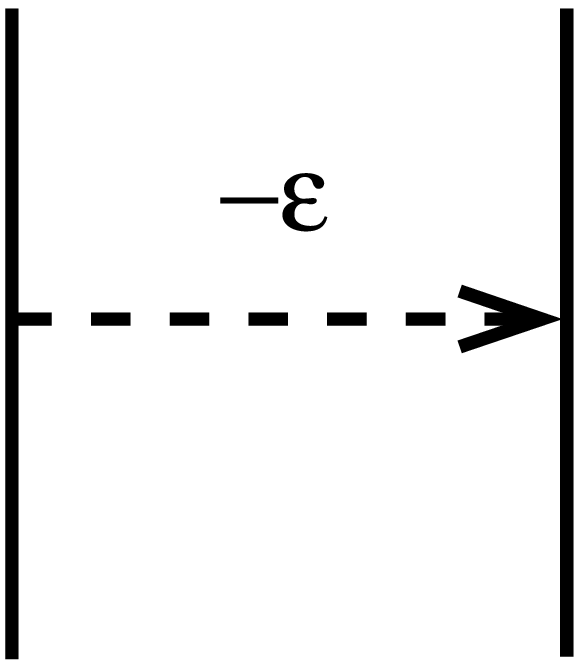}} \end{array}=0,
\]
\begin{eqnarray*}
\begin{array}{c}\scalebox{.15}{\psfig{figure=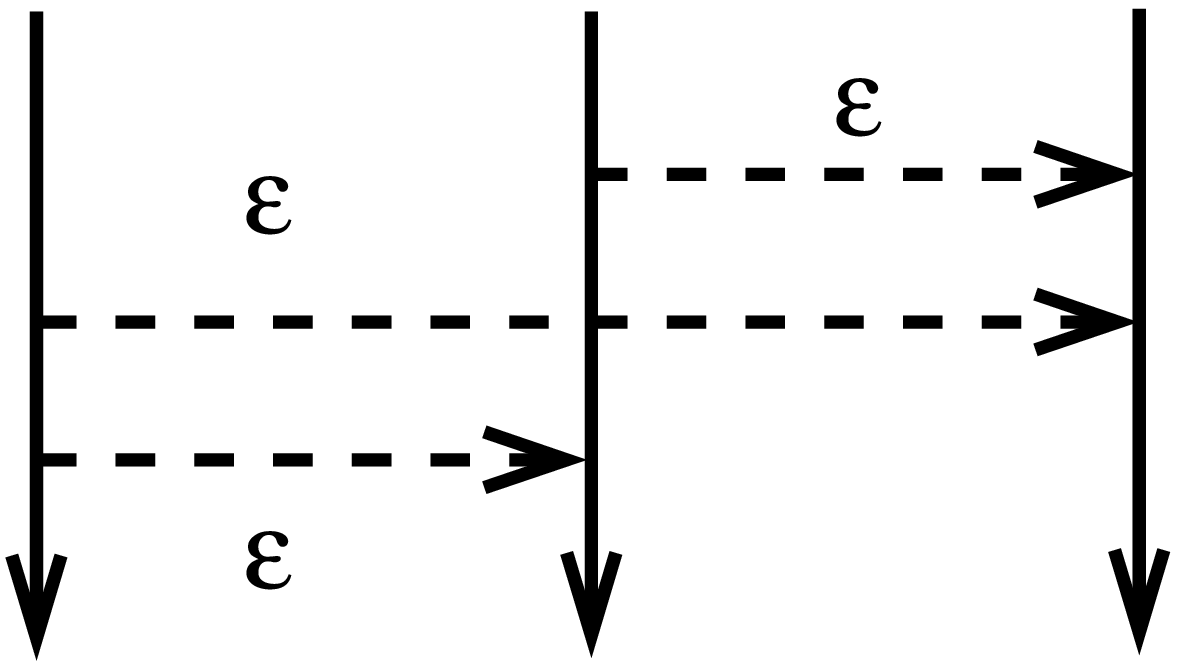}} \end{array}+\begin{array}{c}\scalebox{.15}{\psfig{figure=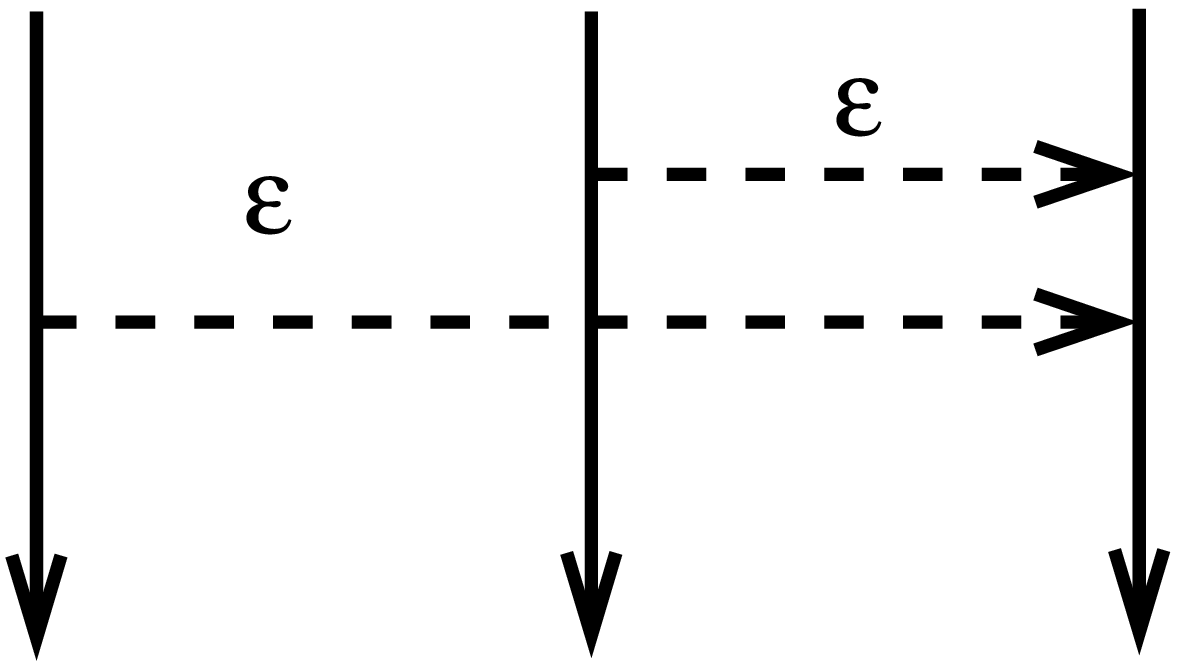}} \end{array}+\begin{array}{c}\scalebox{.15}{\psfig{figure=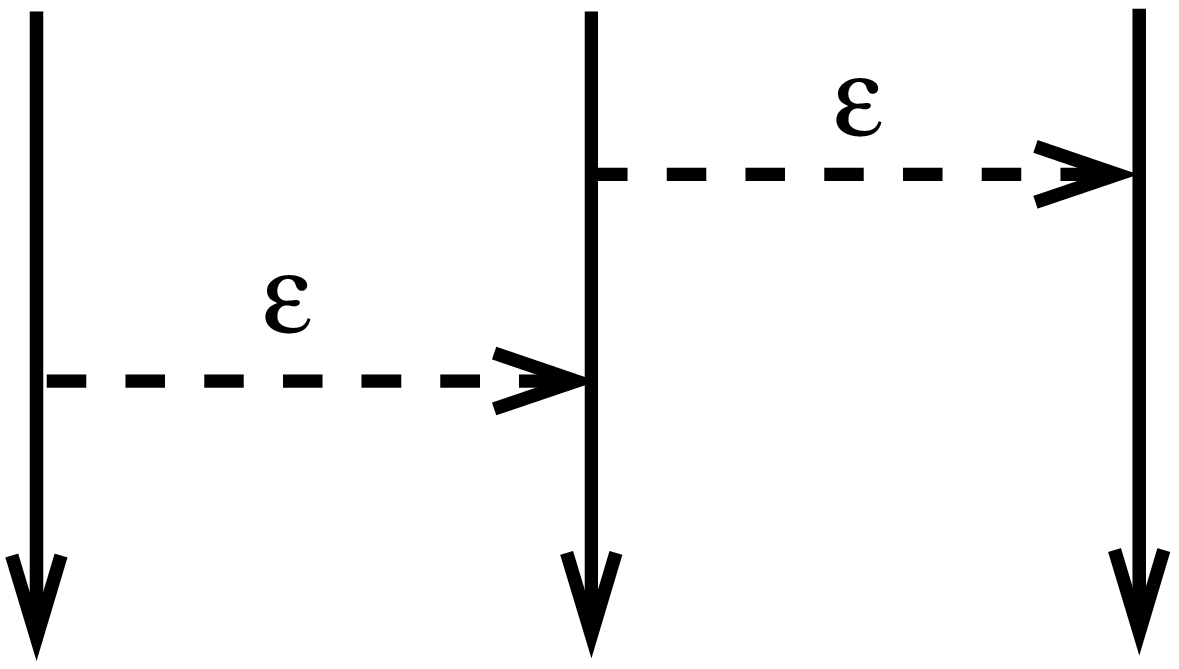}} \end{array}+\begin{array}{c}\scalebox{.15}{\psfig{figure=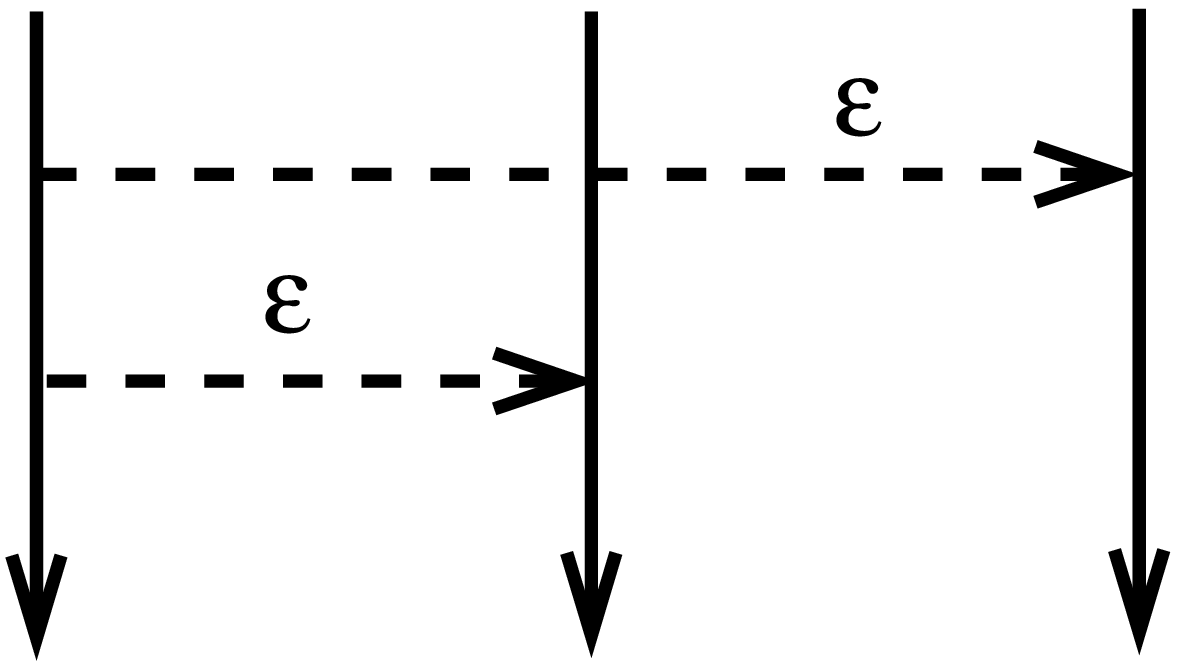}} \end{array} &=& \\ \begin{array}{c}\scalebox{.15}{\psfig{figure=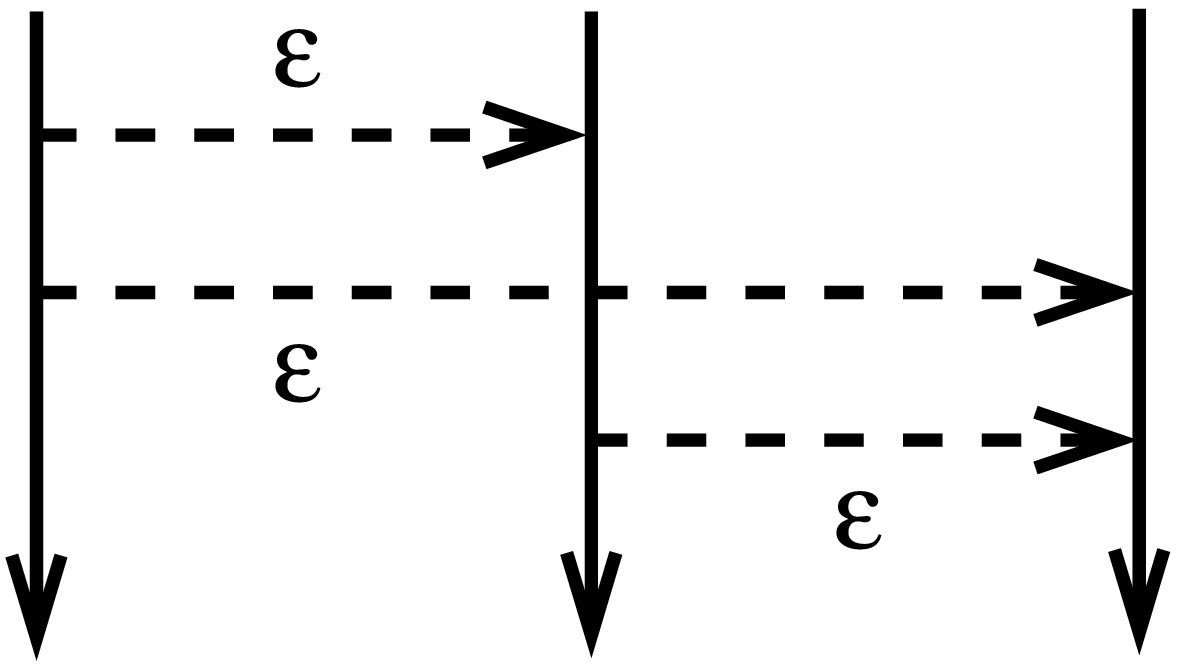}} \end{array}+\begin{array}{c}\scalebox{.15}{\psfig{figure=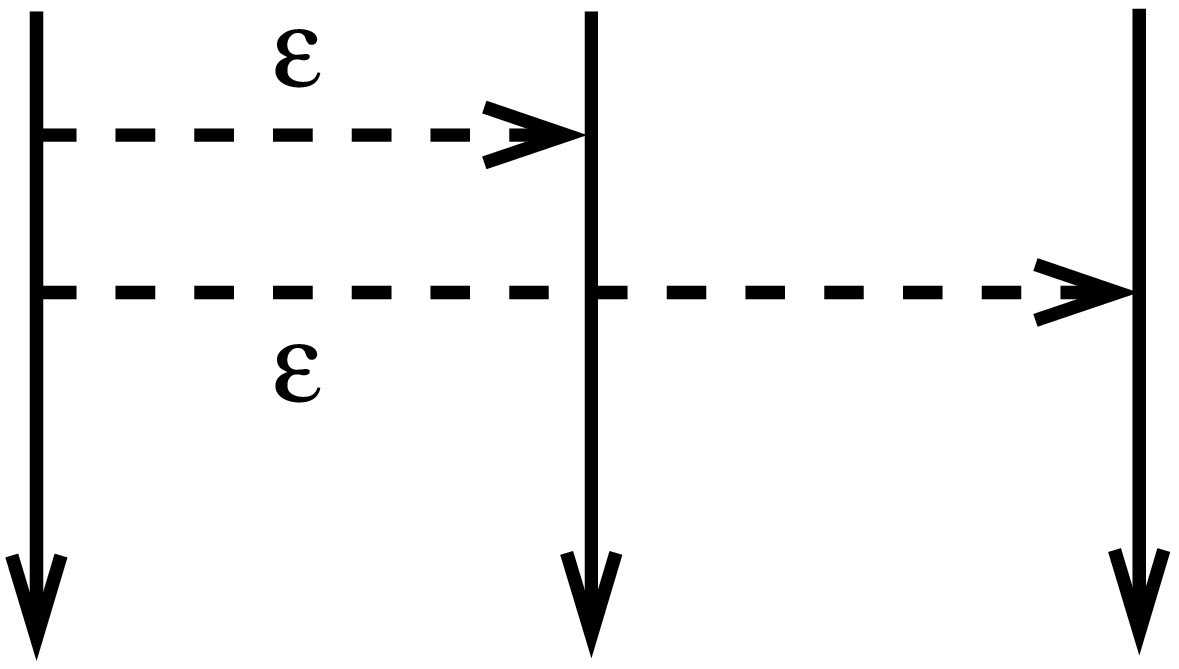}} \end{array}+\begin{array}{c}\scalebox{.15}{\psfig{figure=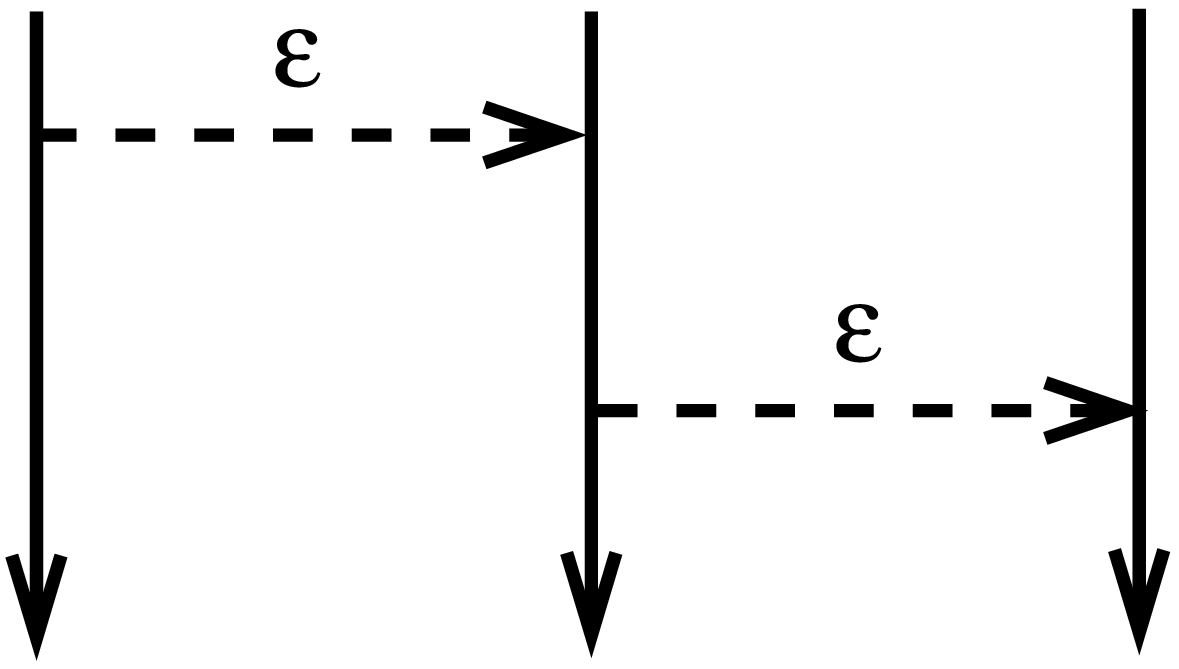}} \end{array}+\begin{array}{c}\scalebox{.15}{\psfig{figure=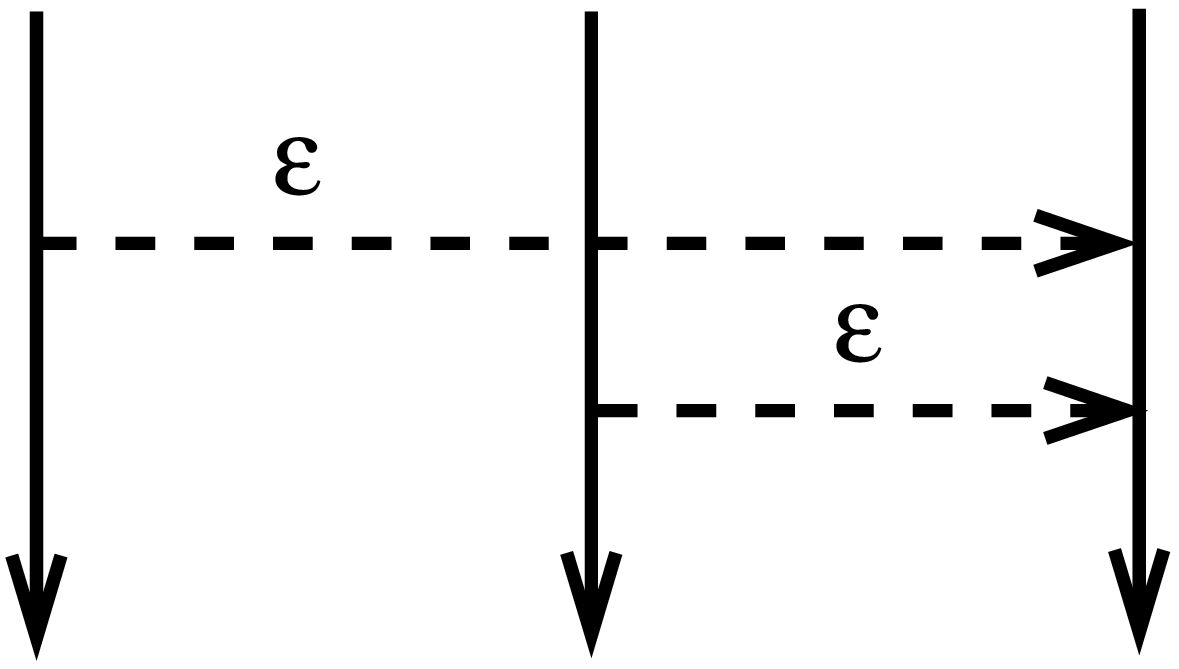}} \end{array} & & \\
\end{eqnarray*}
\caption{Polyak Relations} \label{polyakrels}
\end{figure}

The universal knot invariant for GPV finite-type invariants is surprisingly pleasant to define. It plays a crucial role in the polynomial characterization of these invariants. Let $i:\mathscr{D} \to \mathscr{A}$ to be the map which makes dashed every arrow of a Gauss diagram.  Define $I:\mathbb{Z}[D]\to \mathscr{A}$ by the equation:
\[
I(D)=\sum_{D' \subset D} i(D')
\]
The sum is taken over all Gauss diagrams $D'$ obtained from $D$ by deleting a subset of the arrows of $D$.  
\begin{theorem}[Goussarov, Polyak, Viro, \cite{MR1763963}] \label{iisom} The map $I:\mathbb{Z}[\mathscr{D}] \to \mathscr{A}$ is an isomorphism.  The inverse can be defined explicitly:
\[
I^{-1}(A)=\sum_{A'\subset A} (-1)^{|A-A'|} i^{-1}(A)
\]
Here, $|A- A'|$ means the number of arrows in $A$ that are not in $A'$.  Furthermore, if $D \in \mathbb{Z}[\mathscr{D}]$ has dashed arrows, then every element in the sum defining $I(D)$ also has every dashed arrow of $D$.  Finally, the map extends to an isomorphism of the quotient algebras $I:\mathbb{Z}[\mathscr{K}] \to \mathscr{P}$.
\end{theorem}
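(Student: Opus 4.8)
The plan is to treat $I$ as a change of basis between two free abelian groups that share the same index set. An abstract Gauss diagram is the same combinatorial object whether we draw its arrows solid or dashed, so $\mathbb{Z}[\mathscr{D}]$ and $\mathscr{A}$ are both free on the set of Gauss diagrams, and $i$ is the tautological identification of basis elements. I would order this basis by the partial order ``$D' \le D$ if $D'$ is obtained from $D$ by deleting arrows,'' refined to the filtration by number of arrows. By definition $I(D) = i(D) + (\text{a sum of terms } i(D') \text{ with strictly fewer arrows})$, so with respect to this filtration $I$ is unipotent upper triangular; hence it is an isomorphism. This already settles bijectivity, and it remains only to identify the inverse.

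To check that the displayed formula is $I^{-1}$, I would compute $I^{-1}\!\circ I$ directly on a basis diagram $D$. Expanding and using linearity, $I^{-1}(I(D)) = \sum_{D'' \subseteq D' \subseteq D} (-1)^{|D'-D''|}\, D''$, and for a fixed $D''$ the coefficient of $D''$ is $\sum_{D'' \subseteq D' \subseteq D} (-1)^{|D'-D''|} = \sum_{S \subseteq D \setminus D''} (-1)^{|S|}$, an alternating sum over a Boolean lattice, which is $0$ unless $D \setminus D'' = \varnothing$ and is $1$ in that case. Thus $I^{-1}(I(D)) = D$, and the symmetric computation gives $I(I^{-1}(A)) = A$; this is just Möbius inversion on the lattice of subsets of arrows.

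The ``furthermore'' clauses come out of the same bookkeeping once $I$ is extended to mixed diagrams by the convention that $\sum_{D' \subseteq D} i(D')$ deletes only \emph{solid} arrows and always keeps the dashed ones. The triangularity argument is unchanged, so the extended $I$ is still an isomorphism, and by construction every term of $I(D)$ retains all the dashed arrows of $D$; the analogous statement for $I^{-1}$ is immediate from its explicit form.

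Finally, for the descent to the quotient algebras I would use that $\mathbb{Z}[\mathscr{K}]$ is $\mathbb{Z}[\mathscr{D}]$ modulo the subgroup $\mathscr{R}'$ generated by differences $D_1 - D_2$ of Gauss diagrams related by a single Reidemeister or virtual move (here the fact that a Gauss diagram determines a unique virtual long knot is what identifies the two quotients). The key point is that $I$ is \emph{local}: if $D_1$ and $D_2$ differ only inside a disc, then $I(D_1) - I(D_2)$ splits as a sum, indexed by the subsets of the arrows lying outside that disc, of copies of the ``small'' relation obtained by applying $I$ to the two local pictures. A finite case check shows these small relations are precisely the generators of $\mathscr{R}$ displayed in Figure \ref{polyakrels} — this is essentially how the Polyak relations were constructed — so $I(\mathscr{R}') \subseteq \mathscr{R}$, and the same argument run with $I^{-1}$ gives $I^{-1}(\mathscr{R}) \subseteq \mathscr{R}'$. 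Compatibility of $I$ with the connected-sum product is a routine direct check. Hence $I$ and $I^{-1}$ induce mutually inverse isomorphisms $\mathbb{Z}[\mathscr{K}] \to \mathscr{P}$. I expect this last step — confirming that the localized relations of Figure \ref{polyakrels} account for \emph{all} images of the Reidemeister and virtual moves, and justifying that imposing them in localized form suffices — to be the main technical obstacle; the isomorphism statement itself is essentially formal.
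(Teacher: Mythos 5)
This is a quoted result: the paper gives no proof of Theorem \ref{iisom}, citing it directly from Goussarov--Polyak--Viro \cite{MR1763963}, so there is no in-paper argument to compare against. Your reconstruction is essentially the standard GPV argument and is sound in outline: $I$ is unipotent with respect to the filtration by number of arrows (each diagram has only finitely many subdiagrams, so both $I$ and the alternating-sum formula are well defined), and the verification that the displayed formula inverts $I$ is exactly the M\"obius/inclusion--exclusion computation over the Boolean lattice of arrow subsets that you carry out; note in passing that the paper's displayed formula has a typo ($i^{-1}(A)$ should read $i^{-1}(A')$), which your computation silently corrects. Two points deserve tightening. First, the ``furthermore'' clause is not a convention you are free to impose: a diagram with dashed arrows is already an element of $\mathbb{Z}[\mathscr{D}]$ via the relation dashed $=$ solid $-$ deleted (Table \ref{crossdict}), so the claim is a statement to be verified --- apply $I$ to both resolutions and observe that every term of $I(\text{solid})$ omitting the arrow cancels against the corresponding term of $I(\text{deleted})$, leaving only terms containing that arrow; this is a one-line cancellation, but it should be a computation, not a definition. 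Second, in the descent to the quotients, the virtual moves VrI--VrIV leave the Gauss diagram unchanged and hence contribute no relations at all; the subgroup $\mathscr{R}'$ is generated only by classical RI--RIII differences, and the locality decomposition you describe (summing over subsets of the arrows not involved in the move) is correct precisely because the strands participating in a Reidemeister move carry no other crossings. The remaining content --- checking that the localized images of RI, RII, RIII under $I$, over all sign and direction variants, are generated by the relations of Figure \ref{polyakrels} --- is, as you say, where the real work lies; in \cite{MR1763963} this is by construction of $\mathscr{P}$, and your proposal correctly identifies it as the step that cannot be waved through.
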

Denote by $\mathscr{A}_n$ the submodule of all diagrams having more than $n$ arrows.  Let $\mathscr{P}_n=\mathscr{A}/(\mathscr{A}_n+\mathscr{R})$. The map $I_n:\mathbb{Z}[\mathscr{K}] \to \mathscr{P} \to \mathscr{P}_n$ is the above isomorphism composed with the projection.
\begin{theorem} [Goussarov, Polyak, Viro, \cite{MR1763963}] \label{GPVuni} The map $I_n:\mathbb{Z}[\mathscr{K}] \to \mathscr{P} \to \mathscr{P}_n$ is universal in the sense that if $G$ is any abelian group, and $v$ is a GPV finite-type invariant of type $\le n$, then there is a map $\bar{v}:\mathscr{P}_n \to G$ such that the following diagram commutes:
\[
\xymatrix{\mathbb{Z}[\mathscr{K}] \ar[r]^v \ar[d]_I & G \\
\mathscr{P} \ar[r]_{\varphi_n} \ar[ur]_{v I^{-1}} & \mathscr{P}_n \ar@{-->}[u]_{\bar{v}} \\
}
\]
In particular, the vector space of rational valued invariants of type $\le n$ is finite dimensional and can be identified with $\text{Hom}_{\mathbb{Z}}(\mathscr{P}_n,\mathbb{Q})$.
\end{theorem}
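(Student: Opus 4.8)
The plan is to reduce everything to Theorem \ref{iisom}, which already furnishes the isomorphism $I : \mathbb{Z}[\mathscr{K}] \xrightarrow{\sim} \mathscr{P}$. Given a GPV finite-type invariant $v : \mathbb{Z}[\mathscr{K}] \to G$ of degree $\le n$, the composite $v\circ I^{-1} : \mathscr{P} \to G$ is automatically defined, and the whole assertion is that it factors through the projection $\varphi_n : \mathscr{P} \to \mathscr{P}_n$. Since $\ker\varphi_n$ is the image in $\mathscr{P}$ of the submodule $\mathscr{A}_n$ generated by dashed diagrams with more than $n$ arrows, I would first observe that it is enough to prove that $v$ vanishes on $I^{-1}$ of that image, and that the natural guess is $I^{-1}(\mathscr{A}_n) = F_{n+1}$, where $F_{n+1}\subseteq\mathbb{Z}[\mathscr{K}]$ is the subgroup spanned by all elements obtained by resolving a diagram carrying at least $n+1$ semi-virtual crossings. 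By the very definition of GPV degree $\le n$, $v$ kills $F_{n+1}$, so this identification closes the argument for the existence (and, since $\varphi_n\circ I$ is onto, uniqueness) of $\bar v$.

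To establish $I^{-1}(\mathscr{A}_n)=F_{n+1}$ I would argue both inclusions from the explicit formulas. The semi-virtual relation of Table \ref{crossdict} reads locally $(\text{solid})=(\text{dashed})+(\text{deleted})$, so expanding all solid arrows of a Gauss diagram $D$ by this rule reproduces exactly $\sum_{D'\subseteq D} i(D') = I(D)$; applying the rule only to the semi-virtual arrows of a chosen representative, together with the clause of Theorem \ref{iisom} that $I$ preserves dashed arrows, shows that $I$ sends an element with $\ge n+1$ semi-virtual crossings into the span of dashed diagrams with $\ge n+1$ dashed arrows, i.e. into the image of $\mathscr{A}_n$. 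Conversely, a generator $A$ of $\mathscr{A}_n$ has more than $n$ dashed arrows, and by the dictionary it is literally $I$ of a virtual-knot combination with that many semi-virtual crossings (concretely, $I^{-1}$ applied to $A$ via the inclusion-exclusion formula of Theorem \ref{iisom}), hence lies in $I(F_{n+1})$. This gives equality, and with it the factorization $v = \bar v\circ\varphi_n\circ I$.

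With $\bar v$ in hand, the finite-dimensionality statement should follow formally. First I would note that there are only finitely many equivalence classes of Gauss diagrams with at most $n$ arrows: at most $2n$ endpoints may be arranged on the line in finitely many ways, and each of the $\le n$ arrows then carries one of two directions and one of two signs. Hence $\mathscr{A}/\mathscr{A}_n$, and therefore its quotient $\mathscr{P}_n$, is a finitely generated abelian group, so $\mathrm{Hom}_{\mathbb{Z}}(\mathscr{P}_n,\mathbb{Q})$ is a finite-dimensional $\mathbb{Q}$-vector space. The correspondence $v\mapsto\bar v$ is injective because $\varphi_n\circ I$ is surjective, and surjective because for any $\bar v\in\mathrm{Hom}_{\mathbb{Z}}(\mathscr{P}_n,\mathbb{Q})$ the composite $\bar v\circ\varphi_n\circ I$ is a rational invariant annihilating $F_{n+1}$, hence of degree $\le n$; this identifies the two spaces.

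The hard part will be the two-way bookkeeping in the middle step: one must be sure that $I$ translates the filtration of $\mathbb{Z}[\mathscr{K}]$ by number of semi-virtual crossings precisely onto the filtration of $\mathscr{P}$ by number of dashed arrows, with nothing lost or created in passing to the quotient by the Polyak relations $\mathscr{R}$. That last point is exactly where Theorem \ref{iisom} is indispensable — it is what guarantees that $\mathscr{R}$ is the image under $I$ of the relations generated by the Reidemeister and virtual moves, so that the two quotient filtrations really do match; granting that, the inclusion-exclusion identities for $I$ and $I^{-1}$ together with the dashed-arrow-preservation clause make the comparison routine.
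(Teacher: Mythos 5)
Your argument is correct, but note that the paper itself offers no proof of this statement: it is quoted from Goussarov--Polyak--Viro \cite{MR1763963}, so there is nothing internal to compare against. Your reconstruction follows the standard GPV route, and it uses exactly the ingredients the paper later relies on implicitly (in the proof of Theorem \ref{derivthm}): the isomorphism $I:\mathbb{Z}[\mathscr{K}]\to\mathscr{P}$ of Theorem \ref{iisom}, the dashed-arrow-preservation clause, and the fact that a diagram all of whose arrows are semi-virtual is sent by $I$ to the corresponding generator of $\mathscr{A}$ (equivalently, the inclusion--exclusion formula for $I^{-1}$ regroups into a single diagram with that many semi-virtual crossings). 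The only place where your wording should be tightened is the slogan $I^{-1}(\mathscr{A}_n)=F_{n+1}$: since $\mathscr{A}_n$ lives in $\mathscr{A}$ while $I$ is defined on $\mathbb{Z}[\mathscr{K}]$ only after passing to $\mathscr{P}=\mathscr{A}/\mathscr{R}$, the precise statement is $I(F_{n+1})=(\mathscr{A}_n+\mathscr{R})/\mathscr{R}=\ker\varphi_n$; your two inclusions (generators of $\mathscr{A}_n$ are $I$ of classes in $F_{n+1}$, and $I$ of anything presented with more than $n$ semi-virtual crossings lands in the span of diagrams with more than $n$ dashed arrows, hence in $\ker\varphi_n$) prove exactly this, and your final paragraph already flags that the comparison is made modulo $\mathscr{R}$, which Theorem \ref{iisom} licenses. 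With that reading, the factorization $v=\bar v\circ\varphi_n\circ I$, its uniqueness from surjectivity of $\varphi_n\circ I$, and the identification of degree-$\le n$ rational invariants with $\mathrm{Hom}_{\mathbb{Z}}(\mathscr{P}_n,\mathbb{Q})$ (finite-dimensional because there are finitely many Gauss diagrams with at most $n$ arrows) all go through as you describe.
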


\subsection{Discrete Derivatives, Notation, Polynomials} The material in this section is essentially that of the classic umbral calculus. Different forms of it appear in many papers. Some additional care is taken in their definition here because our twist sequences must also account for the signs of the crossings.

Suppose that $A$ is an abelian group and $\Phi:\mathbb{Z}\to A$ a function.  We define two discrete derivatives of $\Phi$, denoted $\partial^+\Phi:\mathbb{Z} \to A$ and $\partial^-\Phi:\mathbb{Z} \to A$, as follows: 
\[
(\partial^+\Phi)(z)=\Phi(z+1)-\Phi(z)
\]
\[
(\partial^-\Phi)(z)=\Phi(z)-\Phi(z-1)
\]
Now, let $n \in \mathbb{N}$ be fixed and $\Phi:\mathbb{Z}^n \to A$.  For each $i$, $1 \le i \le n$, we define partial discrete derivatives with respect to each coordinate, denoted $\partial_i^{\nu}:\mathbb{Z}^n \to A$, where $\nu=\pm$:
\[
(\partial_i^+\Phi)(z_1,\ldots,z_i,\ldots,z_n)=\Phi(z_1,\ldots,z_i+1,\ldots,z_n)-\Phi(z_1,\ldots,\; z_i,\ldots,z_n)
\]
\[
(\partial_i^-\Phi)(z_1,\ldots,z_i,\ldots,z_n)=\Phi(z_1,\ldots,z_i,\ldots,z_n)-\Phi(z_1,\ldots,\; z_i-1,\ldots,z_n)
\]
Discrete derivatives can be iterated in the obvious way.  An immediate consequence is that $\partial_i^{\mu} \partial_j^{\nu}=\partial_{j}^{\nu} \partial_i^{\mu}$. Define $\partial_i^0\Phi(z)=\Phi(z)$. If $a \in \mathbb{N}$ and $z \in \mathbb{Z}^n$, we define higher derivatives inductively by $\partial^a_i\Phi(z)=\partial^+_i(\partial^{a-1}_i\Phi)(z)$ and $\partial^{-a}_i\Phi(z)=\partial^-_i(\partial_i^{-a+1}\Phi)(z)$.  Now for $\alpha=(\alpha_1,\ldots,\alpha_n) \in \mathbb{Z}^n$, we define:
\[
(\partial^{\alpha}\Phi)(z)=(\partial^{\alpha_1}_1 \partial^{\alpha_2}_2 \cdots \partial^{\alpha_n}_n \Phi)(z)
\]
\begin{lemma} \label{discderiv} Let $\alpha \in \mathbb{N} \cup\{0\}$ and $\Phi:\mathbb{Z} \to A$. Then we have the following formulas:
\[
\partial^{\alpha} \Phi(z)=\sum_{k=0}^{\alpha}(-1)^{\alpha+k} {\alpha \choose k} \Phi(z+k)
\]
\[
\partial^{-\alpha} \Phi(z)=\sum_{k=0}^{\alpha}(-1)^{k} {\alpha \choose k} \Phi(z-k)
\] 
\end{lemma}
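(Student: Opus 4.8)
The plan is to prove both identities by induction on $\alpha \in \mathbb{N}\cup\{0\}$, running the two cases in parallel since they are mirror images. For the base case $\alpha = 0$, the convention $\partial_i^0\Phi = \Phi$ gives $\partial^0\Phi(z) = \Phi(z)$, while both right-hand sides collapse to the single term $k=0$, namely $(-1)^0\binom{0}{0}\Phi(z) = \Phi(z)$, so equality holds.

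For the inductive step on the first formula, assume the claim for some $\alpha \ge 0$ and use the inductive definition $\partial^{\alpha+1}\Phi = \partial^+(\partial^\alpha\Phi)$ to write
\[
\partial^{\alpha+1}\Phi(z) = \partial^\alpha\Phi(z+1) - \partial^\alpha\Phi(z).
\]
Substituting the inductive hypothesis into both terms yields
\[
\partial^{\alpha+1}\Phi(z) = \sum_{k=0}^{\alpha}(-1)^{\alpha+k}\binom{\alpha}{k}\Phi(z+k+1) - \sum_{k=0}^{\alpha}(-1)^{\alpha+k}\binom{\alpha}{k}\Phi(z+k).
\]
Re-indexing the first sum by $j = k+1$ converts it to $\sum_{j=1}^{\alpha+1}(-1)^{\alpha+j-1}\binom{\alpha}{j-1}\Phi(z+j)$. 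Relabeling $j$ as $k$, extracting the boundary term $k=\alpha+1$ from the first sum and $k=0$ from the second, and combining the overlap $1 \le k \le \alpha$, the coefficient of $\Phi(z+k)$ there becomes $(-1)^{\alpha+1+k}\big(\binom{\alpha}{k-1} + \binom{\alpha}{k}\big)$, so Pascal's rule $\binom{\alpha}{k-1} + \binom{\alpha}{k} = \binom{\alpha+1}{k}$ identifies it with the interior of $\sum_{k=0}^{\alpha+1}(-1)^{\alpha+1+k}\binom{\alpha+1}{k}\Phi(z+k)$; a direct check shows the two extracted boundary terms equal exactly the $k=0$ and $k=\alpha+1$ terms of this sum. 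The second formula is handled identically via $\partial^{-(\alpha+1)}\Phi = \partial^-(\partial^{-\alpha}\Phi)$, i.e. $\partial^{-(\alpha+1)}\Phi(z) = \partial^{-\alpha}\Phi(z) - \partial^{-\alpha}\Phi(z-1)$, with the same Pascal-rule collapse; the cleaner sign $(-1)^k$ there reflects that no overall $(-1)^\alpha$ factor is generated in the backward case.

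Alternatively, and more transparently, one can bypass the induction by introducing the shift operator $(S\Phi)(z) = \Phi(z+1)$, so that $\partial^+ = S - \mathrm{id}$ and $\partial^- = \mathrm{id} - S^{-1}$, whence $\partial^\alpha = (S - \mathrm{id})^\alpha$ and $\partial^{-\alpha} = (\mathrm{id} - S^{-1})^\alpha$; since $S$, $S^{-1}$, and $\mathrm{id}$ all commute, the ordinary binomial theorem applies and expands these powers directly into the two claimed sums. I intend to present this operator derivation as the main argument. There is no substantive obstacle here: the only care required is the bookkeeping of which terms survive at the two ends of the summation range after re-indexing and confirming that the signs line up so Pascal's identity applies uniformly on the interior — in the operator approach, even that is subsumed into the standard binomial expansion.
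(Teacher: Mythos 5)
Your proof is correct and follows essentially the same route as the paper, whose entire proof is the remark that the formulas follow from the binomial theorem and Pascal's triangle; your induction-with-Pascal's-rule argument and the shift-operator binomial expansion are just that argument written out in full.
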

\begin{proof} The result follows easily from the binomial theorem and Pascal's triangle.
\end{proof}
From discrete derivatives, one may obtain discrete power series.  Let $n \in \mathbb{N}$.  Define $\mathbb{Z}_2=\{+,-\}$ to be the group on two elements.  We interpret $\nu =(\nu_1,\nu_2,\ldots,\nu_n)\in \mathbb{Z}_2^n$ to be a choice of a positive or negative derivative in each coordinate. Define: 
\[
\mathbb{Z}^+=\{z \in \mathbb{Z}:z\ge 0\}, \; \mathbb{Z}^-=\{z \in \mathbb{Z}:z \le 0\}
\]
If $\nu \in \mathbb{Z}_2^n$, define $\mathbb{Z}^{\nu}=\mathbb{Z}^{\nu_1}\times \ldots \times \mathbb{Z}^{\nu_n}$.

Let $\nu\in\mathbb{Z}_2$ be given and $z \in \mathbb{Z}^{\nu}$. Let $\alpha\in\mathbb{Z}^+$. The discrete power series differs from a normal power series in that powers of the variable $z$ are replaced with $z^{\nu \alpha}$:
\[
z^{\nu\alpha}=\left\{ \begin{array}{cl} z(z-1)(z-2)\cdots(z-\alpha+1) &:\; \nu=+ \\ 
                                    z(z+1)(z+2)\cdots(z+\alpha-1) &: \;\nu=- 
\end{array}\right.
\]
If $\alpha=0$, we set $z^{\nu\alpha}=1$.  This defines a function from $\mathbb{Z}^{\nu} \to \mathbb{Z}$. Now let $n \in \mathbb{N}$ and $\nu=(\nu_1,\ldots,\nu_n)\in \mathbb{Z}_2^n$.  For $\alpha=(\alpha_1,\ldots,\alpha_n)\in (\mathbb{Z}^{+})^n$, we abuse notation and write $\nu \alpha=(\nu_1\alpha_1,\ldots,\nu_n\alpha_n)$.  The notion of powers for lattices of dimension $n>1$ is given by a function $z^{\nu\alpha}:\mathbb{Z}^{\nu}\to\mathbb{Z}$. The function is defined for each $z=(z_1,\ldots,z_n) \in \mathbb{Z}^\nu$ via the equation:
\[
z^{\nu \alpha}=z_1^{\nu_1\alpha_1}z_2^{\nu_2 \alpha_2}\cdots z_n^{\nu_n \alpha_n}
\]

For $\alpha=(\alpha_1,\ldots,\alpha_n)\in (\mathbb{Z}^+)^n$, define $|\alpha|=\alpha_1+\ldots+\alpha_n$. This norm should be interpreted as the degree of $z^{\nu \alpha}$. Also, it is convenient to define $\alpha!=\alpha_1!\alpha_2!\cdots\alpha_n!$

Finally, enough notation is in place to define the discrete power series itself.  Let $n \in \mathbb{N}$.  Let $\Phi:\mathbb{Z}^n \to \mathbb{Q}$ be a function. Let $\nu\in\mathbb{Z}_2^n$ be given and $z\in\mathbb{Z}^{\nu}$.  The $\nu$-power series of $\Phi$, denoted $\Sigma_{\Phi}^{\nu}:\mathbb{Z}^{\nu}\to \mathbb{Q}$, is defined to be:
\[
\Sigma_{\Phi}^{\nu}(z)=\lim_{n\to\infty} \sum_{0 \le |\alpha|\le n} \frac{(\partial^{\nu \alpha}\Phi)(\vec0)}{\alpha!} z^{\nu \alpha}
\]
For example, if $n=2$, we have a different series defined in each quadrant of $\mathbb{Z} \times \mathbb{Z}$.  For $n=3$, there is a different definition in each octant.

Having defined a power series for every element of $\mathbb{Z}_2^n$, it is a simple matter to define the power series $\Sigma_{\Phi}:\mathbb{Z}^n\to A$.  For $z \in \mathbb{Z}^{\nu}$, define $\Sigma_{\Phi}(z)=\Sigma_{\Phi}^{\nu}(z)$. Since $(\partial_i^0\Phi)(0)=\Phi(0)$ and $z^{\pm 0}=1$, $\Sigma_{\Phi}^{\nu}$ is well-defined whenever the above limit is defined.
\begin{lemma}Let $\Phi:\mathbb{Z} \to \mathbb{Q}$ be a function.  Let $\nu \in \mathbb{Z}_2$. The expression $\Sigma_{\Phi}^{\nu}(z)$ is defined for all $z \in \mathbb{Z}^{\nu}$.  Moreover, $\Sigma_{\Phi}^{\nu}(z)=\Phi(z)$ for all $z \in \mathbb{Z}^{\nu}$. 
\end{lemma}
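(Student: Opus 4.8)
\emph{Proof proposal.} The plan is to recognize that, for a fixed $z\in\mathbb{Z}^{\nu}$, the series defining $\Sigma_{\Phi}^{\nu}(z)$ is actually a \emph{finite} sum, and then to identify that finite sum with $\Phi(z)$ via the Newton forward/backward difference formula. First I would treat $\nu=+$, so $z\ge 0$. The key observation is that $z^{+\alpha}=z(z-1)\cdots(z-\alpha+1)$ contains the factor $z-z=0$ as soon as $\alpha\ge z+1$, so $z^{+\alpha}=0$ for all $\alpha>z$. Hence for every $n\ge z$,
\[
\sum_{0\le\alpha\le n}\frac{(\partial^{\alpha}\Phi)(0)}{\alpha!}\,z^{+\alpha}=\sum_{\alpha=0}^{z}\frac{(\partial^{\alpha}\Phi)(0)}{\alpha!}\,z^{+\alpha},
\]
the partial sums stabilize, the defining limit exists, and $\Sigma_{\Phi}^{+}(z)$ equals this finite sum; in particular $\Sigma_{\Phi}^{+}$ is defined on all of $\mathbb{Z}^{+}$.

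Next I would evaluate the finite sum. Since $z^{+\alpha}/\alpha!=\binom{z}{\alpha}$, Lemma~\ref{discderiv} gives
\[
\Sigma_{\Phi}^{+}(z)=\sum_{\alpha=0}^{z}\binom{z}{\alpha}\sum_{k=0}^{\alpha}(-1)^{\alpha+k}\binom{\alpha}{k}\Phi(k).
\]
Interchanging the order of summation and using the subset-of-a-subset identity $\binom{z}{\alpha}\binom{\alpha}{k}=\binom{z}{k}\binom{z-k}{\alpha-k}$ yields
\[
\Sigma_{\Phi}^{+}(z)=\sum_{k=0}^{z}\binom{z}{k}\Phi(k)\sum_{j=0}^{z-k}(-1)^{j}\binom{z-k}{j}=\sum_{k=0}^{z}\binom{z}{k}\Phi(k)\,(1-1)^{z-k},
\]
and by the binomial theorem the inner factor vanishes unless $k=z$, where it is $1$; so the sum collapses to $\Phi(z)$.

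The case $\nu=-$ is entirely analogous: now $z\le 0$, the product $z^{-\alpha}=z(z+1)\cdots(z+\alpha-1)$ picks up the factor $0$ once $\alpha\ge|z|+1$, so the defining limit is again the finite sum $\sum_{\alpha=0}^{|z|}\frac{(\partial^{-\alpha}\Phi)(0)}{\alpha!}z^{-\alpha}$; using $z^{-\alpha}/\alpha!=(-1)^{\alpha}\binom{|z|}{\alpha}$ together with the second formula of Lemma~\ref{discderiv} and the same double-sum manipulation reduces it to $\Phi(-|z|)=\Phi(z)$. (Alternatively one can prove both cases simultaneously by induction on $|z|$: the base case $z=0$ is immediate since $z^{\nu 0}=1$ and $(\partial_{1}^{0}\Phi)(0)=\Phi(0)$, while the inductive step follows from Pascal's rule together with $(\partial^{\nu\alpha}(\partial^{\nu}\Phi))(0)=(\partial^{\nu(\alpha+1)}\Phi)(0)$, which expresses $\Sigma_{\Phi}^{\nu}$ at the neighbor of $z$ farther from the origin in terms of $\Sigma_{\Phi}^{\nu}(z)$ and $\Sigma_{\partial^{\nu}\Phi}^{\nu}(z)$, and then invokes $\partial^{\nu}\Phi(z)=\pm(\Phi(z\pm 1)-\Phi(z))$.) I do not anticipate a genuine obstacle; the only points needing care are noticing at the outset that the nominally infinite series terminates for fixed $z$, and keeping the signs straight in the $\nu=-$ case.
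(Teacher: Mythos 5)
Your proposal is correct, and it is essentially the paper's argument carried out in full: the paper's proof is just the one-line appeal to ``the binomial theorem and the properties of the binomial coefficients,'' which is exactly the content of your computation (termination of the series because $z^{\nu\alpha}$ vanishes for $\alpha>|z|$, the identity $\binom{z}{\alpha}\binom{\alpha}{k}=\binom{z}{k}\binom{z-k}{\alpha-k}$, and the collapse of $(1-1)^{z-k}$). No gaps; the sign bookkeeping in the $\nu=-$ case is handled correctly.
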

\begin{proof} This is a consequence of the binomial theorem and the properties of the binomial coefficients
\end{proof}

\begin{theorem} \label{powserthm} Let $n \in \mathbb{N}$ and $\Phi:\mathbb{Z}^n \to \mathbb{Q}$ be a function.  Let $\nu \in \mathbb{Z}_2^n$. The expression $\Sigma_{\Phi}^{\nu}(z)$ is defined for all $z \in \mathbb{Z}^{\nu}$.  Moreover, $\Sigma_{\Phi}^{\nu}(z)=\Phi(z)$ for all $z \in \mathbb{Z}^{\nu}$. Thus, $\Sigma_{\Phi}(z)=\Phi(z)$ for all $z \in \mathbb{Z}^n$.
\end{theorem}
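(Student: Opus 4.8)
The plan is to reduce the general statement to the one-variable case established in the preceding Lemma, by peeling off one coordinate at a time.

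First I would dispose of well-definedness. For a fixed $z\in\mathbb{Z}^{\nu}$ and each $i$: if $\nu_i=+$ then $z_i\ge 0$ and the product $z_i^{+\alpha_i}=z_i(z_i-1)\cdots(z_i-\alpha_i+1)$ contains the factor $0$ as soon as $\alpha_i>z_i$; similarly if $\nu_i=-$ then $z_i\le 0$ and $z_i^{-\alpha_i}=z_i(z_i+1)\cdots(z_i+\alpha_i-1)$ contains the factor $0$ as soon as $\alpha_i>|z_i|$. Hence $z^{\nu\alpha}=\prod_i z_i^{\nu_i\alpha_i}=0$ whenever $\alpha_i>|z_i|$ for some $i$, so only the $\alpha$ with $0\le\alpha_i\le|z_i|$ for all $i$ actually contribute. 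The partial sums in the definition of $\Sigma_{\Phi}^{\nu}(z)$ are therefore eventually constant (once the truncation parameter exceeds $|z_1|+\cdots+|z_n|$), so the limit exists.

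Next, for each coordinate $i$ and each choice of the remaining coordinates, define the partial summation operator
\[
(S_i^{\nu_i}\Phi)(z_1,\dots,z_n)=\sum_{\alpha_i\ge 0}\frac{(\partial_i^{\nu_i\alpha_i}\Phi)(z_1,\dots,z_{i-1},0,z_{i+1},\dots,z_n)}{\alpha_i!}\,z_i^{\nu_i\alpha_i},
\]
which by the same vanishing argument is a finite sum. Applying the one-variable Lemma to the slice $t\mapsto\Phi(z_1,\dots,z_{i-1},t,z_{i+1},\dots,z_n)$ shows $(S_i^{\nu_i}\Phi)(z)=\Phi(z)$ whenever $z_i\in\mathbb{Z}^{\nu_i}$. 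I would then check, by expanding and using that the partial discrete derivatives $\partial_i^{\mu}$ and $\partial_j^{\nu}$ commute (as noted in the text) together with the fact that every sum in sight is finite (so interchanging orders of summation and of derivatives is legitimate), that
\[
\Sigma_{\Phi}^{\nu}=S_1^{\nu_1}\circ S_2^{\nu_2}\circ\cdots\circ S_n^{\nu_n}\,\Phi .
\]
Concretely, applying $S_n^{\nu_n}$ produces a finite sum over $\alpha_n$ of terms $\partial_n^{\nu_n\alpha_n}\Phi$ evaluated with $z_n$ replaced by $0$; applying $S_{n-1}^{\nu_{n-1}}$ to the result commutes past this sum and past $\partial_n^{\nu_n\alpha_n}$, introducing a sum over $\alpha_{n-1}$ and setting $z_{n-1}=0$; iterating this peeling produces exactly the multi-index sum with all coordinates evaluated at $\vec 0$, i.e.\ $\Sigma_{\Phi}^{\nu}$. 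Evaluating the composite at any $z\in\mathbb{Z}^{\nu}$ and using $(S_i^{\nu_i}\Phi)(z)=\Phi(z)$ at each stage (legitimate since $z_i\in\mathbb{Z}^{\nu_i}$ for every $i$) gives $\Sigma_{\Phi}^{\nu}(z)=\Phi(z)$. Finally, for arbitrary $z\in\mathbb{Z}^n$, taking $\nu$ to record the signs of the coordinates of $z$ — with the harmless ambiguity at coordinates equal to $0$, exactly as in the remark preceding the theorem — gives $\Sigma_{\Phi}(z)=\Sigma_{\Phi}^{\nu}(z)=\Phi(z)$.

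The main obstacle I anticipate is purely organizational rather than conceptual: keeping track, as the coordinates are peeled off one at a time, of which coordinates have already been set to $0$, and verifying that each $\partial_i^{\nu_i\alpha_i}$ genuinely commutes with the summation operators applied in the other coordinates. This is precisely where the commutativity relation $\partial_i^{\mu}\partial_j^{\nu}=\partial_j^{\nu}\partial_i^{\mu}$ and the finiteness of every sum involved do all the work. An alternative, more computational route would be to expand each $\partial^{\nu\alpha}\Phi(\vec 0)$ via Lemma \ref{discderiv} into an alternating sum of values of $\Phi$ and verify the resulting multidimensional finite-difference identity directly, but the inductive reduction above sidesteps that bookkeeping.
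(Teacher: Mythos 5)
Your proposal is correct, and it follows essentially the same route as the paper, which simply cites the one-variable lemma (together with binomial-coefficient properties) and leaves the multivariable bookkeeping to the reader; your coordinate-by-coordinate peeling via the operators $S_i^{\nu_i}$, justified by the finiteness of all sums and the commutativity $\partial_i^{\mu}\partial_j^{\nu}=\partial_j^{\nu}\partial_i^{\mu}$, is exactly the detail the paper's one-line proof omits. The well-definedness argument (vanishing of $z^{\nu\alpha}$ once some $\alpha_i>|z_i|$) and the remark handling the overlap at coordinates equal to $0$ are also correct.
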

\begin{proof} This is a consequence of the previous lemma and properties of the binomial coefficients.
\end{proof}
In the discrete case, a function $f:\mathbb{Z}^n \to A$ is said to be a polynomial of degree $\le m$ if all its discrete derivatives of total degree $>m$ vanish. The above theorem establishes a connection between discrete polynomials and regular polynomials $\mathbb{R}^n \to \mathbb{R}$. 
\begin{observation} \label{obs1} If $f:\mathbb{R}^n \to \mathbb{R}$ is a polynomial (in the usual sense) of degree $\le m$, then for every $\alpha \in (\mathbb{Z}^+)^n$, $\nu \in \mathbb{Z}_2^n$, if $|\alpha|>m$, then $\partial^{\nu\alpha}f(\vec{0})=0$. 
\end{observation}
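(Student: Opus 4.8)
The plan is to reduce the multivariable statement to the elementary one-variable fact that a single discrete derivative strictly lowers the degree of a polynomial, and then iterate. Note first that although $\partial^{\nu\alpha}$ was defined on functions $\mathbb{Z}^n\to A$, the value $\partial^{\nu\alpha}f(\vec0)$ is a finite $\mathbb{Z}$-linear combination of the values of $f$ at lattice points, so it depends only on the restriction $f|_{\mathbb{Z}^n}$ and may be computed directly from the polynomial formula for $f$.

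The key lemma I would isolate is this: if $g:\mathbb{R}^n\to\mathbb{R}$ is a polynomial of total degree $\le d$ and $1\le i\le n$, $\mu\in\{+,-\}$, then $\partial_i^{\mu}g$, defined by the same difference formula but now viewed on all of $\mathbb{R}^n$, is again a polynomial, of total degree $\le d-1$, with the convention that the identically zero polynomial has degree $<0$. By linearity it suffices to check this on a monomial $x_1^{a_1}\cdots x_n^{a_n}$ with $a_1+\cdots+a_n\le d$; for $\mu=+$ one has
\[
\partial_i^{+}\bigl(x_1^{a_1}\cdots x_n^{a_n}\bigr)=\Bigl(\prod_{j\ne i}x_j^{a_j}\Bigr)\bigl((x_i+1)^{a_i}-x_i^{a_i}\bigr),
\]
and the bracketed factor is a polynomial in $x_i$ of degree $\le a_i-1$ (identically zero when $a_i=0$), so the whole product has total degree $\le d-1$; the case $\mu=-$ is identical. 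Iterating, $\partial_i^{\nu_i\alpha_i}$ carries a polynomial of degree $\le d$ to one of degree $\le d-\alpha_i$.

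To finish, I would use the commutativity $\partial_i^{\mu}\partial_j^{\nu}=\partial_j^{\nu}\partial_i^{\mu}$ recorded above to write $\partial^{\nu\alpha}f=\partial_1^{\nu_1\alpha_1}\cdots\partial_n^{\nu_n\alpha_n}f$, and peel off the coordinates one at a time: applying the lemma shows $\partial^{\nu\alpha}f$ is a polynomial of total degree $\le m-(\alpha_1+\cdots+\alpha_n)=m-|\alpha|$. Since $|\alpha|>m$ this degree is negative, so $\partial^{\nu\alpha}f$ is the zero polynomial and in particular $\partial^{\nu\alpha}f(\vec0)=0$. There is essentially no obstacle here; the only point deserving a moment's care is the bookkeeping that the difference operator, a priori defined only on functions of a lattice, extends to an operator on polynomial functions on $\mathbb{R}^n$ that strictly decreases total degree. (One could equally well replace the degree count by a single induction on $|\alpha|$, but the version above seems cleanest.)
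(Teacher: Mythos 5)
Your proof is correct, but it runs along a different track than the one in the paper. The paper's argument stays inside the discrete-calculus framework already set up: in one variable it expands a polynomial of degree $\le m$ in the factorial-power basis $z^{\nu m}, z^{\nu(m-1)},\ldots,1$ (a linearly independent spanning set for that space) and notes that discrete derivatives of order $>m$ annihilate every basis element, then asserts the multivariable case follows by the same reasoning. You instead work in the ordinary monomial basis and isolate a degree-drop lemma: a single difference $\partial_i^{\pm}$, extended to polynomial functions on $\mathbb{R}^n$, sends a polynomial of total degree $\le d$ to one of total degree $\le d-1$, as one checks on a monomial via $(x_i+1)^{a_i}-x_i^{a_i}$; iterating coordinate by coordinate (note that $\partial^{\nu\alpha}=\partial_1^{\nu_1\alpha_1}\cdots\partial_n^{\nu_n\alpha_n}$ is already the paper's definition, so commutativity is not even needed there) gives total degree $\le m-|\alpha|<0$, hence the zero function, hence vanishing at $\vec{0}$. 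What your route buys is a genuinely self-contained multivariable argument — the induction on $|\alpha|$ is explicit rather than left as "a similar argument" — and it avoids invoking the factorial basis altogether; what the paper's route buys is economy in context, since the $z^{\nu\alpha}$ basis is exactly the machinery behind its discrete power series $\Sigma_{\Phi}^{\nu}$ and Theorem \ref{powserthm}, so the observation becomes a near-immediate corollary of structures already in place. The only cosmetic point in your write-up is the degree convention for the zero polynomial (better taken as $-\infty$ so the bound survives further differencing), but that is bookkeeping, not a gap.
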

\begin{proof}  Consider the $n=1$ case.  The set $z^{\pm \alpha}, z^{\pm (\alpha-1)},\ldots, 1$ of $\alpha$ polynomials is a linearly independent spanning set for the vector space of one variable polynomials of degree $\le \alpha$. Hence all discrete derivatives of order $> \alpha$ vanish. A similar argument may be used to establish the $n>1$ case. 
\end{proof}
\section{Twist Lattices}
\subsection{Regular and Fractional Twist Lattices} For knot invariants, local coordinates are obtained through twist sequences and twist lattices (see \cite{MR1997586}).  Here, they are defined via their Gauss diagrams.  

A proper pair of a Gauss diagram is a pair $(A,A')$, $A <A'$, of disjoint open intervals in $\mathbb{R}$ such that any chord or arrow with an endpoint in $A \cup A'$ has both endpoints in $A \cup A'$ and both endpoints are not in the same interval. 

A function $\Phi: \mathbb{Z} \to \mathscr{K}$ is called a regular twist lattice of dimension one (or twist sequence) if every long virtual knot in the image of $\Phi$ has an identical Gauss diagram presentation except in a proper pair $(A,A')$.  Inside the intervals $A$ and $A'$, $\Phi(k)$ is given by one of the rows in Figure \ref{regtwist} labeled $XYZ$, where $X=O$ (odd number of arrows) or $X=E$ (even number of arrows). In each column, the left vertical arrow corresponds to the interval $A$ in $\mathbb{R}$ and the right vertical arrow corresponds to the interval $A'$ in $\mathbb{R}$.  The right vertical arrow points upwards or downwards for all diagrams in the row $XYZ$.  Consequently, the ordering of the arrow endpoints in $A'$ is either the same ($S$) or the reverse ($B$) of the arrow endpoints in $A$.  In the table, the variable $Y$ may be either $S$ or $B$. The variable $Z$ corresponds to whether $\Phi(1)$ points left($L$) or right($R$). Thus, there are eight possible kinds of regular twist sequences.  

In the literature, the case that $Y=S$ is often called a vertical twist sequence while the case $Y=B$ is often called a horizontal twist sequence (for example, see \cite{MR1657727}).

A fractional twist lattice of dimension one (or fractional twist sequence) is a function $\Phi:\mathbb{Z} \to \mathscr{K}$ such that all long virtual knots in the image have identical Gauss diagram presentations outside a proper pair $(A,A')$ and inside $(A,A')$, the diagrams resemble one of the sequences $FYZ$ shown in Figure \ref{regtwist}. The variables $Y$ and $Z$ are as explained above.

A regular twist lattice of dimension $m$ is a function $\Phi:\mathbb{Z}^m \to \mathscr{K}$ together with a $m$ disjoint proper pairs $\{(A_1,A_1'),\ldots,(A_2,A_2')\}$.  Furthermore, it is required that for each of the $m$ canonical inclusions $\iota_i: \mathbb{Z} \to \mathbb{Z}^m $, the composition $\Phi \circ \iota_i: \mathbb{Z} \to \mathscr{K}$ is a regular twist lattice of dimension one, with associated pair of intervals $(A_i,A_i')$, of type $EYZ$ or $OYZ$. For a fractional twist lattice of dimension $m$, it is instead required that $\Phi \circ \iota_i:\mathbb{Z} \to \mathscr{K}$ is of type $FYZ$, $1 \le i \le m$.
\begin{figure}  
\[
\begin{array}{|c|cccc|cccc|} \hline
 & & \multicolumn{3}{c|}{\text{all arrows, } \varepsilon=-} & \multicolumn{3}{c}{\text{all arrows, } \varepsilon=+}& \\ \hline
  & \ldots , & k=-2,  & k=-1, & k=0 & k=1, & k=2, & k=3,& \ldots \\ \hline
OYR & \cdots & \scalebox{.12}{\psfig{figure=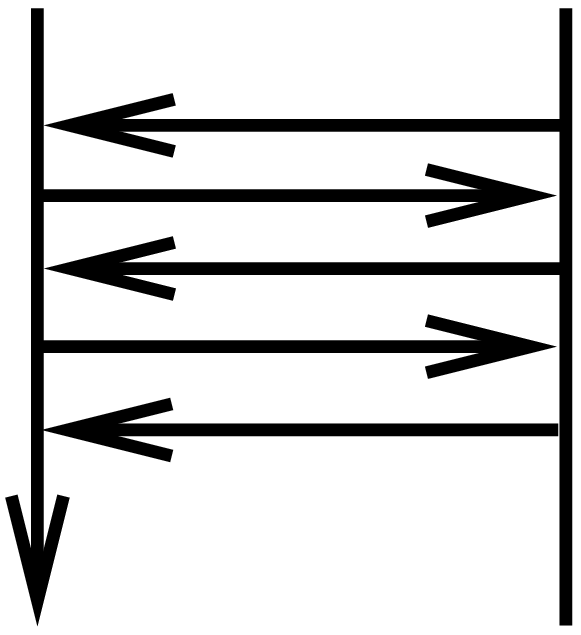}} & 
\scalebox{.12}{\psfig{figure=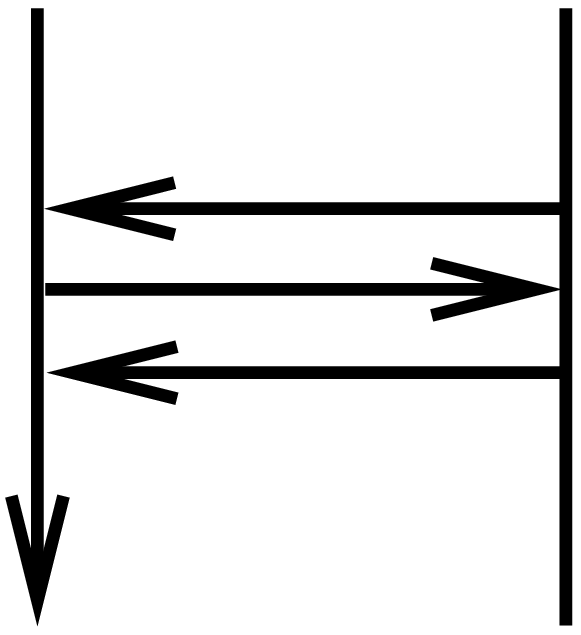}} &
\scalebox{.12}{\psfig{figure=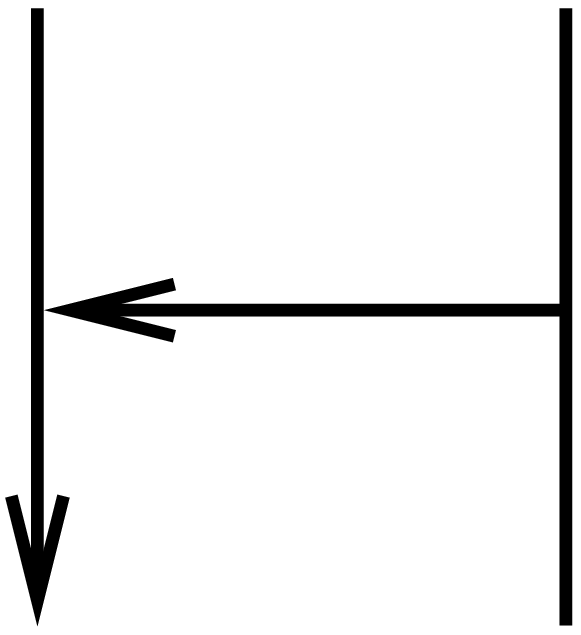}}& 
\scalebox{.12}{\psfig{figure=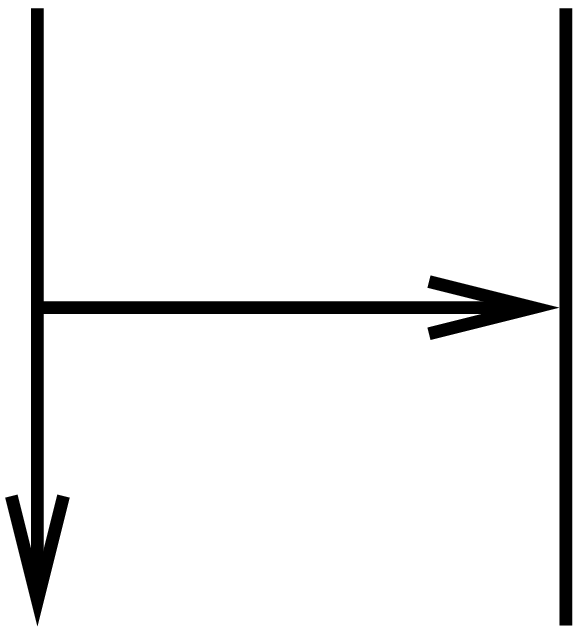}} & 
\scalebox{.12}{\psfig{figure=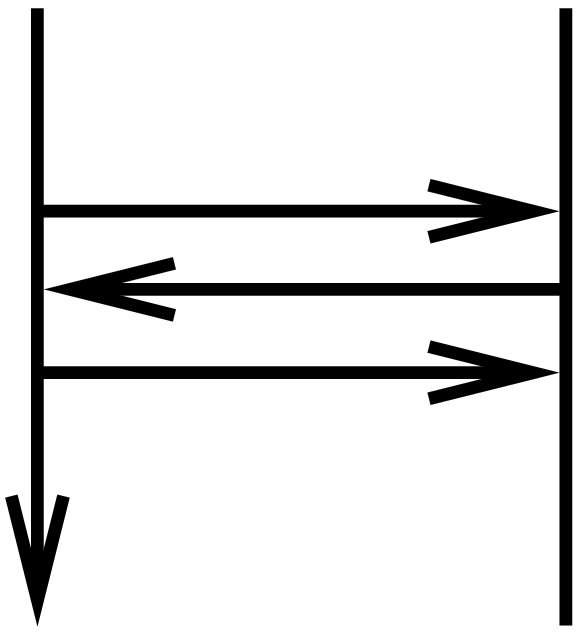}}& 
\scalebox{.12}{\psfig{figure=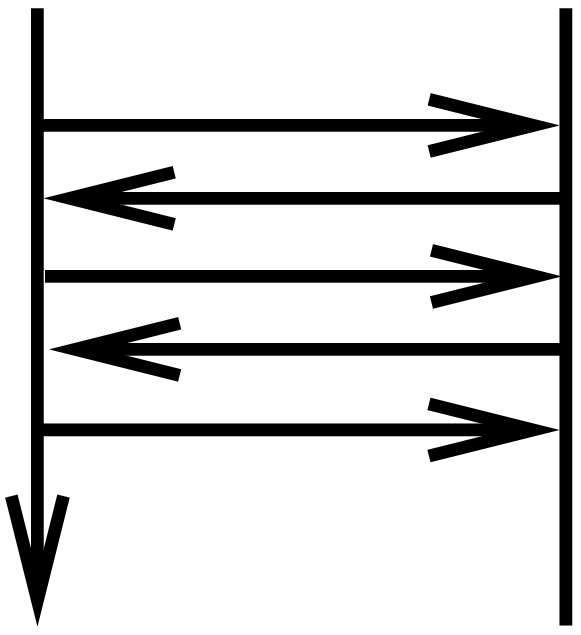}}& \cdots \\ \hline
OYL & \cdots & \scalebox{.12}{\psfig{figure=osrneq3noy.eps}} & 
\scalebox{.12}{\psfig{figure=osrneq2noy.eps}} &
\scalebox{.12}{\psfig{figure=osrneq1noy.eps}}& 
\scalebox{.12}{\psfig{figure=osrneq0noy.eps}} & 
\scalebox{.12}{\psfig{figure=osrneqneg1noy.eps}}& 
\scalebox{.12}{\psfig{figure=osrneqneg2noy.eps}}& \cdots \\ \hline
EYR & \cdots & \scalebox{.12}{\psfig{figure=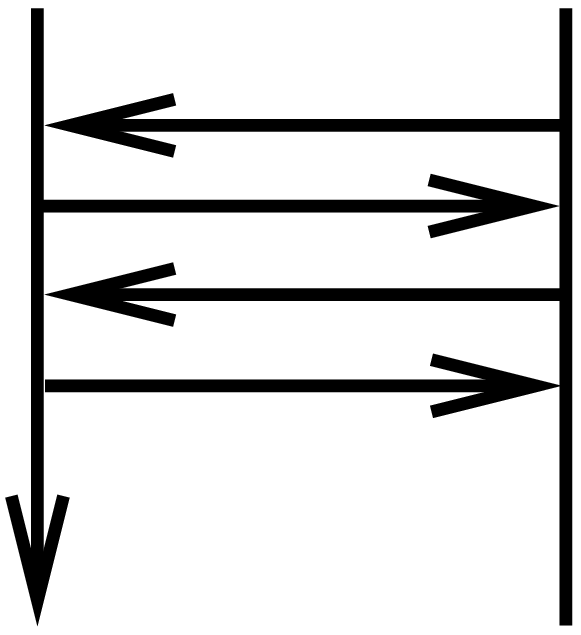}} & 
\scalebox{.12}{\psfig{figure=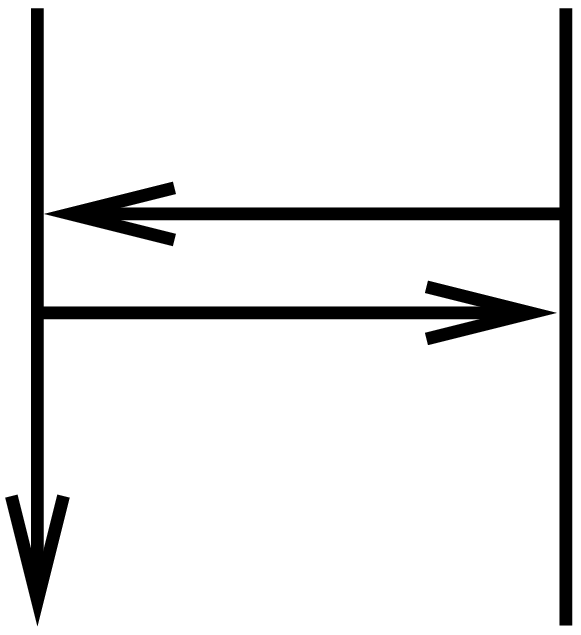}} &
\scalebox{.12}{\psfig{figure=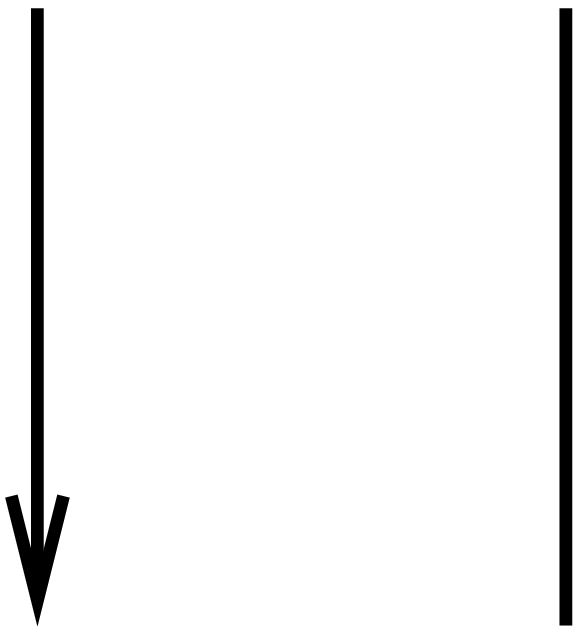}}& 
\scalebox{.12}{\psfig{figure=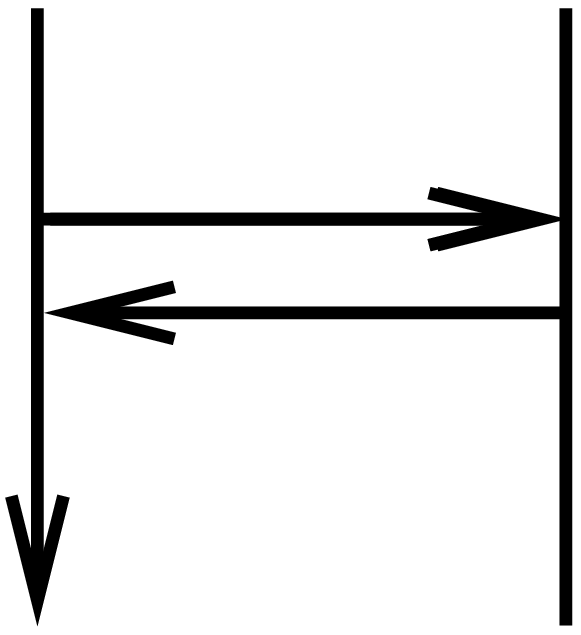}} & 
\scalebox{.12}{\psfig{figure=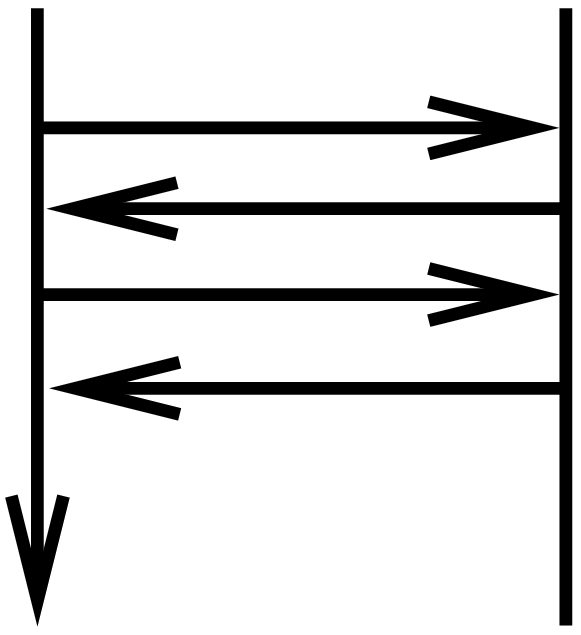}}& 
\scalebox{.12}{\psfig{figure=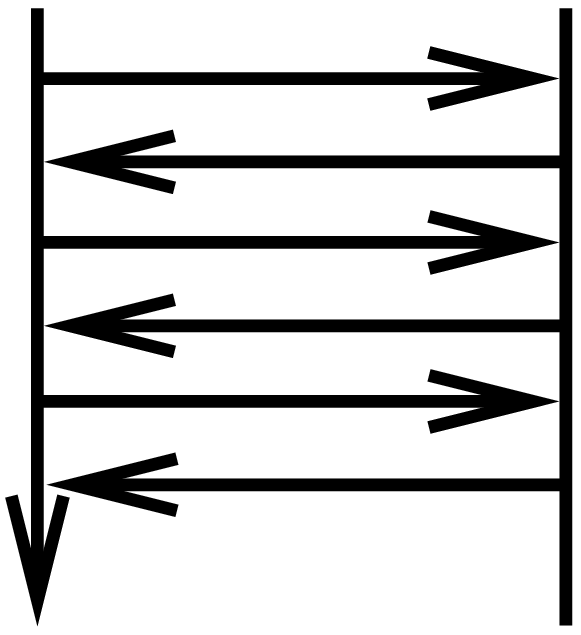}}& \cdots \\ \hline
EYL & \cdots & \scalebox{.12}{\psfig{figure=esrneq2noy.eps}} & 
\scalebox{.12}{\psfig{figure=esrneq1noy.eps}} &
\scalebox{.12}{\psfig{figure=esrneq0noy.eps}}& 
\scalebox{.12}{\psfig{figure=esrneqneg1noy.eps}} & 
\scalebox{.12}{\psfig{figure=esrneqneg2noy.eps}}& 
\scalebox{.12}{\psfig{figure=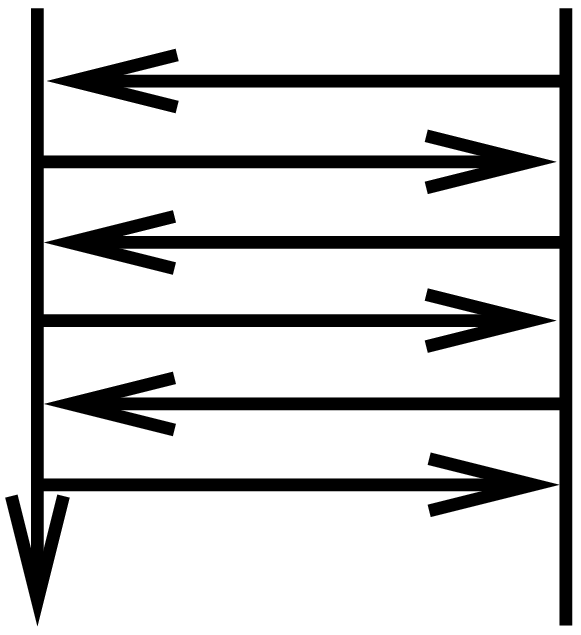}}& \cdots \\ \hline
FYR & \cdots & \scalebox{.12}{\psfig{figure=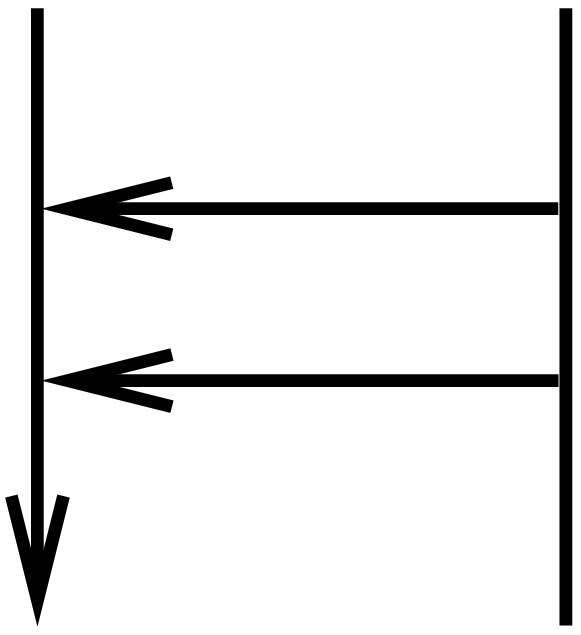}} & 
\scalebox{.12}{\psfig{figure=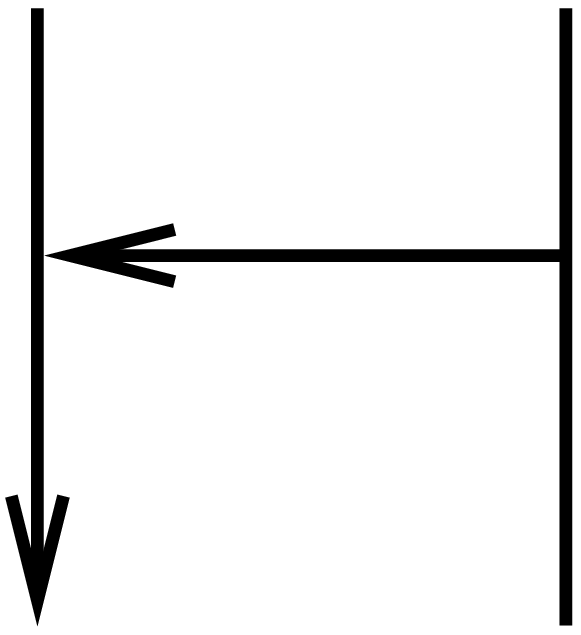}} &
\scalebox{.12}{\psfig{figure=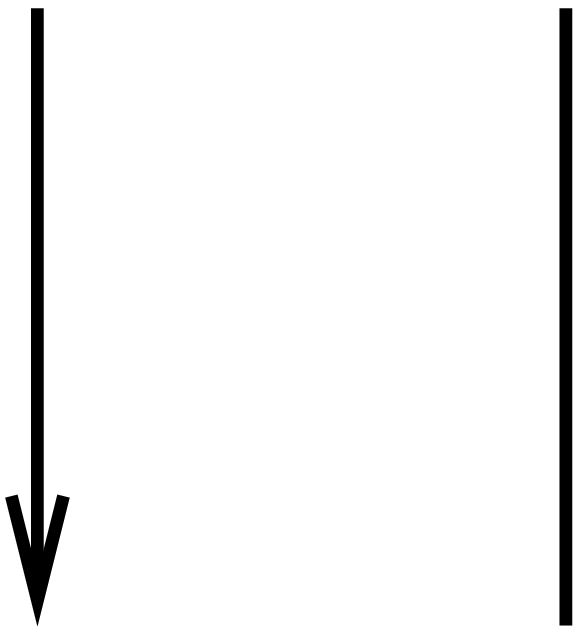}}& 
\scalebox{.12}{\psfig{figure=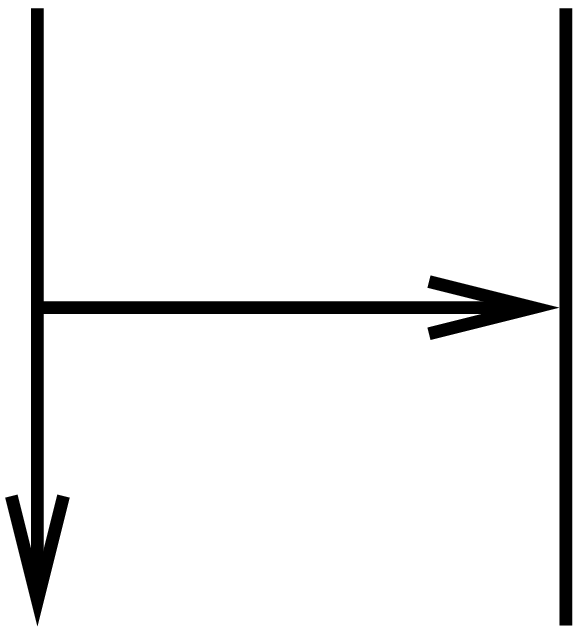}} & 
\scalebox{.12}{\psfig{figure=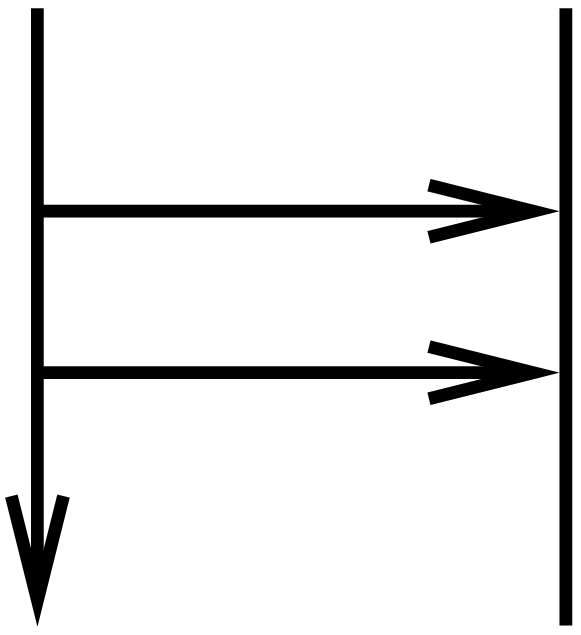}}& 
\scalebox{.12}{\psfig{figure=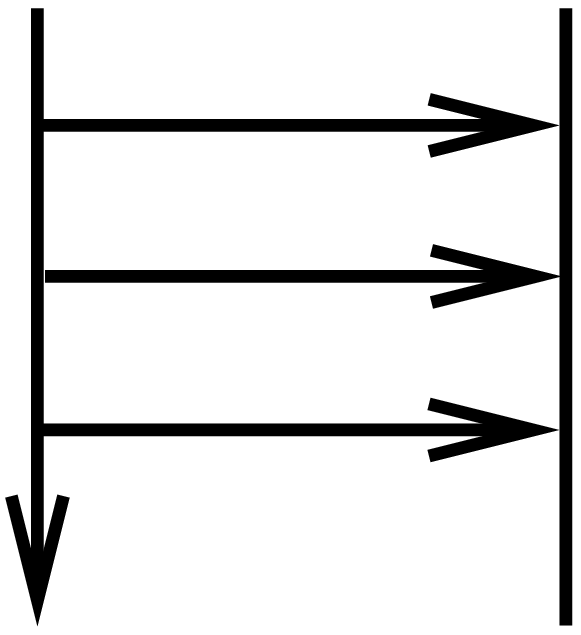}}& \cdots \\ \hline
FYL & \cdots & \scalebox{.12}{\psfig{figure=fsrneq2noy.eps}} & 
\scalebox{.12}{\psfig{figure=fsrneq1noy.eps}} &
\scalebox{.12}{\psfig{figure=fsrneq0noy.eps}}& 
\scalebox{.12}{\psfig{figure=fsrneqneg1noy.eps}} & 
\scalebox{.12}{\psfig{figure=fsrneqneg2noy.eps}}& 
\scalebox{.12}{\psfig{figure=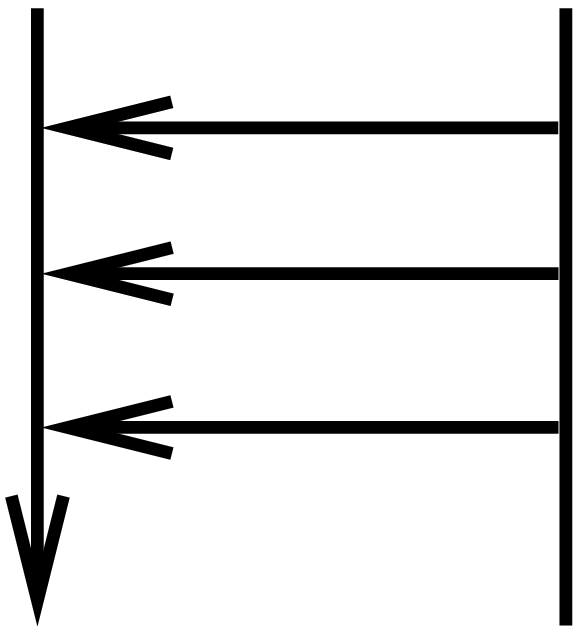}}& \cdots \\ \hline
\end{array}
\]
\caption{Twist Sequences, Type $OYZ$,$EYZ$, $FYZ$} \label{regtwist}
\end{figure}

\subsection{Geometric Characterization} The proof of Theorem \ref{derivthm} requires several lemmas. These lemmas roughly describe the geometric realization of differentiation and antidifferentiation for each of Kauffman and GPV finite-type.

The trick that is used in the proof of the first lemma and the proof of Observation 3 is just the rewriting of the RII move in terms of its Gauss diagram notation.  This is given below for the readers convenience.
\[
\xymatrix{
\scalebox{.15}{\psfig{figure=r2left.eps}} \ar[r]^{\text{RII}} & \ar[l] \scalebox{.15}{\psfig{figure=r2right.eps}} \ar@{<=>}[r] &
\scalebox{.2}{\psfig{figure=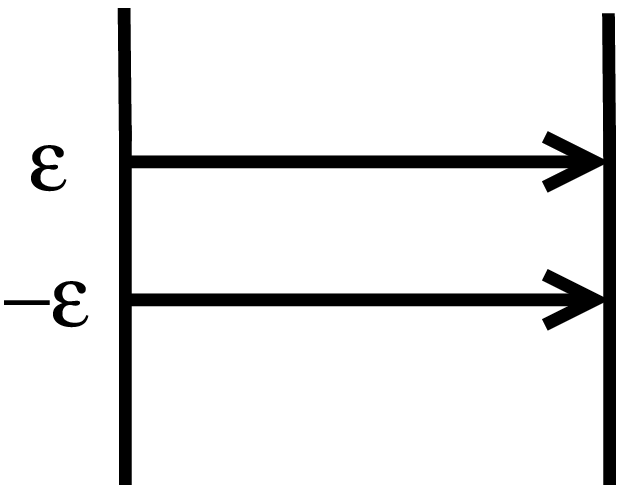}} \ar[r]^{RII} & \ar[l] \scalebox{.2}{\psfig{figure=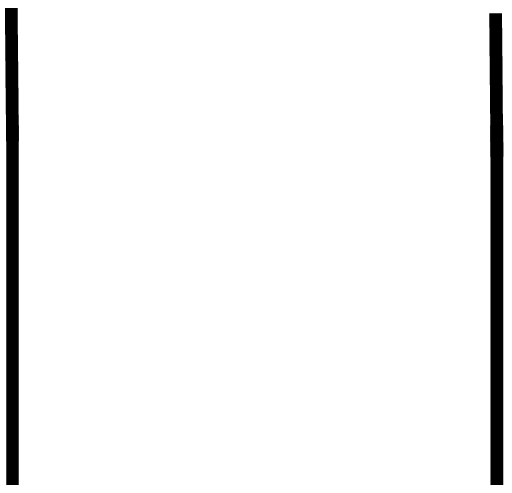}} 
}
\]
\begin{lemma}[Differentiation Lemma I] \label{difflemm1} Let $\alpha \in \mathbb{Z}^+$, $\nu \in \mathbb{Z}_2$, $k \in \mathbb{Z}^{\nu}$. Then for every regular twist sequence $\Phi: \mathbb{Z} \to \mathscr{K}$ of type $XYZ$ and invariant $v:\mathscr{K} \to \mathbb{Q}$, we have $(\partial^{\nu \alpha} v \circ \Phi)(k)=v (K_{\bullet})$, where $K_{\bullet}$ coincides with every element of the image of $\Phi$ outside $(A,A')$ and inside $(A,A')$, we have:
\newline
\[
 K_{\bullet}=\begin{array}{c}
\scalebox{.2}{\psfig{figure=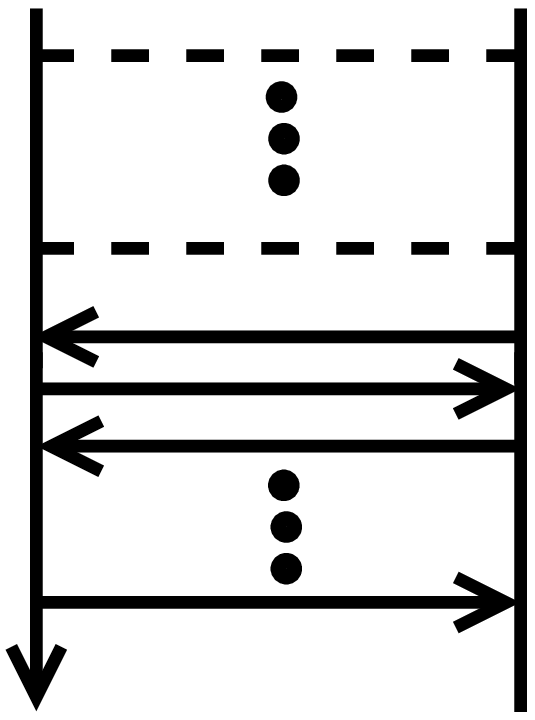}}  
\end{array}
\]
The chords and arrows satisfy the following properties. 
\begin{enumerate}
\item The number of chords in $(A,A')$ is $\alpha$. Moving from left to right in the interval $A$, the sign of the $k-$th chord, $1 \le k \le \alpha$, is $(-1)^{k-1}\varepsilon_X$, where $\varepsilon_L=-$ and $\varepsilon_R=+$.
\item The number of arrows is $\nu\cdot(2k-1)+\alpha$ if $X=O$ and $\nu \cdot 2k+\alpha$ if $X=E$. The first arrow in $(A,A')$ points in the opposite direction of Z if $\alpha$ is odd and in the same direction if $\alpha$ is even.
\end{enumerate} 
\end{lemma}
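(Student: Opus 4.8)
The plan is to induct on $\alpha$, with the base case $\alpha = 0$ being the statement of Lemma~\ref{discderiv} for $\alpha=0$, namely that $(\partial^{\nu \cdot 0} v\circ\Phi)(k) = (v\circ\Phi)(k) = v(\Phi(k))$; here $K_\bullet = \Phi(k)$ has no chords in $(A,A')$ and the arrow count ($\nu(2k-1)$ or $\nu\cdot 2k$) is just the number of arrows of $\Phi(k)$ inside the proper pair, which one reads directly off the rows $OYZ$/$EYZ$ of Figure~\ref{regtwist}. (One should check the sign/direction conventions in (1) and (2) against Figure~\ref{regtwist} at $\alpha=0$ as a sanity check: with $\alpha=0$ even, the first arrow in $(A,A')$ should point in the same direction as $Z$, which is indeed how the rows are drawn.)

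For the inductive step I would use the defining relation for a singular chord from Table~\ref{crossdict}, read on Gauss diagrams: a chord with sign $\varepsilon$ equals $\varepsilon\cdot(\text{one arrow direction}) - \varepsilon\cdot(\text{the other})$. The key geometric observation is that the discrete derivative $\partial^{\nu}$ applied to $v\circ\Phi$ at a point is precisely the operation of introducing one new chord at the end of the twist region: $(\partial^+ v\circ\Phi)(k) = v(\Phi(k+1)) - v(\Phi(k))$, and the two diagrams $\Phi(k+1),\Phi(k)$ differ inside $(A,A')$ exactly by the two resolutions of a singular chord (up to an RII move, which is where the ``rewriting RII on Gauss diagrams'' trick quoted before the lemma is used — the extra pair of arrows created/cancelled by passing between adjacent lattice members is an RII pair). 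Thus $(\partial^{\nu} v\circ\Phi)(k)$ equals $\pm v$ of a diagram with one more chord; iterating $\alpha$ times, $(\partial^{\nu\alpha} v\circ\Phi)(k) = (\partial^{\nu\alpha-\nu}(\partial^\nu v\circ\Phi))(k)$ and the inner function $\partial^\nu v\circ\Phi$ is, up to the bookkeeping of signs, itself of the form $v' \circ \Phi'$ for a shorter twist sequence $\Phi'$ carrying one extra frozen chord — so the inductive hypothesis applies with $\alpha$ replaced by $\alpha-1$.

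The bookkeeping in (1) and (2) is then a matter of tracking what each application of $\partial^\nu$ does. Each new chord is inserted at one end of the chord block in $A$ (and correspondingly in $A'$), so the chords accumulate in order and the inductive hypothesis already tells us chord $j$ for $j<k$ has sign $(-1)^{j-1}\varepsilon_X$; the new chord, being the resolution-difference of the $(\alpha)$-th twist, picks up sign $(-1)^{\alpha-1}\varepsilon_X$ — this is the only place the alternating sign and the $\varepsilon_L=-,\varepsilon_R=+$ convention enter, and it should fall out of carefully comparing the $k$ vs $k\!+\!1$ (or $k\!-\!1$) diagrams in the relevant row of Figure~\ref{regtwist}. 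For the arrow count: passing from $\Phi(k)$ to the diagram with one resolved chord removes the ``$\nu$'' worth of arrow(s) that distinguished consecutive lattice members (there are $2$ arrows per unit step in the $E$ case, $2$ again but with the odd offset $2k-1$ in the $O$ case) and replaces it by one chord, then resolving that chord reintroduces arrows; summing over the $\alpha$ steps gives $\nu(2k-1)+\alpha$ resp. $\nu\cdot 2k + \alpha$, and the parity of $\alpha$ flips the direction of the innermost arrow each time, giving the ``opposite of $Z$ iff $\alpha$ odd'' clause.

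I expect the main obstacle to be not the induction itself but making the diagram manipulations rigorous: specifically, verifying that the two adjacent lattice members $\Phi(k)$ and $\Phi(k\pm1)$ really do differ by a single singular-chord resolution \emph{after} performing the requisite RII moves, uniformly across all eight types $XYZ$ and all signs of $k$ (including the sign change of $\varepsilon$ as $k$ crosses $0$ in Figure~\ref{regtwist}), and that the inserted chord lands at the correct end of the block so that the ordering claims in (1) hold. This is essentially a careful case analysis on the rows of Figure~\ref{regtwist}, and the ``rewriting RII in Gauss-diagram notation'' picture quoted just before the lemma is the tool that makes each case routine; I would present one representative case ($X=E$, say $EYR$ with $k\ge 1$) in full and indicate that the others are obtained by symmetry (reversing $Z$ reflects the picture, swapping $O\leftrightarrow E$ shifts the arrow parity, changing the sign of $k$ reflects about $k=0$ together with the $\varepsilon$-flip).
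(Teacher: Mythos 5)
Your proposal matches the paper's proof in all essentials: an induction on $\alpha$ in which the passage between adjacent lattice members is realized by inserting an RII pair of arrows so that the two diagrams differ in exactly one crossing, the difference then being recognized as the defining relation of a chord from Table \ref{crossdict}, with the direction-flip of the first arrow and the alternating chord signs tracked exactly as you describe, and with one representative type worked out and the remaining $XYZ$ cases handled by symmetry. The only (cosmetic) difference is that you peel off the innermost derivative, writing $\partial^{\nu\alpha}=\partial^{\nu(\alpha-1)}\circ\partial^{\nu}$, so your inductive hypothesis must be phrased to tolerate a frozen chord in the otherwise fixed part of the diagram (i.e.\ $v$ extended linearly to singular diagrams), whereas the paper writes $\partial^{\alpha}=\partial\circ\partial^{\alpha-1}$, applies the hypothesis at $k$ and $k+1$, and only then performs the RII/chord-resolution step --- the same argument in a slightly different order.
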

\begin{proof} The proof is by induction on $\alpha$. Assume $\nu=+$ and $X=O$. The initial step, $\alpha=1$, and the induction step essentially follow from the same kind of diagram manipulation. We will therefore justify the induction step only.  Suppose the theorem is true up to $\alpha-1$.  Then we have:
\begin{eqnarray*}
(\partial^{\alpha} v \circ \Phi)(k) &=& \partial(\partial^{\alpha-1} v \circ \Phi)(k) \\
                                    &=& (\partial^{\alpha-1}v \circ \Phi)(k+1)-(\partial^{\alpha-1} v \circ \Phi)(k) \\
                                    &=& v \left(\begin{array}{rl} & \alpha-1 \\ & \\ \raisebox{-.1in}[0pt]{\scalebox{.22}{\psfig{figure=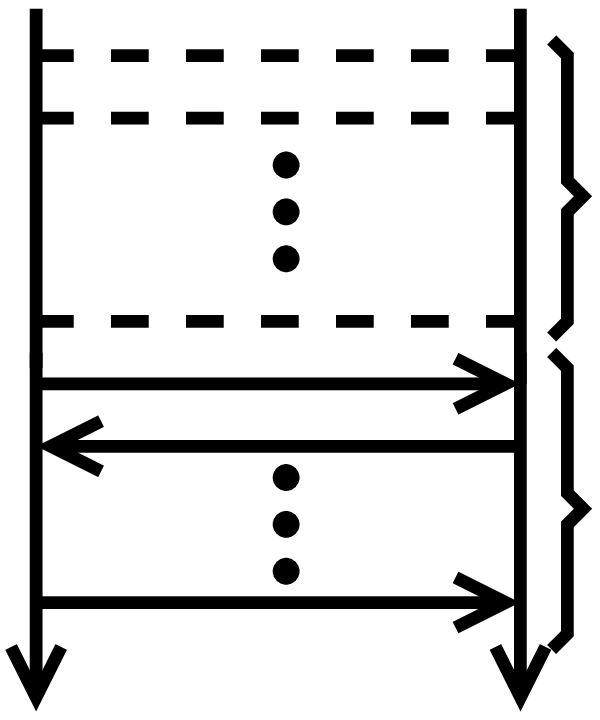}}} &  2k+\alpha \\ \end{array}\right)-v \left(\begin{array}{rl} & \alpha-1 \\ & \\ \raisebox{-.1in}[0pt]{\scalebox{.22}{\psfig{figure=derivfig.eps}}} &  2k-2+\alpha \\ \end{array}\right)\\
&=& v \left(\begin{array}{rl} & \alpha-1 \\ & \\ \raisebox{-.1in}[0pt]{\scalebox{.22}{\psfig{figure=derivfig.eps}}} &  2k+\alpha \\ \end{array}\right)-v \left(\begin{array}{rl} & \\ & \\ \raisebox{-.1in}[0pt]{\scalebox{.22}{\psfig{figure=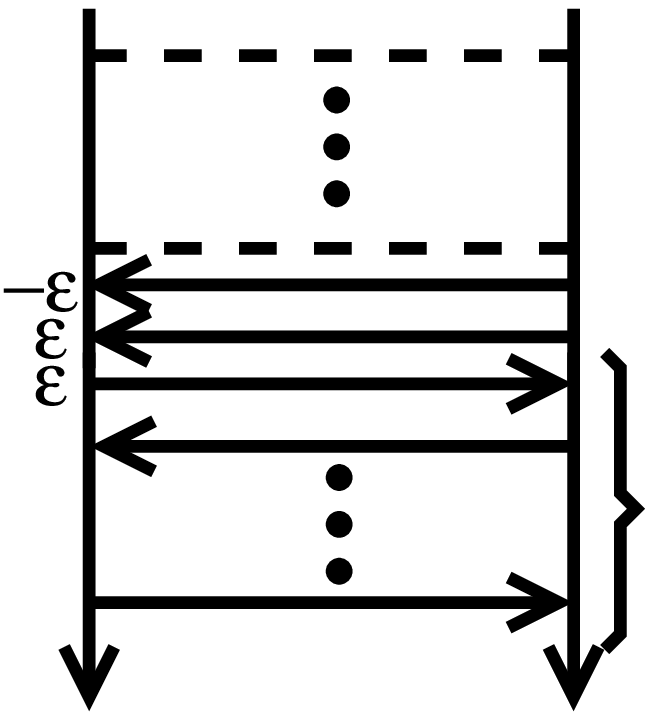}}} &  2k-2+\alpha \\ \end{array}\right)\\
&=& v \left(\begin{array}{rl} & \alpha-1+1 \\ & \\ \raisebox{-.1in}[0pt]{\scalebox{.22}{\psfig{figure=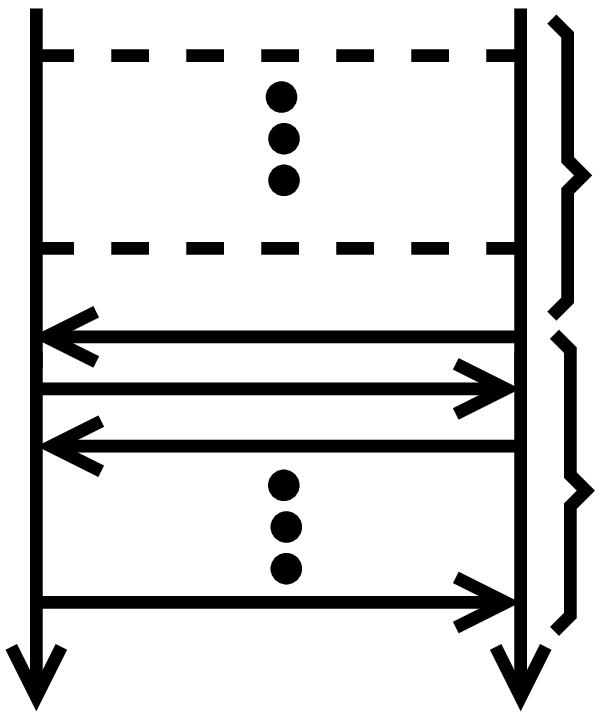}}} &  2k-1+\alpha \\ \end{array}\right)\\                               
\end{eqnarray*}
This follows from the use of an RII move and the definition of a chord.  

From this computation, we see that after a single derivative, the direction of the first arrow changes.  Thus, after an even number of derivatives, the direction will be the same.  An odd number of derivatives always changes the direction.

Now, we see from Table \ref{crossdict} that the sign of a chord is determined by the direction of the arrow signed with a $+$. The computation above shows that the signs of the arrows don't change, but the direction changes.  Hence the sign of each new chord will be different from the previous one.  Table \ref{crossdict} implies that we should set $\varepsilon_L=-$ and $\varepsilon_R=+$. The result follows by induction.  The other cases may be proved similarly.   
\end{proof}
\begin{lemma}[Differentiation Lemma II]Let $\alpha \in \mathbb{Z}^+$, $\nu \in \mathbb{Z}_2$, $k \in \mathbb{Z}^{\nu}$. For every fractional twist sequence $\Phi:\mathbb{Z} \to \mathscr{K}$ of type $FYZ$, we have  $(\partial^{\nu \alpha} v \circ \Phi)(k)=\nu^{\alpha} v(K_{\circ})$, where $K_{\circ}$ is a long virtual knot having $\alpha$ dashed arrows and $k$ regular arrows inside the proper pair of $\Phi$. The arrows and their signs satisfy:
\begin{enumerate}
\item Every dashed or whole arrow is signed $\nu$.
\item Every dashed or whole arrow is directed $Z$.
\end{enumerate}   
\end{lemma}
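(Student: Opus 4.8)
The plan is to argue by induction on $\alpha$, in close parallel with the proof of Differentiation Lemma \ref{difflemm1}, but with the semi-virtual relation $\text{(dashed)}=\text{(solid)}-\text{(no arrow)}$ of Table \ref{crossdict} playing the role that the $\mathrm{RII}$ move played there. Fix a fractional twist sequence $\Phi$ of type $FYZ$ and fix $\nu\in\mathbb{Z}_2$. The first point to record is that, by the definition of $FYZ$ (Figure \ref{regtwist}), for every $k\in\mathbb{Z}^{\nu}$ each whole arrow of $\Phi(k)$ lying in the proper pair $(A,A')$ is signed $\nu$ and directed $Z$; furthermore $\Phi(k)$ differs from its neighbour on the side of $(A,A')$ away from $\vec 0$ — namely $\Phi(k+1)$ when $\nu=+$ and $\Phi(k-1)$ when $\nu=-$ — exactly by the insertion of one more such arrow, the rest of the Gauss diagram being unchanged.

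For the base case $\alpha=1$ with $\nu=+$ we have $(\partial^{+1}v\circ\Phi)(k)=v(\Phi(k+1))-v(\Phi(k))$; since $\Phi(k+1)$ is $\Phi(k)$ with one extra whole arrow in $(A,A')$, the semi-virtual relation gives $\Phi(k+1)-\Phi(k)=K_{\circ}$ in $\mathbb{Z}[\mathscr{K}]$, where $K_{\circ}$ agrees with $\Phi(k)$ everywhere except that this extra arrow is now dashed. Thus $K_{\circ}$ has $k$ whole arrows and one dashed arrow in $(A,A')$, all signed $+=\nu$ and directed $Z$, and $(\partial^{+1}v\circ\Phi)(k)=v(K_{\circ})=\nu^{1}v(K_{\circ})$. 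For $\nu=-$ the computation is $(\partial^{-1}v\circ\Phi)(k)=v(\Phi(k))-v(\Phi(k-1))$; now it is $\Phi(k-1)$ that carries the extra arrow, so the semi-virtual relation yields $\Phi(k)-\Phi(k-1)=-K_{\circ}$, producing the factor $\nu^{1}=-1$, with all arrows now signed $-=\nu$.

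For the inductive step, assume the statement for $\alpha-1$. Then $(\partial^{\nu(\alpha-1)}v\circ\Phi)(j)=\nu^{\alpha-1}v(K_{\circ}(j))$ for all $j\in\mathbb{Z}^{\nu}$, where $K_{\circ}(j)$ carries a fixed configuration of $\alpha-1$ dashed arrows (all signed $\nu$, directed $Z$) together with $j$ whole arrows (likewise) in $(A,A')$, and — this is the key — the dashed arrows are left undisturbed as $j$ varies, so $K_{\circ}(j)$ still differs from its neighbour away from $\vec 0$ precisely by one extra whole arrow. Applying $\partial^{\nu}$ once more and repeating the base-case manipulation, now feeding the semi-virtual relation that single extra whole arrow, gives $(\partial^{\nu\alpha}v\circ\Phi)(k)=\nu\cdot\nu^{\alpha-1}v(K_{\circ})=\nu^{\alpha}v(K_{\circ})$, where $K_{\circ}$ has $\alpha$ dashed and $k$ whole arrows, all signed $\nu$ and directed $Z$. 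This closes the induction; the remaining cases of $Y$ and $Z$ are handled identically.

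The step I expect to need the most care is exactly this bookkeeping: verifying that after $j$ derivatives the diagrams $K_{\circ}(\cdot)$ really do form a ``fractional-twist-like'' family in the remaining whole arrow — i.e. that the $\alpha-1$ already-dashed arrows occupy positions (say, nearest one end of the proper pair) that subsequent differentiation never touches — and tracking that each application of $\partial^{-}$, via the convention $\partial^{-}\Phi(z)=\Phi(z)-\Phi(z-1)$, contributes exactly one factor of $\nu$, so that these accumulate to $\nu^{\alpha}$ and nothing more. One should also note the harmless abuse of notation that, when $\nu=-$, ``$k$ whole arrows'' means $|k|$ of them. Everything else is a direct transcription of the proof of Lemma \ref{difflemm1}.
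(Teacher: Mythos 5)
Your proposal is correct and follows essentially the same route as the paper's proof: induction on $\alpha$, with the defining relation for semi-virtual crossings converting the one-extra-whole-arrow difference $\Phi(k+1)-\Phi(k)$ (resp.\ $\Phi(k-1)-\Phi(k)$) into a single new dashed arrow, and each application of $\partial^-$ contributing a factor of $-1$ that accumulates to $\nu^{\alpha}$. Your added bookkeeping (base case, and checking that the already-dashed arrows are undisturbed as the whole-arrow count varies) is exactly what the paper leaves implicit in its figures.
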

\begin{proof} Once again, only the induction step in the case $\nu=+$ is verified.  The other cases follow similarly.  For example, consider the sequence of type $FSR$.
\begin{eqnarray*}
(\partial^{\alpha+1} v \circ \Phi)(k) &=& (\partial^{\alpha} v \circ \Phi) (k+1)-(\partial^{\alpha} v \circ \Phi)(k) \\
                                      &=& v \left(\begin{array}{rl} & \alpha \\ & \\ \raisebox{-.25in}[0pt]{\scalebox{.25}{\psfig{figure=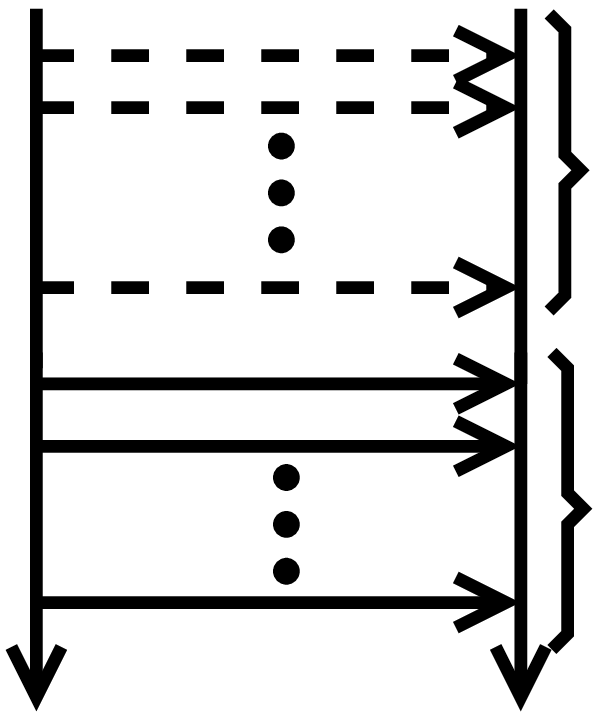}}} & k+1 \\ \end{array}\right)-v \left(\begin{array}{rl} & \alpha \\ & \\ \raisebox{-.25in}[0pt]{\scalebox{.25}{\psfig{figure=fracfig1.eps}}} &  k \\ \end{array}\right)\\
                                      &=&  v \left(\begin{array}{rl} & \alpha+1 \\ & \\ \raisebox{-.25in}[0pt]{\scalebox{.25}{\psfig{figure=fracfig1.eps}}} & k \\ \end{array}\right)
\end{eqnarray*}
The last equality follows from the defining relation for semi-virtual crossings. Note that $\nu^{\alpha}=1$ in this case.

When $\nu=-$, $k\le 0$ and derivatives are of the form:
\begin{eqnarray*}
(\partial^{-\alpha} v \circ \Phi)(k)&=&(\partial^{-\alpha+1}v \circ \Phi)(k)-(\partial^{-\alpha+1} v \circ \Phi)(k-1)\\
&=& -((\partial^{-\alpha+1}v \circ \Phi)(k-1)-(\partial^{-\alpha+1} v \circ \Phi)(k)
\end{eqnarray*}
This case now follows from an induction argument similar to the $\nu=+$ case.
\end{proof}
\begin{lemma} [Integration Lemma I] \label{int1lemm} For every long virtual knot diagram $K$ with $m>0$ chords, there is a regular twist lattice $\Phi_K: \mathbb{Z}^m \to \mathscr{K}$, $\alpha\in (\mathbb{Z}^+)^m$, and $\nu\in\mathbb{Z}_2^m$ such that $\partial^{\nu\alpha} \Phi_K(\vec{0})=K$.
\end{lemma}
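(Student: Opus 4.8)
The plan is to run the Differentiation Lemma I backwards, creating one twist coordinate for each chord of $K$. Write $c_1,\dots,c_m$ for the chords of the singular Gauss diagram $K$, with signs $\varepsilon_1,\dots,\varepsilon_m\in\{+,-\}$, and let $F$ be the ``whole arrow'' part of $K$, i.e.\ the Gauss diagram obtained from $K$ by deleting all chords. For each $i$ the two endpoints of $c_i$ are distinct points of $\mathbb{R}$; I would choose a small open interval around each of them containing no other arrow or chord endpoint, and let $A_i<A_i'$ be these two intervals in the order induced by $\mathbb{R}$. Then $(A_i,A_i')$ is a proper pair — the only arc meeting $A_i\cup A_i'$ is $c_i$ itself, whose endpoints lie one in $A_i$ and one in $A_i'$, and not both in the same interval — and the $m$ pairs are pairwise disjoint. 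Define $\Phi_K:\mathbb{Z}^m\to\mathscr{K}$ by declaring that, outside $\bigcup_i(A_i\cup A_i')$, every $\Phi_K(\vec z)$ equals $F$, and that inside $(A_i,A_i')$ the diagram $\Phi_K(\vec z)$ follows the row $OSR$ of Figure \ref{regtwist} at index $z_i$ if $\varepsilon_i=+$, and the row $OSL$ at index $z_i$ if $\varepsilon_i=-$. Each composition $\Phi_K\circ\iota_i$ is then a one-dimensional regular twist sequence of type $OSR$ or $OSL$ with proper pair $(A_i,A_i')$, so $\Phi_K$ is a regular twist lattice of dimension $m$.

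Next I would fix the exponents: take $\alpha=(1,\dots,1)\in(\mathbb{Z}^+)^m$ and $\nu=(+,\dots,+)\in\mathbb{Z}_2^m$, so that the derivative is evaluated at $\vec z=\vec 0$, i.e.\ at $k=0$ in each coordinate. The reason for the choices $X=O$, $\nu_i=+$, $k=0$ is bookkeeping. By item (2) of the Differentiation Lemma I, one derivative in coordinate $i$ produces $\nu_i(2\cdot 0-1)+1=0$ arrows inside $(A_i,A_i')$, so it introduces a chord and no spurious whole arrows; by item (1), the single chord it produces has sign $\varepsilon_R=+$ if the row is of type $R$ and $\varepsilon_L=-$ if the row is of type $L$, which matches $\varepsilon_i$. (Concretely, $\Phi_K(\vec 0)$ is just the honest long virtual knot obtained from $K$ by replacing each chord $c_i$ by the whole arrows occurring in the $k=0$ diagram of its twist row, placed inside $(A_i,A_i')$.)

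Finally I would compute $\partial^{\nu\alpha}\Phi_K(\vec 0)$ by iterating the Differentiation Lemma I one coordinate at a time. Applying $\partial^{+1}$ in the first coordinate (with all other coordinates held at $0$, their twist pictures absorbed into the data ``outside $(A_1,A_1')$'') replaces the $k=0$ picture in $(A_1,A_1')$ by a single chord of sign $\varepsilon_1$ and changes nothing else. Resolving $c_1$ via Table \ref{crossdict}, the result is a $\mathbb{Z}$-linear combination of regular twist lattices in the remaining $m-1$ coordinates, each with a slightly enlarged fixed part, so the Differentiation Lemma I applies again in coordinate $2$, and so on; since the partial discrete derivatives commute, the order is irrelevant. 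After all $m$ differentiations one is left with the singular Gauss diagram whose whole-arrow part is $F$ and whose chords are $c_1,\dots,c_m$ with signs $\varepsilon_1,\dots,\varepsilon_m$ sitting in $(A_1,A_1'),\dots,(A_m,A_m')$ — that is, $K$ itself, with no extra arrows and no extraneous overall sign. Hence $\partial^{\nu\alpha}\Phi_K(\vec 0)=K$.

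The main obstacle here is not any single computation but keeping the bookkeeping honest: confirming that the tiny intervals genuinely form proper pairs (so that the one-dimensional restrictions $\Phi_K\circ\iota_i$ are bona fide regular twist sequences), and justifying the iterated use of the Differentiation Lemma I once chords from earlier coordinates are present. The latter is legitimate precisely because everything is $\mathbb{Z}$-linear and the chord relations of Table \ref{crossdict} are local, so a ``twist lattice with chords in its fixed part'' is literally a $\mathbb{Z}$-combination of ordinary regular twist lattices. One should also double-check that the choices $X=O$ and $\nu=+$ (hence evaluation at the origin) are forced by the demand that no stray whole arrows be introduced, so that the output is exactly $K$ and not $K$ plus correction terms.
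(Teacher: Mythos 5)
Your proposal is correct and follows essentially the same route as the paper: pick disjoint proper pairs around the chords, install a type-$OYZ$ twist sequence in each pair with $Z=R$ or $Z=L$ according to the chord sign, take $\alpha=(1,\ldots,1)$, $\nu=(+,\ldots,+)$, and invoke Differentiation Lemma I together with commutativity of the partial discrete derivatives. Your extra bookkeeping (the count $\nu(2\cdot 0-1)+1=0$ of stray whole arrows at $k=0$ and the linearity argument justifying the coordinate-by-coordinate iteration) only spells out details the paper leaves implicit.
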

\begin{proof} Label the chords $1, \ldots , m$.  Let $\eta_i$ be the sign of the $i$-th chord.  There exists a collection $\{(A_i,A_i'): 1 \le i \le \alpha\}$ of disjoint proper pairs in the Gauss diagram of $K$ such that the $i$-th chord is in the pair $(A_i,A_i')$.  For each pair $(A_i,A_i')$, we create a regular twist lattice of type $OYZ$.  The choice of $Y$ is immaterial. If $\eta_i=+$, choose $Z=R$ and if $\eta_i=-$, choose $Z=L$. This choice of signs is consistent with the choice in Table \ref{crossdict}.  Denote by $\Phi_K:\mathbb{Z}^{m} \to \mathscr{K}$ the regular twist lattice so obtained.  Let $\alpha=(1,\ldots,1)$ and $\nu=(+,\ldots,+)$.  Then by Lemma \ref{difflemm1} and the commutativity of partial derivatives, we have $\partial^{\nu \alpha} \Phi_K(\vec{0})=K$.
\end{proof}
\begin{lemma}[Integration Lemma II] \label{int2lemm} For every dashed arrow diagram $D$ with $m>0$ arrows, there is a fractional twist lattice $\Phi_D:\mathbb{Z}^m \to \mathscr{K}$, $\alpha \in (\mathbb{Z}^+)^m$, $\nu\in \mathbb{Z}_2^m$ such that $I \partial^{\nu \alpha} \Phi_D(\vec{0})=(-1)^{\#(-)} \cdot D$, where $\#(-)$ is the number of arrows of $D$ signed $-$.
\end{lemma}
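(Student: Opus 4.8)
The plan is to mimic the proof of Integration Lemma I, replacing regular twist lattices of type $OYZ$ by fractional twist lattices of type $FYZ$ and using Differentiation Lemma II in place of Differentiation Lemma I. Label the arrows of $D$ by $1,\ldots,m$; let $\eta_i\in\{+,-\}$ be the sign of the $i$-th arrow and let $Z_i\in\{L,R\}$ record its direction. First I would choose a collection $\{(A_i,A_i'):1\le i\le m\}$ of pairwise disjoint proper pairs in the underlying line of $D$ so that the two endpoints of the $i$-th arrow lie in $A_i$ and $A_i'$ respectively, with $A_i<A_i'$ matching the left--right position of those endpoints; shrinking each $A_i,A_i'$ to a small neighborhood of a single endpoint makes the properness condition automatic and lets the pairs be chosen disjoint. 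For each $i$ build the one-dimensional fractional twist sequence of type $FY_iZ_i$ adapted to $(A_i,A_i')$ (with, say, $Y_i=S$, which is immaterial when only one arrow is involved), whose underlying diagram agrees with $D$ outside the pairs and is trivial at $k_i=0$. Since the pairs are disjoint, assembling these one-dimensional sequences produces a genuine fractional twist lattice $\Phi_D:\mathbb{Z}^m\to\mathscr{K}$.

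Next, set $\alpha=(1,\ldots,1)$ and $\nu=(\eta_1,\ldots,\eta_m)$. Applying Differentiation Lemma II one coordinate at a time---legitimate because partial discrete derivatives commute and, the proper pairs being disjoint, differentiating in the $i$-th coordinate affects only the $i$-th pair (all other pairs are empty at $\vec0$)---yields
\[
\partial^{\nu\alpha}\Phi_D(\vec0)=\Big(\prod_{i=1}^m \nu_i^{\alpha_i}\Big)\,K_\circ=\Big(\prod_{i=1}^m \eta_i\Big)\,K_\circ,
\]
where $K_\circ$ is the long virtual knot diagram that is trivial outside the pairs and carries inside $(A_i,A_i')$ a single dashed arrow signed $\eta_i$ and directed $Z_i$. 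By construction the arrow pattern, the signs, and the directions of $K_\circ$ coincide with those of $D$. Finally $\prod_i\eta_i=(-1)^{\#(-)}$, since exactly the arrows signed $-$ contribute a factor $-1$.

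It remains to identify $I(K_\circ)$. Expanding each dashed arrow of $K_\circ$ by the semi-virtual relation of Table \ref{crossdict} writes $K_\circ$ in $\mathbb{Z}[\mathscr{K}]$ as $\sum_{S\subseteq\{1,\ldots,m\}}(-1)^{m-|S|}G_S$, where $G_S$ is the honest Gauss diagram retaining exactly the (solid) arrows indexed by $S$; since $m-|S|$ is the number of deleted arrows, comparison with the explicit formula for $I^{-1}$ in Theorem \ref{iisom} shows this is precisely $I^{-1}(D)$ (identifying $D\in\mathscr{A}$ with the dashed arrow picture shared by $K_\circ$). Hence $I(K_\circ)=D$, and
\[
I\,\partial^{\nu\alpha}\Phi_D(\vec0)=\Big(\prod_{i=1}^m \eta_i\Big)I(K_\circ)=(-1)^{\#(-)}\,D,
\]
which is the desired identity.

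The only delicate point is the bookkeeping in the middle step: one must verify that the $FYZ$ pattern, after a single discrete derivative in each coordinate, produces a dashed arrow with exactly the prescribed direction and sign and with endpoints in the prescribed intervals, so that the combinatorial type of $K_\circ$ is genuinely that of $D$ and not a mirror, reverse, or sign-flipped variant. This is where the choices of $Z_i$ (direction), of $\nu_i=\eta_i$ (sign), and of the ordering $A_i<A_i'$ must be reconciled with the conventions in Figure \ref{regtwist} and in the statement of Differentiation Lemma II; everything else is a direct transcription of the argument for Integration Lemma I.
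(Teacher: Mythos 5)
Your overall architecture coincides with the paper's: the paper also places one disjoint proper pair per arrow, builds a fractional twist lattice, takes $\alpha=(1,\dots,1)$ with $\nu_i$ the sign of the $i$-th arrow, and then identifies $\partial^{\nu\alpha}\Phi_D(\vec 0)$ with $(-1)^{\#(-)}I^{-1}(D)$ --- it does this by expanding the derivative through Lemma \ref{discderiv} as a signed sum over the lattice points $(\nu_1 j_1,\dots,\nu_m j_m)$, $j_i\in\{0,1\}$, and matching that sum against the explicit formula for $I^{-1}$ in Theorem \ref{iisom}; your version (Differentiation Lemma II coordinate-by-coordinate, then expanding the dashed arrows of $K_\circ$ by the semi-virtual relation and comparing with $I^{-1}$) is the same computation organized slightly differently, and your coefficient $\prod_i\eta_i=(-1)^{\#(-)}$ and the identification $I(K_\circ)=D$ are correct and consistent with the paper's later use of ``$I(D)=D$'' for all-dashed diagrams.

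The genuine gap is the point you flag at the end and then leave unreconciled: the assignment of the twist-sequence type to a negatively signed arrow. You take the type $FY_iZ_i$ with $Z_i$ equal to the arrow's own direction, leaning on the literal wording of Differentiation Lemma II (``every dashed arrow is directed $Z$''). The paper's proof instead uses a table that \emph{reverses} the letter when $\eta_i=-$ (e.g.\ a right-pointing arrow with $\nu_i=-$ gets the sequence $FSL$), and this is not cosmetic: in the conventions of Figure \ref{regtwist} a type-$FYZ$ fractional sequence at negative indices carries arrows signed $-$ and pointing \emph{opposite} to $Z$ (the $FYL$ row is the $FYR$ row with the index negated), so a negative discrete derivative at $0$ produces a dashed arrow signed $-$ but directed opposite to $Z$. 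With your choice of $Z_i$, every $-$-signed arrow of $D$ would therefore be realized with reversed direction, and you would obtain $(-1)^{\#(-)}D'$ for a diagram $D'$ differing from $D$, not the claimed identity; since Theorem \ref{derivthm} needs \emph{every} signed, directed dashed diagram to be realized exactly, this direction bookkeeping is precisely the content of the lemma and cannot be deferred to ``reconciliation with the conventions.'' (The discrepancy traces to an imprecision in the statement of Differentiation Lemma II for $\nu=-$, which the paper's table silently corrects; your proof needs either that correction in the construction of $\Phi_D$ or a corrected statement of the differentiation lemma.)
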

\begin{proof} Assign to $D$ a fractional twist lattice as follows.  Label the arrows of $D$ as $1,\ldots,m$.  Let $\nu_i$ be the sign of the arrow labeled $i$.  Choose a collection of disjoint proper pairs $\{(A_1,A_1'),\ldots,(A_m, A_m')\}$ such that $(A_i,A_i')$ contains the arrow labeled $i$.  Assign a sequence $FSZ$ to each proper pair according to the following table.
\[
\begin{tabular}{c|c|c|}
$i$ & $L$ & $R$ \\ \hline
$+$ & $L$ & $R$ \\ \hline
$-$ & $R$ & $L$ \\ \hline
\end{tabular}
\]
For example, if the arrow points right and $\nu_i=-$, assign the fractional twist sequence $FSL$.  Performing this for every proper pair gives a fractional twist lattice $\Phi_D:\mathbb{Z}^m \to \mathscr{K}$.  Let $\alpha=\nu \cdot (1, \ldots,1)$. By Lemma \ref{discderiv} and the commutativity of partial derivatives, we have:
\begin{eqnarray*}
\partial^{\nu\alpha} \Phi_D(\vec{0}) &=& \sum_{j_1=0}^1 \cdots \sum_{j_m=0}^1 (-1)^{m+\sum_i (1/2)(\nu_i-1)+j_i} \Phi_D(\nu_1 j_1,\ldots,\nu_n j_n) \\
&=& (-1)^{\sum_i (1/2)(\nu_i-1)} \sum_{j_1=0}^1 \cdots \sum_{j_n=0}^1 (-1)^{m-\sum j_i} \Phi_D(\nu_1 j_1,\ldots,\nu_m j_m)
\end{eqnarray*}
Consider the vectors $( j_1,\ldots, j_m)$.  The collection of these is in one-to-one correspondence with the set $\{0,1\}^m$.  The presence of a 1 corresponds to the presence of an arrow and the presence of a 0 corresponds to the absence of an arrow.  Thus, the sum can be considered over $D' \subset D$, where $D'$ is obtained from $D$ by deleting some number of arrows.  Checking the coefficient, it is seen that $\partial^{\nu\alpha} \Phi_D (\vec{0})=(-1)^{\#(-)} \cdot I^{-1}(D)$, as desired.
\end{proof}
\begin{proof}[Proof of Thm \ref{derivthm}] The proof is essentially the same as Eisermann's result in \cite{MR1997586}.  Suppose $m=1$.  If $v:\mathscr{K} \to \mathbb{Q}$ is a Kauffman finite-type invariant of long virtual knots, then $v(K_{\bullet})=0$ for all $K_{\bullet}$ having more than $n$ double points. For a regular twist lattice $\Phi:\mathbb{Z} \to \mathscr{K}$, we have that $(\partial^{\alpha}v \circ \Phi)(0)=v(K_{\bullet})$, where $K_{\bullet}$ has at least $\alpha$ chords.  If $\alpha>n$, we see that $(\partial^{\alpha} v\circ \Phi)(0)=0$.  Thus, the discrete power series of $v \circ \Phi$ vanishes for all but finitely many terms and we conclude that $v \circ \Phi:\mathbb{Z} \to \mathbb{Q}$ is a polynomial.

For $m>1$, the result follows from the equality of mixed partials.  Let $\alpha \in (\mathbb{Z}^+)^m$. We may write $\partial^{\nu \alpha}=\partial^{\nu_1 \alpha_1} \cdots \partial^{\nu_m \alpha_m}$. It follows from Lemma \ref{difflemm1} that $(\partial^{\nu \alpha} v \circ \Phi)(\vec{0})=v(K_{\bullet})$ where $K_{\bullet}$ has at least $\alpha_1+\alpha_2+\ldots+\alpha_m=|\alpha|$ chords. If $|\alpha|>n$, then the derivative must vanish.  Hence, the discrete power series vanishes for all but finitely many terms. 

Conversely,  let $K_{\bullet}$ be a long virtual knot with $m>n$ chords. By Integration Lemma I, there is a  $\Phi:\mathbb{Z}^m \to \mathscr{K}$, $\alpha \in (\mathbb{Z}^+)^m$, $\nu \in \mathbb{Z}_2^m$, such that $\partial^{\nu\alpha} \Phi(\vec{0})=K_{\bullet}$.  Since $|\alpha|=m>n$, we must have $v(K_{\bullet})=0$.

For the second assertion, the $(\Rightarrow)$ direction follows similarly using Differentiation Lemma II.  To establish the $(\Leftarrow)$ direction, let $D$ be a Gauss diagram having $m>n$ dashed arrows (or equivalently, semi-virtual crossings in the knot projection) and no other arrows of any kind.  By Theorems \ref{iisom} and \ref{GPVuni}, it is sufficient to show that $v(D)=0$.  In this case, $I(D)=D$. By Integration Lemma II, there is a fractional twist lattice $\Phi_D:\mathbb{Z}^m\to \mathscr{K}$, $\alpha \in (\mathbb{Z}^+)^m$, $\nu \in \mathbb{Z}_2^m$ such that $I \circ \partial^{\nu \alpha} \Phi_D(\vec{0})=(-1)^{\#(-)}\cdot D$, where $|\alpha|=m$.  Then:
\[
v(D)=v(I^{-1}(D))=(-1)^{\#(-)} vI^{-1}( I \partial^{\nu\alpha}\Phi_D(\vec{0}))=(-1)^{\#(-)}(\partial^{\nu\alpha} v \circ \Phi_D )(\vec{0})
\]  
Since $|\alpha|>n$, $v(D)=0$.  Thus, $v$ is a GPV finite-type invariant of order $\le n$.
\end{proof}
\section{The Jones-Kauffman Polynomial}
\subsection{Axiomatic definition} The definition of the bracket requires a larger class of objects than $\mathscr{K}$.  Define $\mathscr{E}'$ to be the set of isotopy classes of maps satisfying the following:
\begin{itemize}
\item Embeddings of the form $f:\mathbb{R} \cup S^1 \cup \ldots S^1 \to S^3$, for some finite disjoint union of copies of $S^1$.  Note that there is only one copy of $\mathbb{R}$.
\item The restriction to $\mathbb{R}$ agrees with the standard embedding of the $x$-axis in $S^3=\mathbb{R}^3 \cup \{\infty\}$ outside of some compact set.
\end{itemize}
This collection can be virtualized by considering the collection of regular planar projections of elements of $\mathscr{E}'$ that are embellished at every transversal intersection with a choice of an over crossing, an under crossing, or a four valent graphical vertex (marked with a surrounding circle, as before).  Elements of this set are considered up to equivalence by the three local Reidemeister moves and the four local virtual moves. The totality of equivalence classes is denoted by $\mathscr{E}$.  

When orientation is introduced, a copy of $S^1$ may have any orientation but the immersion of $\mathbb{R}$ must be oriented in the direction of the positive $x$-axis.  This is a standard restriction.

The Kauffman bracket, $<\cdot>:\mathscr{E} \to \mathbb{Q}((A))$ is defined on unoriented diagrams by the following axioms.
\begin{enumerate}
\item $\displaystyle{\left< \begin{array}{c} \scalebox{.15}{\psfig{figure=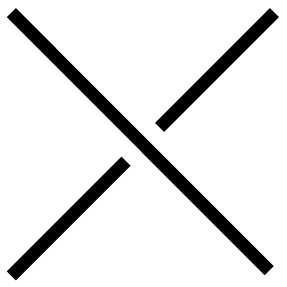}} \end{array} \right>= A \left< \begin{array}{c} \scalebox{.15}{\psfig{figure=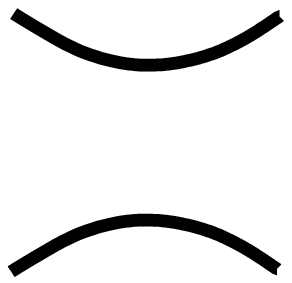}} \end{array}\right>+A^{-1} \left< \begin{array}{c} \scalebox{.15}{\psfig{figure=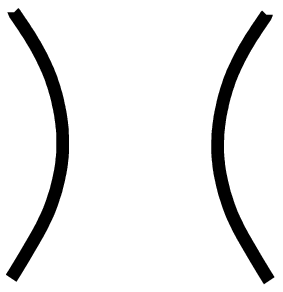}} \end{array}\right>}$
\item $\displaystyle{\left< L \cup \bigcirc \right>}=(-A^2-A^{-2})\left<L\right>$
\item $\displaystyle{\left<\to\right>=1}$
\end{enumerate}
Here, $\bigcirc$ is any closed curve having no classical intersections with itself or with $L$.  The symbol $\to$ corresponds to any long virtual knot having no classical self-intersections. The Jones-Kauffman polynomial, $f_L(A)$, is defined on oriented diagrams using the formula:
\[
f_L(A)=(-A)^{-3 w(L)} \left<\left|L\right|\right>,
\]
where $|L|$ means the virtual long knot $L$ without its orientation.  As usual, the writhe of $L$ is denoted $w(L)$ and is defined to be the sum of the classical crossing signs of $L$. The fact that the axioms define a unique isotopy invariant of classical knots is standard (see \cite{MR1414898}).  The fact that it is a virtual isotopy invariant follows similarly (see \cite{virtkauff} for details in the case of closed virtual knots). In the following, if $L$ is oriented, we will write $<L>\equiv f_L(A)$ as a shorthand.

Besides moves RI-RIII and VrI-VrIV, the Kauffman bracket is invariant with respect to a virtualization move.  Schematically:
\[
\left< \begin{array}{c} \scalebox{.25}{\psfig{figure=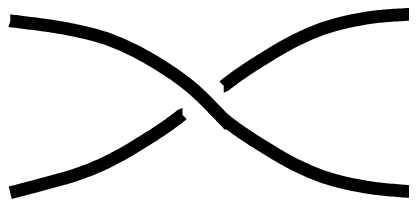}} \end{array} \right>=\left< \begin{array}{c} \scalebox{.25}{\psfig{figure=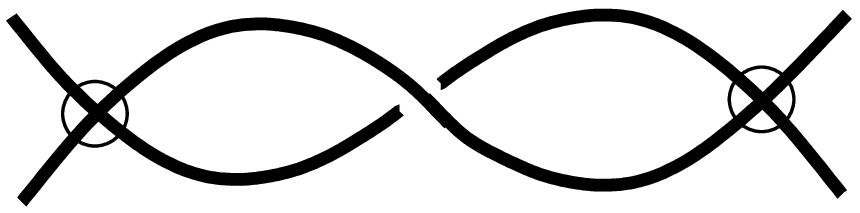}} \end{array}\right>
\]
If one applies any of the possible orientations of the strands, one sees that $w(L)$ is not affected by a virtualization move. Thus $f_L(A)$ is also invariant under the move. In terms of Gauss diagrams, this means that:
\[
\left< \varepsilon \begin{array}{c} \scalebox{.15}{\psfig{figure=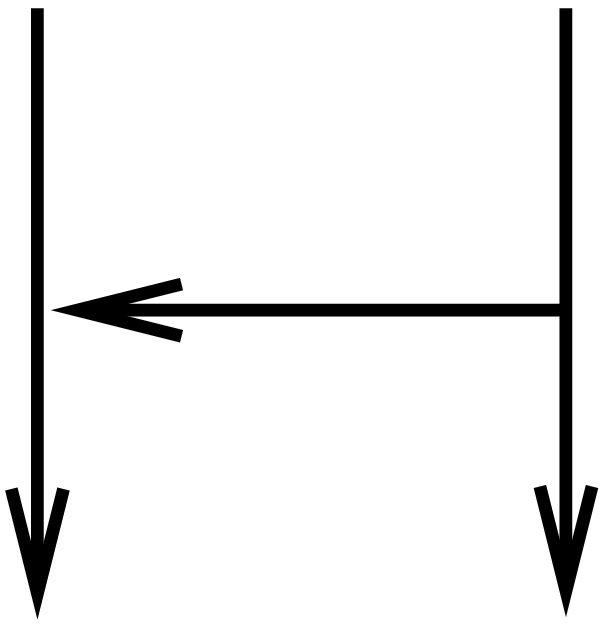}} \end{array}\right>= \left< \varepsilon \begin{array}{c} \scalebox{.15}{\psfig{figure=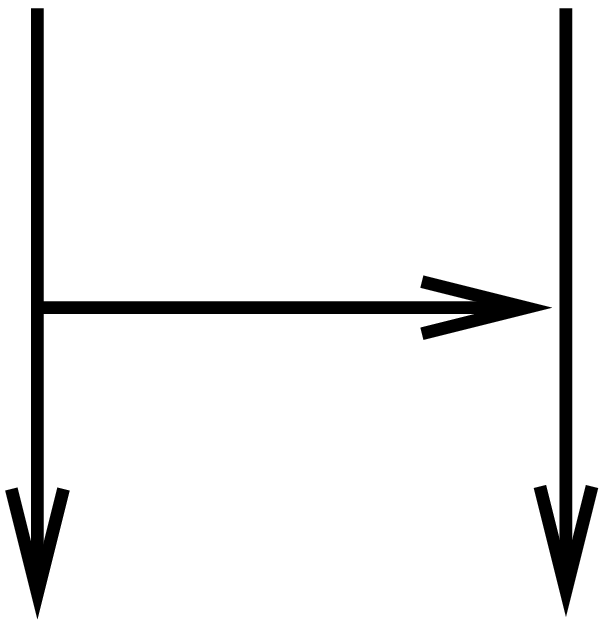}} \end{array}\right>
\]
\begin{observation} \label{obs2} On a proper pair for a fractional or regular twist sequence, any signed arrow may be replaced with an arrow pointing in the opposite direction having the same sign. The Gauss diagram for one such pair resembles the following:
\[
\left< \begin{array}{c} \scalebox{.15}{\psfig{figure=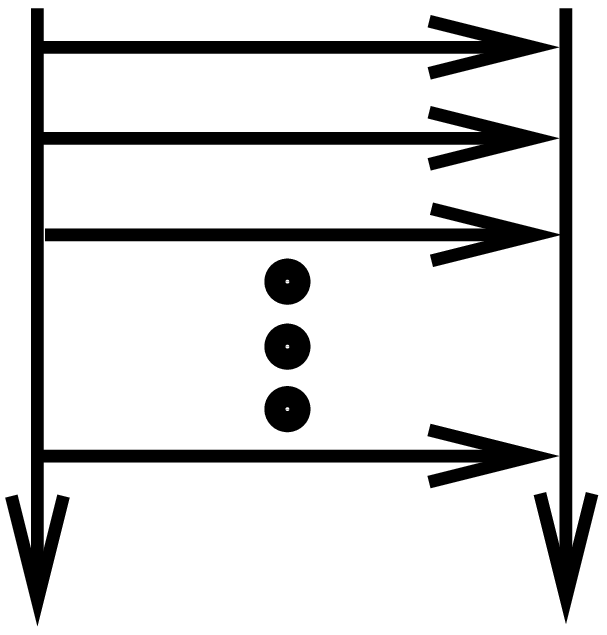}} \end{array}\right>= \left< \begin{array}{c} \scalebox{.15}{\psfig{figure=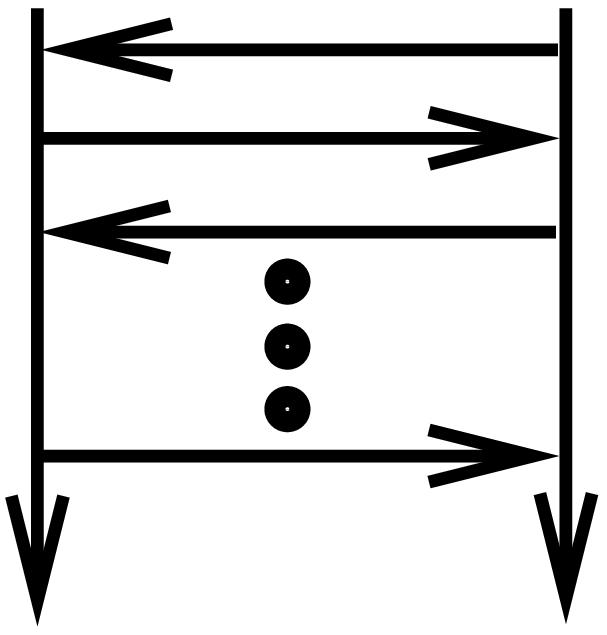}} \end{array}\right>
\]
\end{observation}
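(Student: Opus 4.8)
The plan is to derive Observation~\ref{obs2} directly from the virtualization invariance of the Kauffman bracket established just above, combined with the fact that the virtualization move is supported in an arbitrarily small disk about a single classical crossing. Fix a proper pair $(A,A')$ and let $\gamma$ be the arrow one wishes to reverse; by the definition of a proper pair, $\gamma$ has exactly one endpoint in $A$ and one in $A'$, and every other arrow meeting $A\cup A'$ likewise runs from $A$ to $A'$. Let $c$ be the classical crossing of the underlying virtual long knot corresponding to $\gamma$. First I would choose a disk $\Delta$ meeting the diagram only in the two short strand-arcs through $c$, with $\Delta$ disjoint from every other classical and virtual crossing, and perform the virtualization move inside $\Delta$.

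Next I would read off the effect of this move on the Gauss diagram. The two new crossings it introduces are virtual and hence invisible to the Gauss diagram; and because the move is confined to $\Delta$, the local isotopy it requires can be chosen so that no arrow endpoint is pushed past another along $\mathbb{R}$. Thus no arrow is created, destroyed, or relocated, and the only possible change is to $\gamma$. The move exchanges the over- and under-passages at $c$, so by the arrowhead convention of Section~1 the direction of $\gamma$ is reversed, while its two endpoints remain in $A$ and $A'$. Its sign is preserved: the remark above notes that $w(L)$ is unchanged by a virtualization move, and since the only classical crossing altered is the one carrying $\gamma$, the sign attached to $\gamma$ is forced to stay the same. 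Hence the move carries the left-hand Gauss diagram of the Observation to the right-hand one (the arrow $\gamma$ reversed, same sign, same endpoints, all other arrows untouched), and since $f_L(A)=\left<|L|\right>$ is a virtualization invariant the two diagrams have equal bracket. Reading this equality backwards handles the reverse replacement, and running the same argument at a crossing of the opposite sign covers the remaining cases, so the statement holds for an arrow of either direction and either sign in any proper pair --- in particular in a proper pair of a regular or fractional twist sequence, where it will be used in Section~3.

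I expect the one genuinely delicate point to be the translation between the geometry of the virtualization move and its bookkeeping on Gauss diagrams: one must verify that the local isotopy realizing the move can be performed without dragging an arrow endpoint past a neighbour, so that the ambient Gauss diagram is literally fixed outside $\gamma$, and one must confirm the sign bookkeeping --- essentially re-deriving, for an arbitrary arrow sitting among the other parallel arrows of a proper pair, the single instance already displayed above in Gauss-diagram form. Both are routine once locality of the move is invoked; beyond the two orientation/sign choices no case analysis or induction is needed, and everything else is immediate from the invariance of $\left<\cdot\right>$ under virtualization.
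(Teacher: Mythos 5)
Your argument is exactly the paper's (implicit) justification: Observation~\ref{obs2} is stated as an immediate consequence of the virtualization invariance of the Kauffman bracket and its Gauss-diagram form displayed just above, which is precisely what you invoke, with the locality and sign/writhe bookkeeping spelled out a bit more explicitly. The proposal is correct and takes essentially the same route as the paper.
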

\begin{observation} \label{obs3} Using a sequence of virtualizations and RII moves, the Jones-Kauffman polynomial on any fractional twist sequence can be re-indexed so that for some specified $n \in \mathbb{Z}$, $n \to 0$. Below it is shown that after re-indexing, $(n-1)\to -1$:
\[
\begin{array}{c} \left< \begin{array}{c} \scalebox{.2}{\psfig{figure=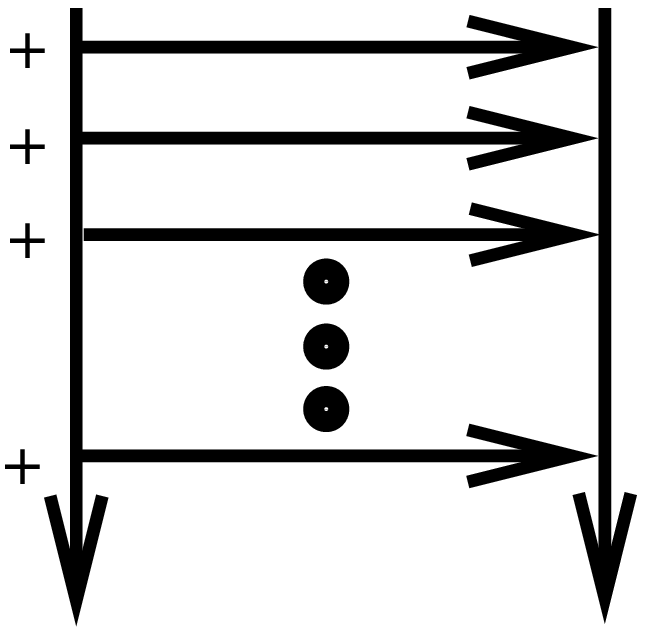}} \end{array} \right> \\ (n-1) \text{ arrows} \end{array}= \begin{array}{c} \left< \begin{array}{c} \scalebox{.2}{\psfig{figure=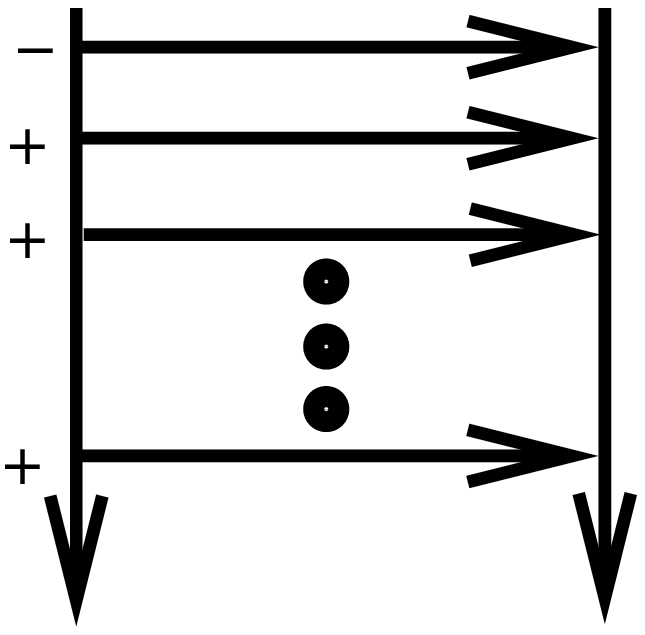}} \end{array}\right> \\ \text{RII move} \end{array}=\begin{array}{c} \left< \begin{array}{c} \scalebox{.2}{\psfig{figure=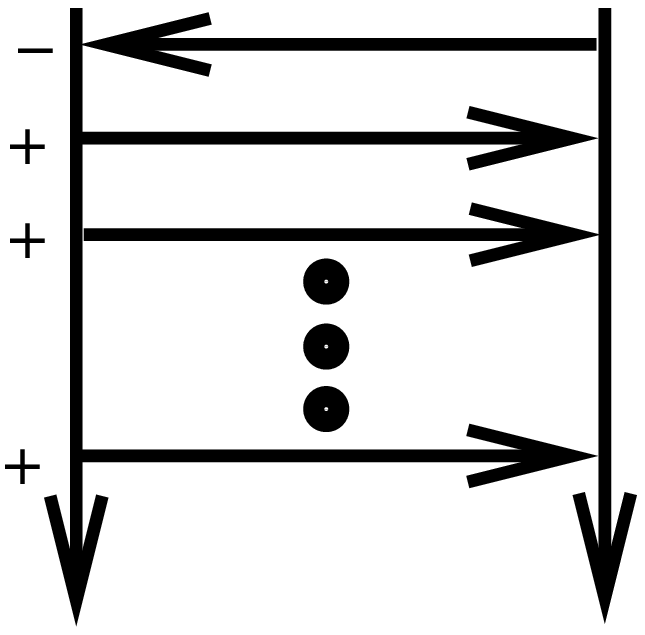}} \end{array} \right> \\ \text{new } \Phi \text{ at } -1 \end{array} 
\]
\end{observation}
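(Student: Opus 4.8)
The proof uses only one special feature of the Jones--Kauffman polynomial: its invariance under the virtualization move, in the Gauss-diagram form recorded just above, which says that an arrow joining the two intervals of a proper pair may be ``pushed across'' without changing the polynomial. Write $f$ for the Jones--Kauffman polynomial; the same argument applies to each coefficient invariant of its power-series expansion. Given a fractional twist sequence $\Phi:\mathbb Z\to\mathscr K$ of type $FYZ$ and a chosen $n\in\mathbb Z$, the plan is to exhibit another fractional twist sequence $\Psi:\mathbb Z\to\mathscr K$ with $f(\Psi(k))=f(\Phi(k+n))$ for every $k$; this is exactly what it means to re-index $f\circ\Phi$ so that $n\mapsto 0$. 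By passing to a mirror sequence we may assume $n\ge 1$, the case $n=0$ being trivial. Recall that $\Phi(m)$ consists of a fixed diagram together with $|m|$ parallel arrows, each of sign $\mathrm{sgn}(m)$ and direction $Z$, inside a proper pair $(A,A')$; in particular the proper pair of $\Phi(0)$ is empty.

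First I would build the ``new fixed part.'' Starting from $\Phi(n)$, apply the virtualization relation repeatedly to relocate its whole block of $n$ arrows so that the block comes to lie just outside a narrower proper pair $(\tilde A,\tilde A')$; by virtualization-invariance this fixes $f$. Define $\Psi(m)$ to be the diagram consisting of this new fixed part together with $|m|$ parallel arrows of sign $\mathrm{sgn}(m)$ inside $(\tilde A,\tilde A')$, directed compatibly; one checks directly from the definitions that $\Psi$ is again a fractional twist sequence, of some type $FY'Z'$. It remains to see that $f(\Psi(m))=f(\Phi(n+m))$ for all $m$. For $m\ge 0$ this is immediate: undoing the relocation in $\Psi(m)$ by the same virtualizations returns the $n+m$ parallel arrows of $\Phi(n+m)$ inside a proper pair, so the polynomials agree. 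For $m=-j$ with $j>0$, undoing the relocation produces $n$ arrows of sign $+$ alongside $j$ arrows of sign $-$ inside one proper pair; after possibly reversing some arrow directions, which Observation~\ref{obs2} permits without changing $f$, an RII move --- a genuine equivalence in $\mathscr K$ --- cancels $\min(j,n)$ oppositely-signed pairs, leaving exactly the $|n-j|$ parallel arrows of the correct sign that constitute $\Phi(n-j)$. The displayed three-picture identity is this computation run in the opposite direction in the representative case $m=-1$: from $\Phi(n-1)$ an RII move inserts the canceling pair of arrows needed to re-present the picture relative to $(\tilde A,\tilde A')$, a virtualization assembles the new fixed part, and the result is $\Psi(-1)$.

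The step I expect to be the main obstacle is the picture-level bookkeeping: one must verify that a \emph{single} relocation of the $n$-arrow block is simultaneously compatible with inserting or deleting any number of fractional arrows in $(\tilde A,\tilde A')$, so that $\Psi$ is a legitimate fractional twist sequence and the identity $f\circ\Psi=(f\circ\Phi)(\cdot+n)$ holds for all indices at once rather than for finitely many. Each move involved is supported in a disk, so this is a finite local check; the displayed identity is the model case, and the remaining cases differ from it only in the number of parallel arrows present.
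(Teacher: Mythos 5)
Your argument is, in substance, the same as the paper's: absorb the block of $n$ arrows into the fixed part of a new fractional twist sequence, observe that nonnegative new indices reproduce $\Phi(n+m)$ on the nose, and handle negative new indices by reversing arrow directions via Observation~\ref{obs2} and then cancelling oppositely signed pairs by RII in Gauss-diagram form --- which is precisely the content of the displayed three-picture identity, run in the representative case $m=-1$.

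One step of your write-up is misstated, though the proof survives without it: the virtualization move cannot ``relocate'' the block of $n$ arrows. In Gauss-diagram terms virtualization only reverses the direction of an arrow while keeping its sign and its endpoints fixed (this is exactly Observation~\ref{obs2}); it never moves arrow endpoints, and since the Gauss diagram determines the long virtual knot there is no move available that would push the block ``outside'' the pair. Fortunately no relocation is needed: simply choose the new proper pair $(\tilde A,\tilde A')$ to be subintervals of $(A,A')$ adjacent to the block and avoiding its endpoints, so that the $n$ old arrows belong to the fixed diagram of $\Psi$ by fiat. With that replacement, your $m\ge 0$ case becomes an equality of Gauss diagrams (no moves at all), and your $m<0$ case is exactly the paper's combination of virtualizations (direction reversals, preserving $f$ but not the knot type) and genuine RII cancellations, including reversing the leftover arrows back afterwards so that the result is literally $\Phi(n-j)$ up to $f$-preserving moves. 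The remaining ``picture-level bookkeeping'' you flag is indeed the only content left, and it is the finite local check the paper displays.
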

\subsection{Kauffman Finite-Type Invariants from the Jones-Kauffman Polynomial} For $K \in \mathscr{K}$, let $v_k(K)$ denote the coefficient of $x^k$ in the power series expansion of $f_K(e^x)$ about $x=0$. Kauffman\cite{virtkauff} has shown that $v_k:\mathscr{K} \to \mathbb{Q}$ is a Kauffman finite-type invariant of degree $\le k$.  In this section, we prove the second main result of this paper: For all $k \ge 2$, $v_k:\mathscr{K} \to \mathbb{Q}$ is not a GPV finite-type invariant of degree $\le n$ for any $n>0$ because there is no $n$ for which it is a polynomial of degree $\le n$ on every fractional twist sequence.

Consider the fractional twist sequence $\Phi:\mathbb{Z} \to \mathscr{K}$ of Figure \ref{testseq} given in Gauss diagram notation.  This can be easily realized by a long virtual knot diagram.  From this diagram, one obtains several formulas for the values of $f_{\Phi(n)}(A)$.
\newline
\begin{figure} 
\[
\begin{array}{ccccccc} \cdots & \scalebox{.2}{\psfig{figure=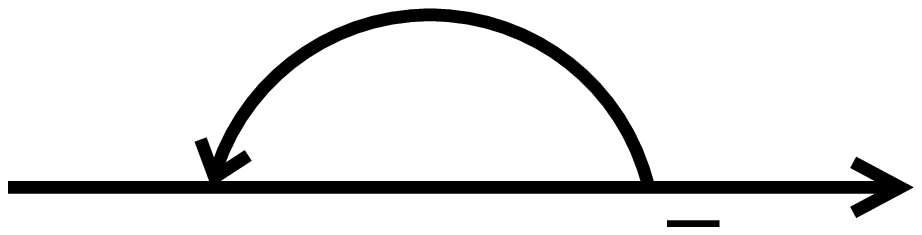}} & \scalebox{.2}{\psfig{figure=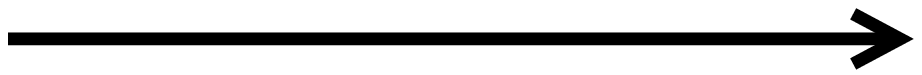}} & \scalebox{.2}{\psfig{figure=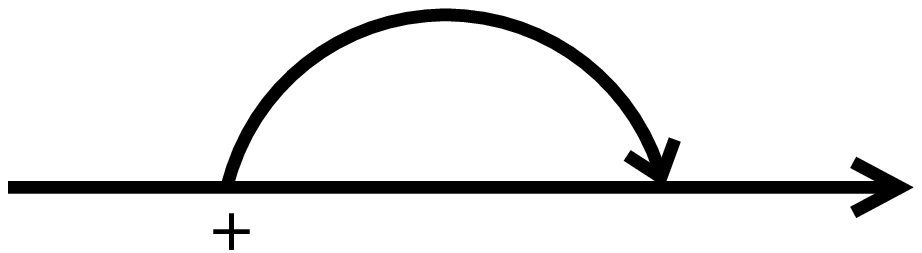}} & \scalebox{.2}{\psfig{figure=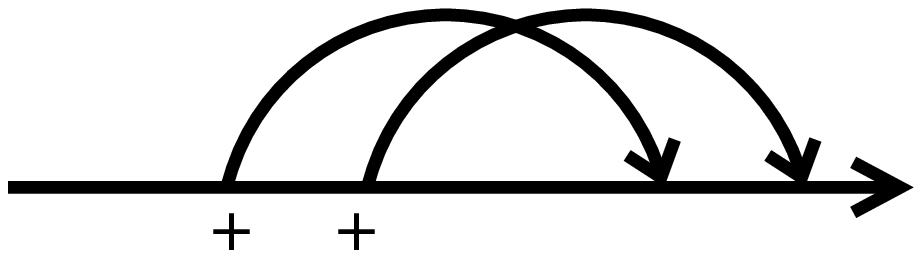}} & \scalebox{.2}{\psfig{figure=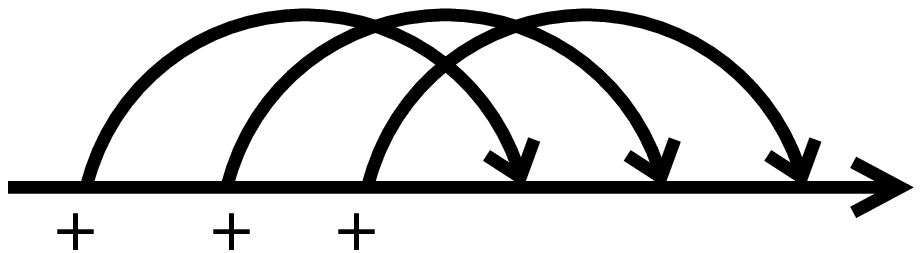}} & \cdots \\
 & n=-1 & n=0 & n=1 & n=2 & n=3 & \\
\end{array}
\]
\centerline{\scalebox{.4}{\psfig{figure=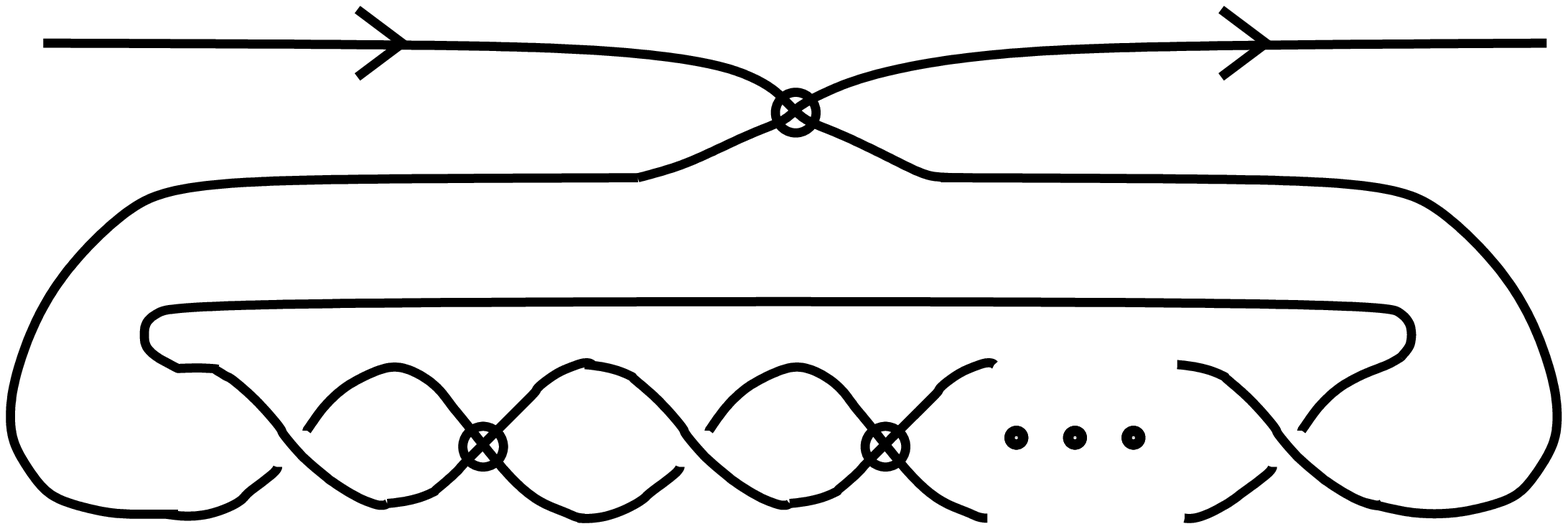}}}
\caption{A Fractional Twist Sequence} \label{testseq}
\end{figure}
\begin{lemma} \label{specpoly} The Jones-Kauffman polynomial for the $n$-th term of this fractional twist sequence is given by:
\begin{enumerate}
\item For $n$ even, $n \ge 0$:
\[
f_{\Phi(n)}(A)=\frac{1}{A^4+1} \left[(A^4+A^2+1)\cdot A^{-2n}-A^{2-6n} \right]
\]
\item For $n$ odd, $n \ge -1$:
\[
f_{\Phi(n)}(A)=\frac{1}{A^4+1} \left[(A^8+A^4+1)\cdot A^{-2-2n}-A^{2-6n}\right]
\]
\end{enumerate}
\end{lemma}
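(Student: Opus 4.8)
The plan is to compute the unnormalized Kauffman bracket of the explicit long virtual knot diagram $D_n$ realizing $\Phi(n)$ shown in Figure~\ref{testseq}, and then apply the writhe normalization $f_{\Phi(n)}(A)=(-A)^{-3w(\Phi(n))}\langle D_n\rangle$. Two small cases anchor everything: the $n=0$ term has empty Gauss diagram inside the proper pair and is the long unknot, and the $n=-1$ term has a single arrow, hence is an RI-kink and again the long unknot, so $f_{\Phi(0)}(A)=f_{\Phi(-1)}(A)=1$, in agreement with both asserted formulas (and likewise $\Phi(1)$, another RI-kink unknot, gives $1$).

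For the general computation I would resolve the classical crossings of the twist region one at a time using skein axiom~(1). The all-identity state simply reproduces the rest of the diagram with the twist region replaced by two parallel strands, while every other state short-circuits the twist region into a chain of curls; each curl is removed by an RI move at the cost of a factor $-A^{\pm 3}$ forced by the axioms, leaving a diagram that does not depend on $n$. Collecting these contributions --- equivalently, expanding the length-$n$ twist region in the rank-two module spanned by the two planar smoothings of a $2$-strand tangle (the identity and the cap--cup), which yields an identity coefficient $A^{n}$ and a cap--cup coefficient of the geometric-series form $\frac{A^{n+2}-(-1)^nA^{2-3n}}{A^4+1}$ (up to the bracket's sign conventions) --- and then multiplying by $(-A)^{-3w(\Phi(n))}$ produces a closed formula for $f_{\Phi(n)}(A)$. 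The parity of $n$ enters both through the $(-1)^n$ carried by the cap--cup coefficient and through the writhe factor, and also through which pair of twist-region endpoints the all-identity smoothing joins; separating these cases is exactly what splits the answer into the $A^4+A^2+1$ and $A^8+A^4+1$ forms. I would present the net effect as the two-step recursion $f_{\Phi(n)}(A)=A^{-4}f_{\Phi(n-2)}(A)+A^{6-6n}-A^{2-6n}$, which can also be read off directly by skein-resolving the last full twist, and then confirm the two closed forms by a one-line induction, the even case anchored at $\Phi(0)$ and the odd case at $\Phi(-1)$.

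The real work --- and the only delicate part --- is the geometric bookkeeping for the particular diagram of Figure~\ref{testseq}: pinning down which smoothing of a twist crossing reproduces $\Phi(n-1)$, what fixed diagram the short-circuited strand reduces to, the precise signs ($-A^{+3}$ versus $-A^{-3}$) of the curls, and the exact writhe $w(\Phi(n))$. These choices move powers of $A$ around and are what make the distinction between $A^4+A^2+1$ and $A^8+A^4+1$ appear, so they must be tracked carefully; Observations~\ref{obs2} and~\ref{obs3} (virtualization together with RII) are the tools that put the diagram into a standard form in which the twist region is a clean $2$-strand twist and this bookkeeping becomes unambiguous. Once the recursion is established, the remainder is routine.
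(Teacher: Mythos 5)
Your proposal is correct and takes essentially the same route as the paper: the paper also proves the lemma by a two-step skein recursion for the twist region, namely $\left\langle|\Phi(n)|\right\rangle=A^{2}\left\langle|\Phi(n-2)|\right\rangle+(-A)^{-3(n-2)}(1-A^{-4})$, together with the initial data $f_{\Phi(-1)}=f_{\Phi(0)}=f_{\Phi(1)}=1$ (and $f_{\Phi(2)}$), and your recursion $f_{\Phi(n)}(A)=A^{-4}f_{\Phi(n-2)}(A)+A^{6-6n}-A^{2-6n}$ is exactly the writhe-normalized form of this. Your Temperley--Lieb-style expansion of the $n$-crossing twist region (identity coefficient $A^{n}$, cap--cup coefficient $\frac{A^{n+2}-(-1)^{n}A^{2-3n}}{A^{4}+1}$) is consistent with that recursion and adds detail the paper leaves implicit.
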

\begin{proof} This follows in the usual way from the axioms for the Jones-Kauffman polynomial and the solution of a recursion equation. This equation follows from the axioms of the Kauffman bracket:
\[
\left< |\Phi(n)| \right>=A^2 \left<|\Phi(n-2)| \right>+(-A)^{-3(n-2)}(1-A^{-4})
\]
The result follows from the data $f_{\Phi(1)}(A)=f_{\Phi(0)}(A)=f_{\Phi(-1)}(A)=1$ and $f_{\Phi(2)}(A)=A^{-4}+A^{-6}-A^{-10}$. 
\end{proof}

\begin{lemma} \label{inductform} Inductive formulas for the coefficients $v_k(\Phi(n))$ are given by:
\[
2 \cdot v_k(\Phi(n))+\sum_{\stackrel{i+j=k}{1 \le i \le k}} \frac{4^i}{i!} \cdot v_j(\Phi(n))= \left\{ \begin{array}{l} \frac{1}{k!}\left[(4-2n)^k+(2-2n)^k+(-2n)^k-(2-6n)^k \right] \\  \\ \frac{1}{k!}\left[(6-2n)^k+(2-2n)^k+(-2-2n)^k-(2-6n)^k \right] \end{array} \right.
\]
The top equation holds when $n \ge 0$ is even; the bottom holds when $n \ge -1$ is odd.
\end{lemma}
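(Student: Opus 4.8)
The plan is to derive both formulas directly from the closed expressions for $f_{\Phi(n)}(A)$ in Lemma \ref{specpoly}: clear the denominator $A^4+1$, substitute $A=e^x$, and compare coefficients of $x^k$ on the two sides.

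First I would rewrite Lemma \ref{specpoly} by multiplying through by $A^4+1$. For $n\ge 0$ even this gives
\[
(A^4+1)\,f_{\Phi(n)}(A)=(A^4+A^2+1)\,A^{-2n}-A^{2-6n},
\]
and for $n\ge -1$ odd it gives
\[
(A^4+1)\,f_{\Phi(n)}(A)=(A^8+A^4+1)\,A^{-2-2n}-A^{2-6n}.
\]
Since each $f_{\Phi(n)}(A)$ is a Laurent polynomial in $A$ (as is visible from the recursion used in the proof of Lemma \ref{specpoly}), both sides are honest Laurent polynomials, so the substitution $A=e^x$ is unambiguous and produces identities of convergent power series in $x$.

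Next I would expand the left-hand side. Because $e^{4x}+1=2+\sum_{i\ge 1}\frac{4^i}{i!}x^i$ and, by the definition of the invariants $v_j$, $f_{\Phi(n)}(e^x)=\sum_{j\ge 0}v_j(\Phi(n))\,x^j$, the Cauchy product shows that the coefficient of $x^k$ in $(e^{4x}+1)\,f_{\Phi(n)}(e^x)$ equals
\[
2\,v_k(\Phi(n))+\sum_{\substack{i+j=k\\ 1\le i\le k}}\frac{4^i}{i!}\,v_j(\Phi(n)),
\]
which is exactly the left-hand side of the claimed identity.

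Finally I would expand the right-hand side. Under $A=e^x$ the even-case numerator becomes $e^{(4-2n)x}+e^{(2-2n)x}+e^{-2nx}-e^{(2-6n)x}$, whose $x^k$-coefficient is $\frac{1}{k!}\big[(4-2n)^k+(2-2n)^k+(-2n)^k-(2-6n)^k\big]$; the odd-case numerator becomes $e^{(6-2n)x}+e^{(2-2n)x}+e^{(-2-2n)x}-e^{(2-6n)x}$, with $x^k$-coefficient $\frac{1}{k!}\big[(6-2n)^k+(2-2n)^k+(-2-2n)^k-(2-6n)^k\big]$. Equating the two expansions term by term yields the two stated formulas. There is no real obstacle here: the computation is routine generating-function bookkeeping, and the only point deserving a word of care is the observation that the rational expressions of Lemma \ref{specpoly} are in fact Laurent polynomials, so that evaluating at $A=e^x$ and reading off Taylor coefficients is legitimate.
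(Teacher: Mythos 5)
Your proof is correct and follows essentially the same route as the paper: multiply the formulas of Lemma \ref{specpoly} by $A^4+1$, substitute $A=e^x$, and compare the coefficient of $x^k$ obtained from the Cauchy product $(e^{4x}+1)\sum_j v_j(\Phi(n))x^j$ on the left with the exponential expansions on the right. Your added remark that the expressions are Laurent polynomials (so the substitution is unproblematic) is a harmless elaboration of the same argument.
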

\begin{proof} Consider the two formulas in Lemma \ref{specpoly}.  Multiplying by $(A^4+1)$ gives a Laurent polynomial in $A$ of the form $(A^4+1) \cdot f_{\Phi(n)}(A)$ on the left hand side.  Perform the substitution $A \to e^x$ on this form of the equation and expand in a power series about $x=0$.  The two power series on the left hand side can be multiplied together to obtain the desired relationship.  The two possible expressions on the right hand side follow directly from Lemma \ref{specpoly} and the power series about $z=0$ for $e^z$.
\end{proof} 
The main idea of the proof of Theorem \ref{jkthm} is to use the fact that $v_k(\Phi(n))$ is a polynomial of degree $ \le k$ on all regular twist sequences.  By Observation \ref{obs2}, we can write the even terms of a fractional twist sequence as a regular twist sequence of type $ESL$ and the odd terms as a regular twist sequence of type $OSL$.  On these subsequences, $v_k(\Phi(n))$ is a polynomial in the variable $n$ of degree $ \le k$.  The following lemma provides information about the coefficients of these polynomials.
\begin{corollary} \label{coeffdata} For $k \ge 2$, writing $v_k(\Phi(n))$ as a polynomial of degree $\le k$ in the variable $n$ gives the following data on the coefficients:
\begin{enumerate}
\item $\displaystyle{\text{coeff}(n^k)=\frac{(-1)^{k-1}}{2 \cdot k!} (6^k-3\cdot 2^k)}$
\item $\displaystyle{\text{coeff}(n^{k-1})=0}$
\item $\displaystyle{\text{coeff}(n^{k-2})=\left\{  \begin{array}{cl} \frac{(-1)^{k-1}}{(k-2)!}(2^{k-2}-6^{k-2}) & n \text{ is even} \\ \frac{(-1)^{k-1}}{(k-2)!}(-5\cdot 2^{k-2}-6^{k-2}) & n \text{ is odd} \end{array} \right. }$
\end{enumerate}
\end{corollary}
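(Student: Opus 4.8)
The plan is to establish all three formulas simultaneously by induction on $k$, comparing the coefficients of $n^k$, $n^{k-1}$ and $n^{k-2}$ on the two sides of the recursion in Lemma \ref{inductform}. As noted just before the statement, Observation \ref{obs2} together with Theorem \ref{derivthm} shows that, restricted to the even integers and to the odd integers separately, each $n\mapsto v_j(\Phi(n))$ agrees with a polynomial in $n$ of degree $\le j$ (the even terms of $\Phi$ form a regular twist sequence of type $ESL$, the odd terms one of type $OSL$, and in each case the twist parameter is affine in $n$). Evaluating Lemma \ref{specpoly} at $A=1$ gives $f_{\Phi(n)}(1)=1$, so $v_0(\Phi(n))\equiv 1$; isolating the $i=k$ summand (which is therefore the constant $4^k/k!$) in Lemma \ref{inductform} turns the recursion into
\[
2\,v_k(\Phi(n))+\sum_{i=1}^{k-1}\frac{4^i}{i!}\,v_{k-i}(\Phi(n))=R_k(n)-\frac{4^k}{k!},
\]
where $R_k(n)$ is the degree-$\le k$ polynomial (one for even $n$, one for odd $n$) on the right of Lemma \ref{inductform}. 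Since the identity holds for infinitely many even (resp.\ odd) $n$, it is a genuine polynomial identity in $n$ on each parity class, and coefficients may be compared.

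Write $c_{j,m}$ for the coefficient of $n^m$ in $v_j(\Phi(n))$ on the relevant parity class. The key point is that $v_{k-i}(\Phi(n))$ has degree $\le k-i$, so on the left-hand side only the terms with $i\le k-m$ can affect $c_{k,m}$. For $m=k$ this leaves $2\,v_k$ alone, and the $n^k$-coefficient of $R_k(n)$ is $\tfrac1{k!}\big(3(-2)^k-(-6)^k\big)$ in both the even and the odd case: the three forms $a-2n$ occurring there have $a\in\{4,2,0\}$ in one case and $a\in\{6,2,-2\}$ in the other, contributing the same $n^k$-term $3(-2)^k$, while the subtracted $(2-6n)^k$ contributes $(-6)^k$ either way; this gives part (1). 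For $m=k-1$ the left side contributes $2\,c_{k,k-1}+4\,c_{k-1,k-1}$, and the $n^{k-1}$-coefficient of $R_k(n)$ is again parity-independent (it sees only $4+2+0=6+2+(-2)$); feeding in $c_{k-1,k-1}$ from part (1) at index $k-1$ produces an exact cancellation, hence part (2). For $m=k-2$ the left side contributes $2\,c_{k,k-2}+4\,c_{k-1,k-2}+8\,c_{k-2,k-2}$, where $c_{k-1,k-2}=0$ by part (2) at index $k-1$ and $c_{k-2,k-2}$ comes from part (1) at index $k-2$; this time the $n^{k-2}$-coefficient of $R_k(n)$ depends on the sum of the $a^2$, which is $4^2+2^2+0^2=20$ for $n$ even but $6^2+2^2+(-2)^2=44$ for $n$ odd, and this discrepancy is exactly what produces the two cases in part (3). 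Solving for $c_{k,k-2}$ and simplifying with $(-2)^{k-2}=(-1)^k2^{k-2}$ and $(-6)^{k-2}=(-1)^k6^{k-2}$ yields the stated expressions.

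To start the induction one checks $k\le 1$ directly: $f_{\Phi(0)}(A)=1$ (from the data in Lemma \ref{specpoly}) forces $v_0(\Phi(n))\equiv 1$, and then $v_1(\Phi(n))$, being of degree $\le 1$ with leading coefficient $\tfrac{(-1)^0}{2\cdot 1!}(6-3\cdot 2)=0$, equals the constant $v_1(\Phi(0))=0$; these are precisely the low-order values $c_{0,0}$, $c_{1,0}$, $c_{1,1}$ needed to run the $m=k-2$ step at $k=2,3$ (note that when $k=2$ the isolated constant $4^k/k!$ itself contributes to the $n^{k-2}=n^0$ coefficient). The argument presents no real obstacle — only the bookkeeping of expanding $(a-2n)^k$ and $(2-6n)^k$ to order $n^{k-2}$ with the binomial weights $\binom{k}{1}$ and $\binom{k}{2}$ and tracking signs — and the one point worth isolating conceptually is that the top two coefficients of $R_k(n)$ are insensitive to the parity of $n$, which forces parts (1) and (2) to be parity-free, while the third coefficient is not, which forces the even/odd split in part (3).
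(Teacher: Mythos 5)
Your argument is correct and follows essentially the same route as the paper's proof: compare the coefficients of $n^k$, $n^{k-1}$, $n^{k-2}$ in the recursion of Lemma \ref{inductform}, using the degree bounds on the $v_j(\Phi(n))$ (on each parity class) to limit which terms contribute and the binomial theorem to extract coefficients from the right-hand side, feeding parts (1) and (2) at lower indices back in. The differences are only in bookkeeping --- you treat both parities in part (3), justify the coefficient comparison, and check the boundary cases $k=2,3$ (via $v_0\equiv 1$ and $v_1\equiv 0$) explicitly, where the paper states that the remaining cases follow similarly.
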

\begin{proof} Consider the inductive formula of Lemma \ref{inductform} leading off with $v_k(\Phi(n))$.  Since $v_j(\Phi(n))$ is a polynomial of degree $\le j$ when $n$ is restricted to be even or $n$ is restricted to be odd, the only $j$ for which the coefficient of $n^k$ is nonzero is $j=k$.  The first formula follows from this observation and the application of the binomial theorem to the right hand side of either equation in Lemma \ref{inductform}.  Note that the same formula holds for $n$ even and $n$ odd.

Apply this result to the coefficient of $n^{k-1}$ in $v_{k-1}(\Phi(n))$. In the inductive formula which leads off with $v_k(\Phi(n))$, the only values of $j$ for which  $v_j$ might have a nonzero coefficient of $n^{k-1}$ are $j=k, k-1$.  Hence, we have the following equation:
\[
2 \cdot\text{coeff}(n^{k-1})+\frac{4(-1)^{k-2}}{2 \cdot (k-1)!}(6^{k-1}-3\cdot 2^{k-1})=\frac{(-1)^{k-1}}{k!} {k \choose k-1}(6\cdot 2^{k-1}-2\cdot 6^{k-1})
\]
From this, we conclude that $\text{coeff}(n^{k-1})=0$.

For the last statement, specialize to the case where $k$ and $n$ are both odd. From the inductive formula leading with $v_k(\Phi(n))$, the binomial theorem gives a right hand side of:
\[
\frac{1}{2!(k-2)!} \left(4\cdot 6^{k-2}-44\cdot 2^{k-2}\right)
\]
The coefficient of $n^{k-2}$ in $v_{k-2}(\Phi(n))$ has already been determined. The coefficient of $n^{k-2}$ in $v_{k-1}(\Phi(n))$ is $0$.  Thus on the left hand side, the only values of $j$ for which $v_j$ has a nonzero coefficient of $n^{k-2}$ are $j=k, k-2$.  All together, this implies the following formula for the coefficient of $n^{k-2}$ in $v_k(\Phi(n))$:
\[
2 \cdot \text{coeff}(n^{k-2})+\frac{4^2}{2!}\cdot\frac{1}{2}\frac{1}{(k-2)!}(6^{k-2}-3\cdot 2^{k-2})=\frac{1}{2!(k-2)!} \left(4\cdot 6^{k-2}-44\cdot 2^{k-2}\right)
\]
Solving for $\text{coeff}(n^{k-2})$ gives the desired result.  The other cases may be proved similarly.
\end{proof}
\begin{proof}[Proof of Theorem \ref{jkthm}] Once again, specialize to the case where $k$ is odd. Here, the fractional twist sequence $\Phi:\mathbb{Z}\to\mathscr{K}$ is defined as above. Define $\Phi^o:\mathbb{Z}\to\mathscr{K}$, $\Phi^e:\mathbb{Z} \to \mathscr{K}$ by $\Phi^o(z)=\Phi(2z-1)$ and $\Phi^e(z)=\Phi(2z)$. On these coordinates, $v_k\circ\Phi^o$ and $v_k\circ\Phi^e$ are both polynomials of degree $\le k$.  This follows from Theorem \ref{derivthm}, Observation \ref{obs2}, and the fact that $v_k$ is a Kauffman type invariant of degree $\le k$.

After a change of variables, $v_k\circ \Phi$ may be considered as a function defined by the polynomial $v_k\circ\Phi^o$ on the odd integers and $v_k\circ\Phi^e$ on the even integers.  It will be shown that $v_k \circ \Phi$ itself is not a polynomial.  To do this, each of the defining polynomials is extended over $\mathbb{R}$.  Define $p(z)$ and $q(z)$ to be the Lagrange interpolating polynomials of degree $k$ for the sets $P$ and $Q$, respectively (see \cite{MR0519124}, pg. 109).
\begin{eqnarray*}
P &=& \{(0,\Phi(0)),(2,\Phi(2)),\ldots,(2k,\Phi(2k))\} \\
Q &=& \{(-1,\Phi(-1)),(1,\Phi(1)),\ldots,(2k-1,\Phi(2k-1))\}
\end{eqnarray*}
So after an appropriate change of variables, $p(z)$ agrees with $\Phi^e(z)$ and $q(z)$ agrees with $\Phi^o(z)$, whenever $\Phi^e(z)$ and $\Phi^o(z)$ are defined.  Note that $p(z)$ and $q(z)$ are defined for all integers, but the coefficients of $z^j$ are determined entirely by the data in $P$ and $Q$. Hence, the coefficients are given as in Corollary \ref{coeffdata}.  The coefficient of $z^k$ in both $p(z)$ and $q(z)$ is the same. Since $k$ is odd, $k\ge 3$, and it follows that $\displaystyle{\lim_{z \to \infty} p(z)=\lim_{z \to \infty} q(z)=\infty}$.  To see which of $p(z)$ and $q(z)$ is eventually larger, it is necessary only to compare the coefficients of $z^{k-2}$:
\begin{eqnarray*}
\text{coeff}(z^{k-2})\text{ in } p(z), k \ge 3 & : & \frac{1}{(k-2)!}(2^{k-2}-6^{k-2})<0 \\
\text{coeff}(z^{k-2})\text{ in } q(z), k \ge 3 & : & \frac{1}{(k-2)!}(-5\cdot 2^{k-2}-6^{k-2})<0 \\ 
\end{eqnarray*}
Clearly then, $\displaystyle{\lim_{z \to \infty}(p(z)-q(z))=\infty}$ and therefore $p(z)>q(z)$ for $z$ sufficiently large.  Suppose that $p(z)>q(z)$ for $z>N$, where $N$ is some sufficiently large even integer.  

By Observation \ref{obs3}, there is a fractional twist sequence $\bar{\Phi}:\mathbb{Z} \to \mathscr{K}$ obtained from $\Phi:\mathbb{Z}\to\mathscr{K}$ by shifting $z=N$ to $z=0$ and satisfying:
\[
v_k\circ\bar{\Phi}(z)=v_k\circ\Phi(z+N)
\]
The other polynomials are also shifted by $N$: $\bar{p}(z)=p(z+N)$, $\bar{q}(z)=q(z+N)$.  Now, consider the average $\bar{m}(z)$ of $\bar{p}(z)$ and $\bar{q}(z)$:
\[
\bar{m}(z)=\frac{\bar{p}(z)+\bar{q}(z)}{2}
\]
This is a polynomial defined on every integer(in fact, all of $\mathbb{R}$) and having degree $\le k$.  Note that for all $z \ge 0$, $\bar{q}(z) < \bar{m}(z) < \bar{p}(z)$.  The proof will be completed by showing at most finitely many discrete derivatives vanish.  Let $\alpha \ge 3$.
\begin{eqnarray*}
(\partial^{\alpha} v_k \circ \bar{\Phi})(0)&=& \sum_{j=0}^{\alpha}(-1)^{\alpha+j} {\alpha \choose j} v_k\circ\bar{\Phi}(j) \\
&=& \sum_{j=0}^{\alpha}(-1)^{\alpha+j} {\alpha \choose j} \left\{ \begin{array}{cl} \bar{p}(j) & \text{if } j \text{ is even} \\
                                                                                    \bar{q}(j) & \text{if } j \text{ is odd} \end{array} \right\} \\
&=& \sum_{j=0}^{\alpha}(-1)^{\alpha+j} {\alpha \choose j} \bar{m}(j)+\sum_{\stackrel{j=0}{2j \le \alpha}} (-1)^{\alpha+2j} {\alpha \choose 2j} [\bar{p}(2j)-\bar{m}(2j)] \\
&+& \sum_{\stackrel{j=0}{2j+1\le \alpha}}(-1)^{\alpha+2j+1} {\alpha \choose 2j+1} [\bar{q}(2j+1)-\bar{m}(2j+1)]\\                 
\end{eqnarray*}
Since $\bar{m}(z)$ is a polynomial of degree $\le k$, Observation \ref{obs1} implies that the first sum vanishes when $\alpha > k$.  If $\alpha$ is odd, the second and third sums are both always negative and hence $(\partial^{\alpha} v_k \circ \bar{\Phi})(0)<0$. If $\alpha$ is even, the second and third sums are both always positive and hence $(\partial^{\alpha} v_k \circ \bar{\Phi})(0)>0$.  Thus for $k$ odd, $(\partial^{\alpha} v_k \circ \bar{\Phi})(0)\ne0$ for all but finitely many $\alpha$ and $v_k\circ\bar{\Phi}$ is not a polynomial of degree $\le n$ for any $n$. The case for $k$ even follows similarly.
\end{proof}
It is interesting to note that it is possible to obtain explicit formulas for $v_k(\Phi(n))$.  From this it follows that $v_k(\Phi(n))$ is an increasing function of $n$ when $k$ is odd and decreasing function of $n$ when $k$ is even.  The interpolating polynomials need not be increasing however, which is why it is necessary to look at their asymptotic behavior.  
\begin{corollary} Explicit formulas for computing $v_k(\Phi(n))$ are given by:
\[
v_k(\Phi(n))= \left\{ \begin{array}{cc} \frac{1}{k!} \left[(-2n)^k+\sum_{j=0}^{n-1}(-1)^{j+1}(4j-6n+2)^k \right] & n \text{ is even}, n \ge 2\\ & \\ \frac{1}{k!} \left[(2-2n)^k+\sum_{j=0}^{n-2}(-1)^{j+1}(4j-6n+2)^k \right] & n \text{ is odd}, n \ge 3 \\ \end{array} \right.
\]
\end{corollary}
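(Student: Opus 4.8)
The plan is to bypass the inductive relations of Lemma~\ref{inductform} altogether and instead extract closed Laurent-polynomial forms for $f_{\Phi(n)}(A)$ directly from Lemma~\ref{specpoly}, after which the formulas for $v_k(\Phi(n))$ fall out by a single coefficient extraction. The first step is to prove that for even $n\ge 2$,
\[
f_{\Phi(n)}(A)=A^{-2n}+\sum_{j=0}^{n-1}(-1)^{j+1}A^{4j-6n+2},
\]
while for odd $n\ge 3$,
\[
f_{\Phi(n)}(A)=A^{2-2n}+\sum_{j=0}^{n-2}(-1)^{j+1}A^{4j-6n+2}.
\]
Once these are known, substituting $A=e^{x}$ expresses $f_{\Phi(n)}(e^{x})$ as a finite sum of exponentials $e^{cx}$ with integer exponents $c$; since $v_{k}(\Phi(n))$ is by definition the coefficient of $x^{k}$ in this expansion, applying $[x^{k}]e^{cx}=c^{k}/k!$ term by term reproduces exactly the two formulas in the statement.

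To verify the first Laurent identity I would evaluate the finite geometric sum $\sum_{j=0}^{n-1}(-1)^{j+1}A^{4j}=-\frac{1-(-1)^{n}A^{4n}}{1+A^{4}}$, multiply by $A^{2-6n}$, add the isolated term $A^{-2n}$, and put everything over the common denominator $A^{4}+1$. Using that $n$ is even, the numerator collapses to $(A^{4}+A^{2}+1)A^{-2n}-A^{2-6n}$, which is precisely the bracketed quantity in Lemma~\ref{specpoly}(1); the odd case is identical, starting instead from Lemma~\ref{specpoly}(2) and again invoking the parity of $n$. Equivalently, one may clear denominators and check the telescoping identity $A^{2-2n}-A^{2-6n}=(A^{4}+1)\sum_{j=0}^{n-1}(-1)^{j+1}A^{4j-6n+2}$ by shifting the summation index so that the interior terms cancel in pairs. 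The ranges $n\ge 2$ (even) and $n\ge 3$ (odd) are exactly those in which the displayed sums are nonempty and Lemma~\ref{specpoly} applies; the small excluded values all satisfy $f_{\Phi(n)}\equiv 1$ and require no separate treatment.

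I do not expect a genuine obstacle here: the whole content is the bookkeeping of the geometric (equivalently telescoping) sum in the first step, together with tracking the factors $(-1)^{n}$ so that they vanish in precisely the way needed to recover Lemma~\ref{specpoly}. The only points requiring care are the sign and index conventions — notably that the exponent $4j-6n+2$ is common to both the even and odd formula, with only the leading isolated term and the upper summation limit changing — and, for the monotonicity remark preceding the corollary, comparing consecutive terms of the closed forms to see that $v_k(\Phi(n))$ is increasing in $n$ when $k$ is odd and decreasing when $k$ is even.
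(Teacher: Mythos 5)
Your proposal is correct and follows essentially the same route as the paper: the paper likewise derives the corollary by rewriting the closed forms of Lemma~\ref{specpoly} as the Laurent polynomials $A^{-2n}+\sum_{j=0}^{n-1}(-1)^{j+1}A^{4j-6n+2}$ (resp.\ $A^{2-2n}+\sum_{j=0}^{n-2}(-1)^{j+1}A^{4j-6n+2}$) and then reading off the coefficient of $x^k$ after the substitution $A=e^x$. Your geometric-sum verification simply fills in the ``some algebra'' the paper leaves implicit, and your computation checks out.
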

\begin{proof} The first formula follows from \ref{specpoly} and some algebra. In this case, we have:
\[
f_{\Phi(n)}(A)=A^{-2n} + \sum_{k=0}^{n-1}(-1)^{k+1} A^{4k-6n+2}
\] 
The second formula follows similarly, where:
\[
f_{\Phi(n)}=A^{2-2n}+\sum_{k=0}^{n-2}(-1)^{k+1}A^{4k-6n+2}
\]
\end{proof}
\begin{corollary} For $k$ odd, $n \ge 0$, $v_{k}(\Phi(n))$ is an increasing nonnegative function of $n$.  For $k$ even, $n \ge 0$, $v_k(\Phi(n))$ is a decreasing nonpositive function of $n$. 
\end{corollary}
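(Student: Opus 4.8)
The plan is to deduce everything from the monotonicity of the sequence $n\mapsto v_k(\Phi(n))$. Since $f_{\Phi(0)}(A)=f_{\Phi(1)}(A)=1$ (proof of Lemma~\ref{specpoly}), we have $v_k(\Phi(0))=v_k(\Phi(1))=0$ for every $k\ge 1$; so it is enough to show that for all $n\ge 0$ the finite difference $v_k(\Phi(n+1))-v_k(\Phi(n))$ is $\ge 0$ when $k$ is odd and $\le 0$ when $k$ is even. Telescoping from $n=0$ then gives the monotonicity together with $v_k(\Phi(n))\ge 0$ for $k$ odd and $v_k(\Phi(n))\le 0$ for $k\ge 2$ even (the case $k=0$ being genuinely excluded, since $v_0(\Phi(n))\equiv 1$).

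The first step is to put the difference in a usable form. Using the formulas for $f_{\Phi(n)}(A)$ of Lemma~\ref{specpoly} in their single-term Laurent form, interpreted so that empty sums are $0$ (hence also recording $f_{\Phi(0)}=f_{\Phi(1)}=1$), a routine Laurent-polynomial computation yields, for every $n\ge 0$,
\[
f_{\Phi(n+1)}(A)-f_{\Phi(n)}(A)=\sum_{j=0}^{M-1}(-1)^{j+1}\left(A^{4j-6n-4}-A^{4j-6n+2}\right),
\]
where $M=n$ if $n$ is even and $M=n+1$ if $n$ is odd. The point that makes the argument work is that $M$ is even in both parities. Substituting $A=e^x$ and reading off the coefficient of $x^k$ gives
\[
k!\left(v_k(\Phi(n+1))-v_k(\Phi(n))\right)=\sum_{j=0}^{M-1}(-1)^{j}b_j,\qquad b_j:=(4j-6n+2)^k-(4j-6n-4)^k .
\]

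The second step is to analyze $(b_j)$. Writing $c_j=4j-6n-4$, one has $b_j=\beta(c_j)$ with $\beta(t)=(t+6)^k-t^k$, and as $j$ runs over $0,\ldots,M-1$ the numbers $c_j$ form an arithmetic progression of common difference $4$ whose top entry (at $j=M-1$) equals $-2n-8$ if $n$ is even and $-2n-4$ if $n$ is odd; in particular every $c_j$ lies in $(-\infty,-3)$, where $|t+6|<|t|$. From $\beta'(t)=k\left((t+6)^{k-1}-t^{k-1}\right)$ one checks, by cases on the sign of $t+6$, that $\beta$ is nonincreasing on $(-\infty,-3)$ when $k$ is odd (strictly for $k\ge 3$) and nondecreasing there when $k$ is even. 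Hence for $k$ odd the $b_j$ are positive and nonincreasing in $j$, whereas for $k$ even the $b_j$ are negative and nondecreasing in $j$. Since $M$ is even, pairing off consecutive terms gives $\sum_{j=0}^{M-1}(-1)^j b_j=\sum_i(b_{2i}-b_{2i+1})$, and each summand is $\ge 0$ in the odd case and $\le 0$ in the even case. This is exactly the sign of the finite difference required above, so the corollary follows.

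The only genuinely delicate point is organizing the two parity cases so that the alternating sum always has an even number of terms; once $f_{\Phi(n+1)}-f_{\Phi(n)}$ is written in the single displayed form above, the remainder is just the elementary monotonicity of $t\mapsto (t+6)^k-t^k$ on $(-\infty,-3)$ and the standard bound for an alternating sum of a monotone sequence.
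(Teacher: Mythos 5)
Your proof is correct, and it rests on the same raw ingredients as the paper's one-sentence argument --- the explicit Laurent formulas of the preceding corollary plus a convexity property of the power function --- but it packages them differently. The paper reads the sign and monotonicity directly off the alternating sum in the explicit formula for a single $v_k(\Phi(n))$, citing convexity of $x^k$ for $x\ge 0$ and the comparison of secant-line slopes between successive points (Royden's lemma). You instead reduce everything to the sign of the consecutive difference $v_k(\Phi(n+1))-v_k(\Phi(n))$: the Laurent identity $f_{\Phi(n+1)}(A)-f_{\Phi(n)}(A)=\sum_{j=0}^{M-1}(-1)^{j+1}\bigl(A^{4j-6n-4}-A^{4j-6n+2}\bigr)$ checks out in both parities (including the cancellation of the boundary terms $A^{-2n-2}-A^{2-2n}$ when $n$ is odd), $M$ is indeed even in both cases, and the empty-sum convention legitimately covers $n=0,1$ since $f_{\Phi(0)}=f_{\Phi(1)}=1$ is recorded in the proof of the lemma giving the closed forms. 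Your monotonicity of $\beta(t)=(t+6)^k-t^k$ on $(-\infty,-3)$ is exactly the secant-slope comparison for $x^k$ that the paper invokes, so the two arguments are cognate; what your version buys is explicit bookkeeping --- the even number of summands, the alternating-sum-of-a-monotone-sequence bound, and the telescoping from $v_k(\Phi(0))=v_k(\Phi(1))=0$ giving nonnegativity/nonpositivity --- and it also makes visible that ``increasing'' and ``decreasing'' must be read weakly (the sequence is constant from $n=0$ to $n=1$, and strict monotonicity only starts at $n\ge 1$), a point the paper's terse proof leaves implicit.
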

\begin{proof} This follows from the previous corollary, the fact that $x^k$ is convex for $x \ge 0$, and the relationship between the secant lines of $x^k$ between successive points in $\mathbb{R}$ (see Royden\cite{MR1013117}, pg. 111 Lemma 16).
\end{proof}
\bibliographystyle{plain}
\bibliography{bib_vass}
\end{document}